\newtheorem  {theorem}                  {Theorem}
\newtheorem* {theorem*}                   {Theorem}
\newtheorem {lemma}[theorem] {Lemma}
\newtheorem {prop}[theorem]      {Proposition}
\newtheorem* {prop*}     {Proposition}
\newtheorem {corollary}[theorem]      {Corollary}
\theoremstyle{definition}
\newtheorem {defi}[theorem] {Definition}
\newtheorem {Remark} [theorem]         {Remark}
\newtheorem* {Example*}    {Example}
\newcommand{\pp}[2]{\frac{\partial#1}{\partial#2}}
\newcommand{\norm}[1]{\|#1\|}
\title{Stability is not open or generic in symplectic four-manifolds}
\author{Robert Cardona\footnote{Robert Cardona acknowledges financial support from the Margarita Salas postdoctoral
contract financed by the European Union-NextGenerationEU, the Universitat Politècnica de Catalunya and the Instituto de Ciencias Matemáticas. This work was partially supported by the AEI grant PID2019-103849GB-I00 / AEI / 10.13039/501100011033, AGAUR grant 2021 SGR 00603 and the project Computational,
dynamical and geometrical complexity in fluid dynamics -  AYUDAS FUNDACIÓN
BBVA A PROYECTOS INVESTIGACIÓN CIENTÍFICA 2021.}}
\date{}
\begin{document}
\maketitle

\begin{abstract}
Given an embedded stable hypersurface in a four-dimensional symplectic manifold, we prove that it is stable isotopic to a $C^0$-close stable hypersurface with the following property: $C^\infty$-nearby hypersurfaces are generically unstable. This shows that the stability property is neither open nor generic, independently of the isotopy class of hypersurfaces and ambient symplectic manifold. The proof combines tools from stable Hamiltonian topology with techniques in three-dimensional dynamics such as partial sections, integrability and KAM theory. On our way, we establish non-density properties of Reeb-like flows and a generic non-integrability theorem for cohomologous Hamiltonian structures and volume-preserving fields in dimension three.
\end{abstract}

\tableofcontents

\section{Introduction}

The study of embedded hypersurfaces is a rich topic in symplectic geometry, further motivated by its connection to Hamiltonian dynamics. If $M$ denotes an embedded hypersurface in a symplectic manifold $(W,\Omega)$, the symplectic form restricted to $M$ defines a closed maximally non-degenerate two-form whose kernel integrates to what is known as the characteristic foliation of $M$. Any section of this line bundle defines a vector field that is parallel to the Hamiltonian vector field defined by any function having $M$ as a regular level set and which preserves some volume form in $M$. Among hypersurfaces in symplectic manifolds, those of contact type are the most studied in the literature. When $W$ is four-dimensional, a lot is known about the dynamics of their characteristic foliation, which is spanned by the Reeb vector field of a contact form. These hypersurfaces are a particular case of the more general class of stable hypersurfaces, introduced by Hofer and Zehnder in 1994 \cite{HZ}. A possible characterization of stable hypersurfaces is that they admit a distinguished symplectic tubular neighborhood foliated by hypersurfaces whose characteristic foliations are all isomorphic. If $\omega$ is the induced Hamiltonian structure (the pullback of the ambient symplectic form) on $M$, the hypersurface is stable if and only if the Hamiltonian structure is ``stabilizable". This means that there exists a one-form $\lambda$ such that the characteristic foliation evaluates positively on $\lambda$, and is contained in the kernel of $d\lambda$. The pair $(\lambda,\omega)$ is then called a stable Hamiltonian structure and it defines a Reeb vector field spanning the characteristic foliation of $M$. Notice that if $\alpha$ is a contact form, then $(\alpha,d\alpha)$ is a stable Hamiltonian structure, so these last provide a generalization of contact forms and their Reeb fields.

The existence of the distinguished tubular neighborhood makes stable hypersurfaces suitable as boundary conditions when studying pseudoholomorphic curves in symplectic cobordisms, as developed in symplectic field theory \cite{EGH, BEHWZ}. They appear in other works in symplectic topology, see for example \cite{EKP, NW}. From a dynamical perspective it can be shown that, except for some three-manifolds, the characteristic foliation always admits periodic orbits \cite{HT, Re, CR}.  Since their introduction, the dynamical properties of Reeb vector fields of stable Hamiltonian structures have been studied in different contexts, see \cite{CFP1, MP, CV, CV2, CR} and references therein. 

\subsection{Stability is not open or generic.}
An in-depth study of stable Hamiltonian structures in dimension three was undertaken by Cieliebak and Volkov in their seminal work \cite{CV, CV0}.  It is mentioned there that it is easy to show that stability is not a closed condition, but a harder question is whether stability is open, for instance in the $C^\infty$-topology, which is a very natural topological property. Indeed, some subclasses of stable hypersurfaces are defined by $C^1$-open conditions, such as contact hypersurfaces or those hypersurfaces whose characteristic foliation admits a global cross-section. When studying pseudoholomorphic curves in symplectic cobordisms, the usual requirement is that the boundary hypersurface is not only stable, but also non-degenerate. If stability was open, then any stable boundary hypersurface could be perturbed to be non-degenerate and stable simultaneously. It remains unknown whether a stable hypersurface can be approximated by non-degenerate stable hypersurfaces. From a dynamical point of view, the potential openness of the stability condition is meaningful since anything that can be said about their dynamics would apply to an open set of energy values of a Hamiltonian system with a stable energy level set. A weaker statement could hold, for instance, that hypersurfaces in a neighborhood of a stable hypersurface are generically stable. Unfortunately, it was shown by Cieliebak, Frauenfelder, and Paternain \cite{CFP} that there are at least some examples of six-dimensional symplectic manifolds with boundary that admit stable hypersurfaces whose stability is lost under a generic $C^\infty$-perturbation. This implies that stability is not, at least in full generality, an open or a generic property. Their example relies on the existence, in dimension five, of Hamiltonian structures whose kernel is spanned by an Anosov flow whose stability is lost under (generic) perturbations. This approach cannot be adapted to four dimensions: the only smooth Anosov flows in three dimensions that lie in the kernel of a stabilizable Hamiltonian structure are either suspensions or Reeb fields of contact forms. These two properties make any $C^1$-close Hamiltonian structure necessarily stable and hence any stable hypersurface with induced Anosov dynamics will have a $C^1$-neighborhood of stable hypersurfaces. 

In dimension four the openness question, often attributed to G. Paternain, had not been answered yet. The main contribution of this work shows that the stability condition is not open or generic (or generic in some open set of hypersurfaces) in four-dimensional symplectic manifolds. On a symplectic manifold $(W,\Omega)$, we denote by $\mathcal{HS}(W)$ the space of closed hypersurfaces equipped with the $C^\infty$-topology.

\begin{theorem}\label{thm:main}
Let $M$ be a closed embedded stable hypersurface in a symplectic four-manifold $(W,\Omega)$. Then $M$ is stable isotopic to an arbitrarily $C^0$-close stable hypersurface $\tilde M$ with the following property: it admits a $C^\infty$-neighborhood in $\mathcal{HS}(W)$ containing a residual set of unstable hypersurfaces.
\end{theorem}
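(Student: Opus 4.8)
\medskip
\noindent\emph{Strategy.}
The plan is to reduce the statement to a genericity question about Hamiltonian structures on the three-manifold $M$ itself, to engineer a special model stable hypersurface whose characteristic dynamics contains an \emph{integrable twist region}, and then to combine a Baire--category (generic non-integrability) argument with twist-map perturbation theory. First I would use the distinguished symplectic tubular neighborhood of a stable hypersurface: a neighborhood of $M$ in $(W,\Omega)$ is symplectomorphic to a neighborhood of $M\times\{0\}$ in a symplectization, so that hypersurfaces $C^\infty$-close to $M$ are graphs over $M$ carrying induced Hamiltonian structures (closed, maximal-rank two-forms) that are $C^\infty$-close to $\omega=\Omega|_M$ and all cohomologous to $\omega$ in $H^2(M;\R)$; moreover a $C^0$-small stable isotopy of $M$ is the same datum as a $C^0$-small path of stabilizable Hamiltonian structures, realized by hypersurfaces. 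The key dichotomy is this: the hypersurface is stable iff its Hamiltonian structure $\omega'$ is stabilizable, i.e. there is $\lambda'$ positive on the characteristic foliation with $\ker\omega'\subset\ker d\lambda'$; in dimension three $d\lambda'=g\,\omega'$, and $d(d\lambda')=0$ forces $g$ to be a first integral of the characteristic foliation with $g\,\omega'$ exact. Hence if $[\omega']\neq 0$ and the characteristic foliation admits no nonconstant first integral, then $g$ is a constant and $[g\,\omega']=0$ forces $g\equiv 0$, i.e. a \emph{closed} stabilizing one-form; but a closed one-form positive on the characteristic foliation forbids contractible periodic orbits (such an orbit bounds a disk against which the closed positive form integrates both to $0$ and to something positive). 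So, in the case $[\omega]\neq 0$, it suffices to produce $\tilde M$ near $M$ such that a residual set of cohomologous $C^\infty$-nearby Hamiltonian structures simultaneously (i) have no nonconstant first integral and (ii) possess a contractible periodic orbit; the case $[\omega]=0$ (e.g. $W$ exact) requires in addition ruling out that $\omega'$ is stabilized by a contact form, which I would handle by arranging the persistent contractible orbits to bound disks of both signs of $\omega'$-area.

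Next, the construction of $\tilde M$. Using tools from stable Hamiltonian topology together with partial-section techniques, I would modify $(\lambda,\omega)$ by a $C^0$-small (but $C^\infty$-large) stable isotopy, supported in a small ball of $M$, to a stable Hamiltonian structure $(\tilde\lambda,\tilde\omega)$ whose Reeb field $\tilde R$ has an invariant region $U\cong T^2\times(-1,1)$ on which it is an integrable twist: on $T^2\times\{z\}$ it is the linear flow of slope $z$, with the meridian slope lying in the interior of the rotation interval, so that $U$ contains an invariant two-torus entirely filled with periodic orbits that are contractible in $M$, flanked by a one-parameter family of quasi-periodic invariant tori. The local model is the stable Hamiltonian structure $\bigl(d\phi,\; dz\wedge(d\theta-z\,d\phi)\bigr)$ in angle--angle--action coordinates, grafted into $(\lambda,\omega)$ by a flow-box / plug argument; when $M$ is not exact one also checks that the cohomology class $[\tilde\omega]=[\omega]\neq 0$ is preserved, and when $M$ is exact one installs two linked such plugs so that the disk-area argument above applies.

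Then come the two genericity inputs. The first is a \emph{generic non-integrability theorem}: in the space of closed maximal-rank two-forms on $M$ cohomologous to $\tilde\omega$ (equivalently, among nearby hypersurfaces), a residual set have characteristic foliation with no nonconstant smooth first integral. This is a Baire argument: for each rational homology class and each potential integrable slot, the set of $\omega'$ for which the relevant invariant surface is destroyed is shown to be open and dense, the technical heart being a fragmentation lemma that one can $C^\infty$-perturb $\omega'$ within its cohomology class — replacing $\omega'$ by $\omega'+d\mu$ with $\mu$ small and compactly supported — so as to break any prescribed invariant surface; KAM theory enters to control the surviving quasi-periodic tori in the twist region, showing they form a Cantor family rather than reassembling into a foliation, and to localize the resonant zones. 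The second input is the persistence of a contractible periodic orbit: a Poincaré--Birkhoff / subharmonic-Melnikov analysis of the return map in $U$ shows that a generic $C^\infty$-small perturbation of $\tilde R$ breaks the meridian-slope resonant torus into finitely many nondegenerate periodic orbits, at least one elliptic hence robust under further perturbation, all of them $C^0$-close to meridian circles and therefore contractible in $M$.

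Finally one intersects the residual set from the non-integrability theorem with the (residual, indeed locally robust) set of hypersurfaces carrying a contractible characteristic orbit; on the resulting residual subset of a $C^\infty$-neighborhood of $\tilde M$ the hypersurfaces are unstable by the dichotomy of the first paragraph, and since $\tilde M$ is $C^0$-close and stable isotopic to $M$ the theorem follows. I expect the main obstacle to be the generic non-integrability theorem: both the fragmentation lemma for closed two-forms constrained to a fixed cohomology class \emph{and} to maximal rank, and the compatibility requirement that the perturbations killing first integrals not destroy the robust elliptic contractible orbit provided by the twist plug; a secondary difficulty is carrying out the initial $C^0$-small stable isotopy that installs the twist region inside a genuine stable Hamiltonian structure on the given closed three-manifold while staying in the correct cohomology class.
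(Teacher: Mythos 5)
Your reduction is logically sound and genuinely different in its obstruction scheme from the paper's: you kill the ``closed stabilizing form'' case ($g\equiv 0$) with a contractible periodic orbit via Stokes, and the ``contact type'' case ($g$ a nonzero constant, only relevant when $[\omega']=0$) with two contractible orbits bounding disks of opposite $\omega'$-area, whereas the paper kills these cases with a positive and a negative $\partial$-strong partial section (Lemma \ref{lem:obs}) together with Schwartzman's cross-section criterion (Proposition \ref{prop:obsGCS}). The dichotomy $d\lambda'=g\omega'$, $g$ a first integral, and the identification of nearby hypersurfaces with nearby cohomologous Hamiltonian structures are exactly as in the paper, and one of your two genericity inputs (generic non-integrability within a fixed cohomology class) is precisely the paper's Theorem \ref{thm:residualint}. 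A small point in your favor that you misjudge as a difficulty: no ``compatibility'' between the non-integrability perturbations and the contractible orbit is needed, since a nondegenerate orbit persists on an \emph{open} $C^\infty$-neighborhood and you simply intersect that neighborhood with the residual non-integrable set.

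The genuine gaps are in the two steps you leave as assertions. First, the plug construction: you must realize, \emph{inside} $(W,\Omega)$, a $C^0$-small \emph{stable} isotopy of $M$ to a stable $\tilde M$ whose characteristic foliation has an invariant torus filled by orbits that are contractible in $M$ (and, when $[\omega]=0$, two such configurations with opposite-signed disk areas), and then break these resonant tori into \emph{nondegenerate} contractible orbits while keeping $\tilde M$ stable and the isotopy stable. This is where essentially all of the paper's work lies (Proposition \ref{prop:SHSht}, Theorem \ref{thm:SHSdelta}, and the reduction Theorem \ref{thm:addintS}, which itself needs a periodic orbit via the closing lemma, a return-map normalization by generating functions, and the Cieliebak--Volkov extension results to produce stabilizing forms, including the trick of first making $\lambda$ closed near the binding orbits so the nondegeneracy perturbation stays stabilized). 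Moreover, contractibility of the resonant orbits is an \emph{extra} topological demand the paper never needs: in the paper's construction the homology class of the resonant orbits is dictated by the direction of the stabilizing form on the integrable region, and for a generic $T^2$-invariant region (e.g.\ incompressible tori) no essential slope is contractible in $M$; so you must force the twist region to sit inside a solid torus (say around a created periodic orbit) with the resonance exactly at the meridian, and also arrange the sign of the $\omega'$-flux through the meridian disks, none of which follows from your model $(d\phi,\,dz\wedge(d\theta-z\,d\phi))$ being ``grafted by a flow-box argument''; note also that in that model the stabilizing form $d\phi$ is closed and positive on the would-be meridian orbits, so it cannot extend as written over the compressing solid torus, and a genuine extension argument is required. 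Second, your sketch of generic non-integrability (``fragmentation lemma, open and dense destruction of invariant slots'') does not amount to a proof: the actual mechanism in the paper is a countable-basis argument in which KAM (via a uniform Wronskian/twist condition) forces a generic integrable field to have non-constant slope, hence tori of degenerate periodic orbits, which is then excluded by a Robinson-type residual nondegeneracy statement proved \emph{within the fixed cohomology class} by a symplectization trick; without something of this structure your Baire argument does not cover all possible first integrals. You correctly flag this as the main obstacle, but as it stands both of these steps are missing rather than merely technical.
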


Our theorem shows that even if we try to restrict to a particular symplectic manifold or a particular isotopy class of stable hypersurfaces (see Section \ref{ss:SHS} for a formal definition of stable isotopy, which can loosely be defined as an isotopy of stable hypersurfaces), one cannot achieve that stability is open or generic in some neighborhood of that isotopy class. Our statement applies indeed to the standard symplectic space $(\mathbb{R}^4,\omega_{std})$ for instance by choosing any convex hypersurface like the unit sphere, which is contact and hence stable. The methods in this work probably allow replacing $C^\infty$-generic by $C^k$-generic with $k\geq 5$ and considering hypersurfaces that are only $C^k$. This minimal regularity is needed though, to apply KAM arguments. Theorem \ref{thm:main} is sharp in the following sense: the fact that the isotopy is arbitrarily $C^0$-small cannot be replaced by $C^1$-smallness. If we start with a contact hypersurface then any $C^1$-close hypersurface is of contact type as well, so it admits a neighborhood in $\mathcal{HS}(W)$ without unstable hypersurfaces. This is also the case for hypersurfaces whose characteristic foliation admits global cross-section: this property implies stability and is robust under $C^1$-perturbations. It might be that these are the only two cases in which a stable hypersurface is stable after perturbation: there is no known example of robustly stable hypersurface that is not of contact type or with a global cross section.

 The steps of the proof of Theorem \ref{thm:main}, which we will summarize at the end of this introduction, yield statements of independent interest. Besides Theorem \ref{thm:Reeb} below, we show as well that on closed three-manifolds the kernel of a generic Hamiltonian structure admits no smooth first integral, see Corollary \ref{cor:exnonint}. This is proven in the language of vector fields preserving a volume form $\mu$ and whose dual two-form lies in a given cohomology class (Theorem \ref{thm:residualint}), and holds even if those flows admit zeroes. This is relevant to hydrodynamics and the helicity invariant \cite{EPST, KPSY}, as discussed after Corollary \ref{cor:exnonint}.

\subsection{Non-density of Reeb-like flows.} Another statement that we can establish when building towards Theorem \ref{thm:main} concerns Reeb-like flows on three-manifolds. Recently, several generic properties of Reeb fields defined by contact forms on three-manifolds have been established, such as the generic existence of dense and equidistributed periodic orbits \cite{Ir1, Ir2}, or the generic existence of Birkhoff sections by Colin-Dehornoy-Hryniewicz-Rechtman and Contreras-Mazzucchelli \cite{CDHR, CM}. These results involve important theories such as embedded contact homology \cite{H} and the existence of broken book decompositions  \cite{CDR}. The generic existence of Birkhoff sections and other results on the number of periodic orbits were generalized for Reeb fields defined by stable Hamiltonian structures in \cite{CR}, thus covering a larger set of non-vanishing volume-preserving vector fields. The following question then naturally arises: are flows conformal to a Reeb field (or ``Reeb-like flows") dense in the $C^k$-topology for some $k$ among either volume-preserving flows or flows conformal to the Reeb field of a stable Hamiltonian structure (or ``stable-like flows")? To state our result, let $M$ be closed three-manifold and denote by $\mathcal{R}(M)$ the set of Reeb-like flows, by $\mathcal{SR}(M)$ the set of flows conformal to Reeb fields of stable Hamiltonian structures, and by $\mathfrak{X}_{vol}(M)$ the set of volume-preserving flows in $M$. We denote as well by $\mathcal{R}_\mu(M)$, $\mathcal{SR}_\mu(M)$ and $\mathfrak{X}_{\mu}(M)$ the subsets of $\mathcal{R}(M)$, $\mathcal{SR}(M)$ and $\mathfrak{X}_{vol}(M)$ preserving a fixed volume form $\mu$ in $M$. In this language, Theorem \ref{thm:main} implies that $\mathcal{SR}_\mu(M)$ is not open in $\mathfrak{X}_{\mu}(M)$, see Corollary \ref{cor:notopen}.

In some concrete manifolds, it is easy to construct particular examples of vector fields in $\mathcal{SR}(M)$ which cannot be $C^0$-approximated by Reeb-like flows: for example volume-preserving vector fields admitting a global cross-section are stable-like, but cannot be Reeb-like. There exist as well, in some manifolds, volume-preserving Anosov flows that are not suspensions nor orbit equivalent to contact Anosov flows, like the class of non $\mathbb{R}$-covered Anosov flows (see \cite{BI} for a method to construct such examples). Those cannot be $C^1$-approximated by flows in $\mathcal{R}(M)$ because of their structural stability. The general case (including the three-sphere) is not covered by these examples, which only exist in restricted classes of three-manifolds. Our result establishes that this density does not occur on any manifold and homotopy class of non-vanishing vector fields.

\begin{theorem}\label{thm:Reeb}
Let $M$ be a closed three-manifold. In any homotopy class of non-vanishing vector fields there exists $Y\in \mathcal{SR}(M)$ and a $C^1$-neighborhood $\mathcal{V}$ of $Y$ in $\mathfrak{X}(M)$ such that $\mathcal{V}\cap \mathcal{R}(M)=\emptyset$.
\end{theorem}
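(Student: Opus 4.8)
The plan is to produce $Y$ by a local surgery on an arbitrary representative of the homotopy class, inserting inside a flow box an explicit integrable ``plug'' that is stable-like but whose stable dynamics is rigid enough to persist under $C^1$-perturbations while being incompatible with the geometry forced by a contact form. \textbf{Construction.} Fix a non-vanishing $X_0$ representing the given homotopy class and an embedded solid torus $V\simeq D^2\times S^1$ inside a flow box of $X_0$. On $V$ I would take $Y$ to be the Reeb field of an explicit \emph{non-contact} stable Hamiltonian structure $(\lambda,\omega)$ that is completely integrable: $V$ is foliated by invariant $2$-tori $T_s$, $s\in(0,1]$, with the Reeb flow linear of rotation vector $\rho(s)\in H_1(T^2;\R)$ on each $T_s$, chosen so that (i) $\rho(s)$ is Diophantine for $s$ in a full-measure set, and (ii) the projective directions of the $\rho(s)$ sweep through more than a half-turn as $s$ runs over a subinterval, while near the core ($s\to 0$) the model is a small monotone-twist neighbourhood of a periodic orbit so that $Y$ glues smoothly to $X_0$. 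One must check that such a $(\lambda,\omega)$ exists on $V$ with the prescribed boundary behaviour, is stabilizable but nowhere of contact type (here $\omega$ degenerates along the $T_s$), that the glued field $Y$ is stable-like on all of $M$ — extending $(\lambda,\omega)$ off $V$ in the spirit of the constructions behind Theorem \ref{thm:main} — and that the insertion does not alter the homotopy class (if necessary compensating with a mirrored plug, which carries the same obstruction).

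\textbf{KAM persistence.} Since $Y|_V$ is a completely integrable system with twisting, hence nondegenerate, frequency map and sufficient regularity (this is where the $C^k$, $k\ge 5$, hypothesis enters), KAM theory yields: every $C^1$-small perturbation $X'$ of $Y$ still carries inside $V$ a positive-measure Cantor family of invariant $2$-tori $T'_s$, $s\in K$, each with a Diophantine linear flow whose rotation vector $\rho'(s)$ is $C^0$-close to $\rho(s)$. In particular the projective directions of the $\rho'(s)$, $s\in K$, still sweep through more than a half-turn.

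\textbf{The contact obstruction.} Suppose, towards a contradiction, that some $X'$ in a fixed small $C^1$-neighbourhood of $Y$ were Reeb-like, say $X'=fR_\alpha$ with $f>0$ and $\alpha$ a contact form. On any invariant torus $T$ of $R_\alpha$ on which $R_\alpha$ is non-vanishing, $\iota_{R_\alpha}d\alpha=0$ forces $d\alpha|_T\equiv 0$, so $\alpha|_T$ is a closed $1$-form, $T$ is a pre-Lagrangian torus for $\ker\alpha$, and $\alpha(R_\alpha)>0$ forces $c(T):=[\alpha|_T]\in H^1(T;\R)$ into the open half-plane of classes pairing positively with the asymptotic cycle of the flow on $T$. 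Applying this to the family $\{T'_s\}_{s\in K}$, together with the structure theory of contact structures on $T^2\times I$ — between pre-Lagrangian tori the slope of the contact structure rotates monotonically, in the direction fixed once and for all by the co-orientation defined by $X'$ (the continuum version of this being exactly the inequality $\alpha\wedge d\alpha>0$ written out in coordinates $(s,\theta_1,\theta_2)$) — the classes $c(T'_s)$ must rotate monotonically in $s$. But they are at the same time confined to the half-planes above, whose axes $\rho'(s)$ sweep through more than a half-turn in the \emph{opposite} sense by the previous step; this forces $c(T'_s)$ to rotate the other way, a contradiction. Hence a suitable $C^1$-neighbourhood $\mathcal V$ of $Y$ satisfies $\mathcal V\cap\mathcal R(M)=\emptyset$.

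\textbf{The main difficulty.} The crux is twofold. First, realising the plug as part of a \emph{genuine} stable Hamiltonian structure in an \emph{arbitrary} homotopy class while keeping the global field stable-like; this is where the machinery underlying Theorem \ref{thm:main} is really used. Second, and most delicate, making the rigidity argument go through with only a Cantor set of persistent tori rather than a continuum: one must verify that the monotone-rotation constraint survives across the gaps of $K$ (it does, since the regions between consecutive persistent tori are still contact $T^2\times I$'s and no invariant tori of the flow are needed inside the gaps), and that the $C^0$-closeness in the KAM step preserves the ``more than a half-turn'' property. Keeping the orientation conventions coherent — so that the forced rotation of $c(T'_s)$ genuinely opposes, rather than matches, the designed winding, and so that Reeb fields of either sign are excluded — is what pins down the sign conventions in the construction of the plug.
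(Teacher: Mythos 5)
There are two genuine gaps, and they concern precisely the two pillars of your argument. First, the persistence mechanism: the theorem asks for a \emph{$C^1$}-neighborhood, but KAM theory does not survive $C^1$-small perturbations. The twist/KAM theorems you invoke (e.g.\ the one quoted in the paper as Theorem \ref{thm:KAM}, following Khesin--Kuksin--Peralta-Salas and Moser) require smallness in a high $C^k$ (in the paper's formulation, $C^\infty$) norm; a perturbation that is small in $C^1$ but large in $C^3$ can destroy every invariant torus, Diophantine and twisting or not. So your Cantor family $\{T'_s\}$ simply need not exist for the vector fields $X'$ you must exclude, and your scheme can at best produce a $C^k$-neighborhood for large $k$. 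The paper's proof avoids this by replacing invariant tori with a different robust object: a positive and a negative $\partial$-strong partial section with non-degenerate binding orbits (produced via Proposition \ref{prop:SHSht} and Theorem \ref{thm:SHSdelta}), whose persistence under $C^1$-perturbations is the content of Proposition \ref{prop:robpartial}; the obstruction to being Reeb-like is then the Stokes argument of Lemma \ref{lem:obs}, not a KAM rigidity statement.

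Second, the ``contact obstruction'' for a single plug is not an obstruction. The Reeb-like condition only asks for a contact form $\alpha$ with $\alpha(X')>0$ and $\iota_{X'}d\alpha=0$; it does not fix the sign of $\alpha\wedge d\alpha$ relative to the ambient orientation, and the co-orientation determined by $X'$ fixes $\pm\alpha$ but not that sign. Consequently the sense in which the classes $c(T'_s)=[\alpha|_{T'_s}]$ are forced to rotate is fixed by the sign of $\alpha\wedge d\alpha$, which is at the adversary's disposal, so a family of invariant tori whose rotation directions sweep more than a half-turn (indeed arbitrarily many full turns) in \emph{one} sense is perfectly compatible with being Reeb-like: in the $T^2$-invariant model $\alpha=g_1(r)\,dx+g_2(r)\,dy$ with $(g_1,g_2)$ winding around the origin (a Lutz-tube--type solid torus, exactly your plug geometry with tori collapsing onto a core orbit), the Reeb direction sweeps full turns and the flow is literally a Reeb flow. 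This is why the paper needs a positive \emph{and} a negative (rational) half-twisting region (Lemma \ref{lem:htw} and Corollary \ref{cor:refTw}), matching the two cases in the proof of Lemma \ref{lem:obs}: each sign of $\alpha\wedge d\alpha$ is killed by one of the two sections. Your ``mirrored plug'' is thus not optional homotopy bookkeeping but the essential second half of the obstruction, and with it your argument would in substance reduce to the paper's. Relatedly, gluing such a plug into an \emph{arbitrary} non-vanishing $X_0$ while keeping the glued field stable-like is itself the hard global step you defer; the paper sidesteps it by starting from a contact form in the prescribed homotopy class (Lutz--Martinet), deforming it via \cite[Theorem 3.37]{CV} to a stable Hamiltonian structure with non-constant $d\lambda/\omega$, and then working with embeddings into its symplectization.
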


In particular $\mathcal{R}(M)$ is not $C^1$-dense in $\mathfrak{X}_{vol}(M)$ or in $\mathcal{SR}(M)$. This implies that the inclusions $\mathcal{R}_\mu(M) \subset \mathcal{SR}_\mu(M)$ and $\mathcal{R}_\mu(M) \subset \mathfrak{X}_{\mu}(M)$ are not $C^1$-dense neither, see Corollary \ref{cor:fixmu}. We point out that this result is not implied, for example in the three-sphere, by the existence of Hamiltonian structures that cannot be approximated by contact type Hamiltonian structures, see e.g. \cite{Cie} (see Remark \ref{rem:ctype}). It is not known, to the author's knowledge, whether Reeb-like vector fields are $C^0$-dense among volume-preserving vector fields in the three-sphere. There are no known results in the three-sphere or that hold on every three-manifold about the density or non-density (in any topology) of $\mathcal{SR}_\mu(M)$ in $\mathfrak{X}_\mu(M)$.

\subsection{Outline of the proof.} Let us sketch the main ideas of the proof of Theorem \ref{thm:main}. The Reeb flow of a stable Hamiltonian structure $(\lambda,\omega)$ in dimension three necessarily satisfies at least one of the following three properties: it is Reeb-like, it admits a global cross-section or it admits a non-trivial first integral, see Section \ref{ss:SHS}. We will show that a given embedded stable hypersurface $\iota:M\rightarrow (W,\Omega)$ is stable isotopic to another one such that a generic perturbation yields a hypersurface whose characteristic foliation does not satisfy any of these three properties and hence it is not stable. 

For the moment, assume that we have constructed an embedded stable hypersurface $\tilde M=e(M)$ where
$$ e : M \longrightarrow (W,\Omega)$$
is $C^0$-close and stable isotopic to $\iota$, and that has the following property. A non-vanishing section (and hence any) of the characteristic line bundle of the induced Hamiltonian structure $\tilde \omega=e^*\Omega$ satisfies that any $C^\infty$-perturbation of that vector field is not Reeb-like and does not admit a global cross-section. Then any stable hypersurface in some $C^\infty$-neighborhood of $\tilde M$ necessarily admits a smooth first integral. If we fix a volume form $\mu$, the hypersurface $\tilde M$ defines a vector field preserving $\mu$ by considering $\iota_X\mu=\tilde \omega$. In fact, any nearby hypersurface can be identified with a vector field $Y$ close to $X$, preserving $\mu$, and such that $[\iota_Y\mu]=[\tilde \omega]$ in De Rham cohomology, see Appendix \ref{s:app}. We establish in Theorem \ref{thm:residualint} that a generic vector field preserving $\mu$ on a given cohomology class (understood as the class of $\iota_X\mu$) admits no first integral, using KAM theory as in the works of Markus and Meyer \cite{MM}. This shows that a generic perturbation of $\tilde M$ is unstable.\\

We have then reduced the proof to constructing the stable hypersurface $\tilde M$. First, we introduce the following obstruction to being Reeb-like: if a flow admits a positive and a negative partial section, then it is not Reeb-like. This can be used to give a refinement (Corollary \ref{cor:refTw}) of a previous obstruction introduced by Cieliebak and Volkov, identifying suitable partial sections on integrable regions of a flow. We show that in a region of $M$ where $d\lambda/\omega$ is non-constant, a region that we assume for the moment to exist, it is possible to $C^0$-deform the stable hypersurface such that its characteristic foliation admits a positive and a negative partial section, see Proposition \ref{prop:SHSht}. Finally, by a new deformation into a stable hypersurface $\tilde M$, we make these partial sections $\partial$-strong (see Section \ref{ss:deltastrong} for a definition of this property) and their binding orbits non-degenerate for the characteristic foliation of $\tilde M$, this is the content of Theorem \ref{thm:SHSdelta}. The existence of such a pair of sections becomes then a $C^1$-robust property of the vector field spanning the kernel of $\tilde \omega$. This construction can be used to prove, independently of our main result, the non $C^1$-density of Reeb-like flows, Theorem \ref{thm:Reeb}. The robust partial sections introduced in the characteristic foliation are shown to obstruct as well the existence of a global cross-section (Proposition \ref{prop:obsGCS}), thus the characteristic foliation of any hypersurface near $\tilde M$ is not Reeb-like and does not admit a global cross-section. By our previous considerations, this proves Theorem \ref{thm:main} under the simplifying assumption that $d\lambda/\omega$ is not constant (Theorem \ref{thm:mainsimp}).

We end up proving in Theorem \ref{thm:addintS} that given any stable hypersurface, it is isotopic to an arbitrarily $C^1$-close stable hypersurface that satisfies the simplifying assumption, thus proving the result in full generality. This step involves proving that a hypersurface in a symplectic manifold can be $C^1$-perturbed into one whose characteristic foliation admits an integrable region surrounding a given periodic orbit (see Proposition \ref{prop:addint}). Throughout the different arguments, we show that the isotopies can always be made stable. 

\paragraph{Organization of this paper.} In Section \ref{s:pre}, we introduce some preliminary notions on volume-preserving vector fields and stable Hamiltonian structures. In Section \ref{s:Reebobs}, we introduce some obstructions from partial sections for a flow to be Reeb-like or to admit a global cross-section. In Section \ref{s:addpartial}, we show that a stable hypersurface with a $T^2$-invariant integrable region is isotopic to a $C^0$-close hypersurface admitting a positive and a negative $\partial$-strong partial section with non-degenerate binding orbits. We deduce as well Theorem \ref{thm:Reeb}. In Section \ref{s:nonint}, we establish that generic volume-preserving vector fields on a given cohomology class admit no smooth first integral. Finally, in Section \ref{s:notgeneric}, we prove the main Theorem under the simplifying assumption and then reduce to this case by a preliminary perturbation of the stable hypersurface that introduces integrable regions on its characteristic foliation. The appendix contains a lemma relating close cohomologous Hamiltonian structures and embedded hypersurfaces in symplectic manifolds.

\paragraph{Acknowledgements.} The author is grateful to Daniel Peralta-Salas for enlightening discussions regarding KAM theory, and to the anonymous referees for their careful reading, positive feedback, and valuable suggestions which improved the quality of the paper.

\section{Preliminaries}\label{s:pre}

In this section, we recall some aspects of stable hypersurfaces and of volume-preserving flows on three-manifolds.

\subsection{Stable hypersurfaces and Reeb-like properties of flows}\label{ss:SHS}
We first recall some basic notions about stable Hamiltonian structures. A closed maximally non-degenerate two-form $\omega$ on a three-dimensional manifold $M$ is called a Hamiltonian structure. It defines a one-dimensional kernel that integrates to a one-dimensional foliation called the characteristic foliation of $\omega$. If $M$ is an embedded hypersurface in a symplectic manifold $(W,\Omega)$, the characteristic foliation of $M$ is the characteristic foliation of the Hamiltonian structure obtained by restriction of the ambient symplectic form.

A contact form on a three-dimensional manifold is a one-form $\alpha$ satisfying the non-integrability condition $\alpha\wedge d\alpha\neq 0$. The plane field distribution $\xi=\ker \alpha$ is a contact structure, and any positive multiple of $\alpha$ is another contact form defining $\xi$. Each contact form $\alpha$ uniquely determines a vector field $R$ (known as the Reeb field) by the equations
\begin{equation*}
\begin{cases}
\alpha(R)=1,\\
\iota_Rd\alpha=0.
\end{cases}
\end{equation*}
The Reeb field preserves the volume form $\alpha\wedge d\alpha$, and any positive multiple of $R$ is also volume-preserving since $fR$ preserves $\frac{1}{f}\alpha\wedge d\alpha$.  A non-vanishing vector field $X$ on a three-dimensional manifold $M$ that is a positive multiple of a Reeb field is called Reeb-like, and can be defined as follows.
\begin{defi}
A vector field $X$ is \textbf{Reeb-like} if there exists a contact form $\alpha$ such that $\alpha(X)>0$ and $\iota_Xd\alpha=0$.
\end{defi}
Given a three-manifold the set of Reeb-like fields $\mathcal{R}(M)$ defines a subset of the set $\mathfrak{X}_{vol}(M)$ of vector fields in $M$ preserving some volume form. Throughout the paper, we will often identify a vector field with its flow, thus speaking indistinguishably of a Reeb-like field, of a Reeb-like flow, or even of a Reeb-like characteristic foliation. \\ 

 A generalization of contact forms are stable Hamiltonian structures.

\begin{defi}
A stable Hamiltonian structure on a three-manifold $M$ is a pair $(\lambda,\omega)$ where $\lambda$ is a one-form and $\omega$ a two-form satisfying:
\begin{itemize}
\item[-] $\lambda\wedge \omega \neq 0$,
\item[-] $d\omega=0$
\item[-] $\ker \omega \subset \ker d\lambda$.
\end{itemize}
\end{defi}
\noindent A stable Hamiltonian structure uniquely determines a Reeb field as well, defined by the equations 
\begin{equation*}
 \begin{cases}  
 \lambda(X)=1,\\
 \iota_X\omega=0,
 \end{cases}
 \end{equation*}
 which spans the characteristic foliation of $\omega$. A vector field $X$ that is the positive multiple of a Reeb field defined by a stable Hamiltonian structure also preserves some volume form (possibly different from the one preserved by the Reeb field). We will call such vector field stable-like, a property that can be characterized as follows.
 \begin{defi}
 A vector field $X$ is \textbf{stable-like} if there exists a stable Hamiltonian structure $(\lambda,\omega)$ such that $\lambda(X)>0$ and $\iota_X\omega=0$.
 \end{defi}

The set of vector fields that are stable-like, or equivalently positive multiples of Reeb fields defined by stable Hamiltonian structures, defines a subset $\mathcal{SR}(M) \subset \mathfrak{X}_{vol}(M)$ which contains Reeb-like vector fields. 

\begin{defi}
A one-form $\lambda \in \Omega^1(M)$ stabilizes $\omega$ if $(\lambda,\omega)$ defines a stable Hamiltonian structure.
\end{defi}
A Hamiltonian structure does not necessarily admit a stabilizing one-form at all, and if it does it will not be unique in general. When it does, we say that $\omega$ is stabilizable. Observe that if $\alpha$ is a contact form then $(\alpha,d\alpha)$ is a stable Hamiltonian structure, so contact type Hamiltonian structures are a particular case of stabilizable Hamiltonian structures. 
\begin{defi} 
An embedded hypersurface $\iota:M\rightarrow W$ in a symplectic manifold $(W,\Omega)$ is stable if $\omega=\iota^*\Omega$ admits a stabilizing one-form. 
\end{defi}
Given any stabilizing one-form $\lambda$ of $\omega$, we will call the pair $(\lambda,\omega)$ an induced stable Hamiltonian structure on the stable hypersurface $M$.
This is equivalent to the definition introduced by Hofer and Zehnder \cite{HZ}, which states that $M$ admits a neighborhood $M\times (-\varepsilon,\varepsilon)$ in $W$ such that the characteristic foliation of each hypersurface $M\times \{c\}$ with $c\in (-\varepsilon,\varepsilon)$ is diffeomorphic to the characteristic foliation of $M \times \{0\}$. By our discussion above, contact hypersurfaces, for which $\iota^*\Omega=d\alpha$ for some contact form $\alpha$, are a particular class of stable hypersurfaces. 
Notice that if the characteristic foliation is stable-like then the hypersurface is stable, but an analogous property does not hold in the contact setting. A hypersurface is not necessarily of contact type if the characteristic foliation is Reeb-like, as the foliation might simultaneously be in the kernel of a contact type and a non contact type Hamiltonian structure.\\

Given two stable hypersurfaces on a symplectic manifold, a notion of isotopy between them was introduced in \cite[Section 6.6]{CV}.

\begin{defi}
A homotopy of embedded hypersurfaces $M_t$ in a symplectic manifold $(W,\Omega)$, with $t\in [0,1]$, is called stable if there exists a smooth homotopy of one-forms $\lambda_t \in \Omega^1(M)$ stabilizing ${\iota_t}^*\Omega$, where $\iota_t: M \hookrightarrow W$ is the embedding such that $\iota_t(M)=M_t$.
\end{defi}
We will say in this case that $M_1$ is stable isotopic to $M_0$. A potentially weaker notion, that we will not use here, would only require that $M_t$ is stable for each $t\in[0,1]$. We can say in this case that the homotopy $M_t$ is weakly stable. It could be the case, however, that every weak stable homotopy is stable. Let us mention that one can easily glue together stable homotopies as follows. If a stable homotopy ends at $(M_1,\lambda_1)$ and another one starts at $(M_1,\lambda_1')$ with $\lambda_1\neq \lambda_1'$, then one can interpolate linearly between $\lambda_1$ and $\lambda_1'$ while keeping the homotopy of hypersurfaces fixed at $M_1$. A linear combination of stabilizing forms is again stabilizing, so this allows us to compose stable homotopies of hypersurfaces.

\subsection{Integrability and $T^2$-invariant regions.} \label{ss:inteT2}

In this work, the notion of integrability will play an important role. Let $X$ be a non-vanishing volume-preserving vector field on a three-dimensional manifold $M$.

\begin{defi}\label{def:intreg}
A domain $U\subset M$ diffeomorphic to $T^2\times I$ is called an integrable region of $X$ if the vector field is tangent to the torus fibers in $U$.
\end{defi}
Sternberg's theorem \cite{Ste} shows that on each torus fiber, the flow is smoothly orbit equivalent (although not necessarily conjugate) to a linear flow. This implies the following result that we will use throughout this work.

\begin{theorem}[Sternberg]\label{thm:Ste}
Let $X$ be a volume-preserving vector field on a three-manifold $M$ and $U$ an integrable region of $X$. Then there exist coordinates $(x,y,r)$ of $U\cong T^2\times I$ such that 
\begin{equation}\label{eq:St}
    X|_U= G(x,y,r) \left( f_1(r) \pp{}{x} + f_2(r)\pp{}{y}\right),
\end{equation}
for some smooth functions $G \in C^\infty(U)$ and $f_1,f_2 \in C^\infty(I)$.
\end{theorem}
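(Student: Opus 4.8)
The plan is to reduce the statement to Sternberg's linearization theorem by first straightening out the torus fibration and then applying the one-dimensional structure that linearization provides on each fiber. First I would use the given diffeomorphism $U\cong T^2\times I$ to introduce coordinates $(x,y,r)$ in which the torus fibers are the slices $\{r=\mathrm{const}\}$; since $X$ is tangent to these fibers by the definition of an integrable region, in these coordinates $X$ has no $\partial_r$-component, so $X|_U = a(x,y,r)\,\partial_x + b(x,y,r)\,\partial_y$ for smooth functions $a,b$. On each fiber $T^2\times\{r\}$, restrict the volume form $\mu$ together with the (now tangent, nonvanishing) vector field $X$; contracting $\mu$ with $\partial_r$ or using that $X$ preserves $\mu$, one extracts on each fiber an invariant area form and hence $X|_{T^2\times\{r\}}$ is an area-preserving nonvanishing field on the two-torus.

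The next step is to invoke Sternberg's theorem: a nonvanishing vector field on $T^2$ preserving an area form is smoothly orbit-equivalent to a linear (constant-coefficient) flow, i.e.\ there is a diffeomorphism of $T^2$ carrying its orbit foliation to that of $c_1\partial_x+c_2\partial_y$. The content I want to extract is that, after a fiberwise change of coordinates on $T^2$ depending smoothly on $r$, the \emph{direction} of $X$ on the fiber $\{r\}$ becomes a constant vector $(f_1(r),f_2(r))$, so that $X|_U = G(x,y,r)\bigl(f_1(r)\partial_x + f_2(r)\partial_y\bigr)$ with $G$ the remaining positive conformal factor. The smoothness in $r$ of the fiberwise straightening, and hence of $f_1,f_2$ and $G$, follows from the smooth dependence on parameters in Sternberg's argument (or can be arranged by a standard parametrized version of the normal-form construction).

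I expect the main obstacle to be the \textbf{smooth dependence on the transverse parameter $r$}: Sternberg's theorem as usually stated linearizes a single field on $T^2$, and one must check that the construction can be carried out uniformly in $r$ so that the resulting coordinates $(x,y,r)$ and the functions $f_1,f_2,G$ are genuinely smooth across $U$. A subtlety here is that the rotation number of the linear model $(f_1(r),f_2(r))$ will in general vary with $r$ and may pass through both rational and irrational values, so one cannot hope to conjugate (only to orbit-equivalence) and one must be careful that the normalization of $(f_1,f_2)$ is chosen consistently (e.g.\ fixing a scaling so they vary smoothly rather than jumping when the slope is rescaled). Once the parametrized linearization is in hand, collecting the conformal factor into $G$ and reading off \eqref{eq:St} is immediate.
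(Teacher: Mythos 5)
Your plan is essentially the paper's own treatment: the paper does not prove this statement but simply derives it from Sternberg's theorem that each fiberwise area-preserving nonvanishing field on $T^2$ is smoothly orbit-equivalent to a linear flow, which is exactly your reduction. The smooth dependence on the transverse parameter $r$ that you flag as the main obstacle is likewise taken for granted in the paper (it is absorbed into the citation of Sternberg), so your proposal matches the intended argument.
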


Following \cite[Section 3.2]{CV}, let us define the slope function of $X$ along $U$. We denote by $r$ the coordinate in the second factor of $T^2\times I$, and write the volume form $\mu$ preserved by $X$ as $\mu=dr\wedge \sigma_r$, where $\sigma_r$ is a smooth family of area forms in $T^2$ preserved by $X|_{T^2\times \{r\}}=X_r$. Looking at $X_r$ as a one-parameter family of vector fields in the torus, the family $\{\iota_{X_r}\sigma_r\}_{r\in I}$ is a family of non-vanishing closed one-forms in the torus. If we replace $X_r$ by another family of vector fields $\hat X_r$ defining the same oriented foliation as $X_r$ and preserving another family of area-forms $\hat \sigma_r$, then $[\iota_{\hat X_r}\hat \sigma_r]$ is a positive multiple of $[\iota_{X_r}\hat \sigma_r]$, see \cite[Lemma 3.5]{CV}. As the one-forms $\iota_{X_r}\sigma_r$ are non-vanishing, they define an element of the projectivization $PH^1(T^2,\mathbb{R})\cong S^1$ of the first de Rham cohomology group of the torus. The function that assigns to each $r\in I$ this element, understood as a point in the circle, is called the slope function
$$ k_U: I \rightarrow PH^1(T^2;\mathbb{R})\cong S^1. $$
As we have seen, the slope only depends on the oriented foliation defined by $X$, so one can speak of the slope of the foliation, the line bundle, or of a non-degenerate closed two-form having $X$ in its kernel. We will always work with the slope function in coordinates where the vector field $X$ is linear in each torus fiber, so let us give an expression of $k_U$ in that situation. If $(x, y,r)$ are coordinates in $T^2\times I$ and the vector field $X$ has the expression
$$X=f_1(r) \pp{}{x} + f_2(r)\pp{}{y}, $$
then 
$$k_U(r)=\frac{(f_2(r),-f_1(r))}{|(f_1(r),f_2(r))|},$$
and notice that this function is orthogonal to the slope vector of $X$ in the $(x,y)$ directions.\\

Let now $(\lambda,\omega)$ be a stable Hamiltonian structure on a three-manifold $M$, and denote by $X$ its Reeb field. The function $f=\frac{d\lambda}{\omega}$ is in this case a first integral of $X$. If $f$ is constant but non-zero, then the two-form $\omega$ is of contact type: it is exact and admits a primitive that is a contact form. If $f\equiv 0$, then $\lambda$ is closed and this implies by a classical argument of Tischler that $M$ fibers over the circle and $X$ admits a global cross-section. Recall that a global cross-section is an embedded closed surface in $M$ transverse to $X$ and intersecting every orbit of the flow.

 On any region $U\cong T^2\times I$ where the kernel of $\omega$ is tangent to the torus fibers, there exist \cite[Theorem 3.3]{CV} coordinates $x,y,r$ such that
\begin{equation}\label{eq:wT2}
\omega= h_1'(r)dr\wedge dx + h_2'(r) dr\wedge dy,
\end{equation}
where $h_1,h_2 \in C^\infty(I)$ and 
$$\lambda= g_1(r)dx + g_2(r)dy + g_3(r,x,y)dr,$$
for some functions $g_1,g_2\in C^\infty(I)$ and $g_3\in C^\infty(U)$. We write $\omega$ in terms of the derivatives of $h_1,h_2$ because in this region $\omega$ is necessarily exact, and admits a primitive $\omega=d\alpha$ of the form $\alpha= h_1(r)dx + h_2(r)dy$. Notice that the kernel of $\omega$ is generated by $X=-h_2'(r)\pp{}{x} + h_1'(r) \pp{}{y}$, so the slope function can be expressed as $k_U(r)=\frac{(h_1'(r),h_2'(r))}{|(h_1',h_2')|}$.

When $\omega$ is written as in Equation \eqref{eq:wT2}, we will say, abusing terminology, that $\omega$ is a $T^2$-invariant Hamiltonian structure in $U$. If furthermore the function $f$ is constant on each torus, which is the case if the integrable region is defined by this first integral, then the expression of $\lambda$ is $\lambda= g_1(r)dx + g_2(r)dy+ g_3(r)dr.$
Finally, up to a homotopy of stabilizing one-forms \cite[Lemma 3.9]{CV} relative to the boundary of $U$, we can assume that $\lambda$ takes the form
\begin{equation}\label{eq:lam}
\lambda= g_1(r)dx + g_2(r)dy,
\end{equation}
in a smaller integrable region contained in $U$. We will always consider this simplification. Again abusing terminology, we say that $\lambda$ is $T^2$-invariant if it can be written as in Equation \eqref{eq:lam}. If both $\lambda$ and $\omega$ are $T^2$-invariant in $U\cong T^2\times I$, we say that $U$ is a $T^2$-invariant region of the stable Hamiltonian structure. It can be helpful to understand the $T^2$-invariant forms $\omega$ and $\lambda$ as curves $H=(h_1,h_2):I\rightarrow \mathbb{R}^2$ and $G=(g_1,g_2):I\rightarrow \mathbb{R}^2$ in the plane. For instance, in this language, the primitive one-form $\alpha$ is a contact form if and only if $H$ is an immersion that turns all the time in the same direction, i.e. clockwise or counterclockwise with respect to the origin. If we are given a $T^2$-invariant one-form that stabilizes a $T^2$-invariant Hamiltonian structure near the boundary of $T^2\times I$, the following Proposition (a restatement of \cite[Proposition 3.14]{CV}) allows us to extend it to all $T^2\times I$ as long as the kernel of the Hamiltonian structure has non-constant slope.

\begin{prop}\label{prop:CV1}
Let $\omega$ be a $T^2$-invariant Hamiltonian structure in $T^2\times I$, and assume that the slope of $\omega$ is non-constant. Given $T^2$-invariant stabilizing forms $\lambda_0,\lambda_1$ of $\omega$ near $r=0$ and $r=1$ respectively, there exists a $T^2$-invariant stabilizing form $\lambda$ of $\omega$ such that $\lambda=\lambda_0$ near $r=0$ and $\lambda=\lambda_1$ near $r=1$.
\end{prop}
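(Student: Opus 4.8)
The plan is to work entirely in the $T^2$-invariant picture and reduce the statement to a question about planar curves. Writing $\omega = h_1'(r)\,dr\wedge dx + h_2'(r)\,dr\wedge dy$, a $T^2$-invariant stabilizing form $\lambda = g_1(r)\,dx + g_2(r)\,dy$ satisfies the three conditions as follows: $d\omega=0$ is automatic; $\ker\omega \subset \ker d\lambda$ translates to the requirement that $d\lambda = (g_1'(r)\,dr\wedge dx + g_2'(r)\,dr\wedge dy)$ vanish on $X = -h_2'(r)\,\partial_x + h_1'(r)\,\partial_y$, i.e. $g_1' h_1' + g_2' h_2' = 0$, which says $G'(r) \perp H'(r)$ at every $r$; and $\lambda\wedge\omega\neq 0$ unwinds to $g_1 h_2' - g_2 h_1' \neq 0$, i.e. $G(r)$ is never parallel to $H'(r)$ (equivalently $\langle G(r), H''(r)^{\perp}\rangle$... more simply $\det(G(r)\,|\,H'(r))\neq 0$). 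So I need: given the curve $H$ with $H'$ never vanishing and with turning (i.e.\ the direction of $H'$ non-constant, which is exactly the non-constant slope hypothesis), and given boundary data $G_0$ near $r=0$ and $G_1$ near $r=1$ each satisfying $G_i' \perp H'$ and $\det(G_i\,|\,H')\neq 0$, construct $G$ on all of $I$ with these two properties and matching the boundary data.

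First I would reformulate the perpendicularity condition. Since $H'(r)\neq 0$, the condition $G'(r)\perp H'(r)$ means $G'(r)$ is a (real, possibly zero) multiple of $H'(r)^{\perp}$, the rotation of $H'$ by $\pi/2$. Rather than integrating this directly, the cleaner route is to pass to the ``angular'' description used implicitly in \cite{CV}: let $\theta(r)$ be a continuous lift of the direction of $H'(r)$, so that the slope being non-constant means $\theta$ is non-constant. Writing $G(r) = \rho(r)(\cos\phi(r), \sin\phi(r))$ in polar form (valid wherever $G\neq 0$, which the non-degeneracy $\det(G\,|\,H')\neq 0$ forces), one computes that $G'\perp H'$ becomes a first-order ODE coupling $\rho,\phi$ to $\theta$; and $\det(G\,|\,H')\neq 0$ becomes $\phi(r)\neq \theta(r) \pmod \pi$. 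The key structural fact, which is the content of \cite[Proposition 3.14]{CV} and which I would invoke or re-derive, is that along any arc where $\theta$ is monotone the stabilizing curves $G$ are forced to ``track'' $\theta$ in a controlled way, and where $\theta$ turns one can freely adjust $G$; non-constancy of $\theta$ guarantees there is at least one nondegenerate turning arc where the interpolation can be absorbed.

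Concretely, the construction I would carry out: (1) extend $G_0$ from a neighborhood of $r=0$ and $G_1$ from a neighborhood of $r=1$ as far as possible by solving the ODE $G' = c(r)\,H'(r)^{\perp}$ with $c$ chosen freely (any smooth $c$ works for the kernel condition), monitoring the open condition $\det(G\,|\,H')\neq 0$; (2) on the remaining middle subinterval, where by the non-constant slope hypothesis $\theta$ genuinely varies, use the turning of $H'$ to rotate the partially-extended $G_0$ onto the partially-extended $G_1$ while keeping $\det(G\,|\,H')\neq 0$ — this is exactly the situation \cite{CV} handles, since a turning direction field $H'^{\perp}$ lets $G$ sweep through all directions transverse to it; (3) smooth out the two matching points using that linear combinations and reparametrizations of solutions of the (linear, inhomogeneous in the free function $c$) kernel equation again solve it, together with the fact that $\{\,\det(G\,|\,H')\neq 0\,\}$ is an open convex-in-each-fiber condition so convex interpolation of two valid $G$'s at a point stays valid. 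Throughout, one must check the remaining condition $\lambda\wedge\omega\neq 0$ is the only closed condition being imposed and that it is preserved.

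The main obstacle is step (2): ensuring that while ``rotating'' $G$ across the middle region one never violates $\det(G(r)\,|\,H'(r))\neq 0$, i.e.\ $G$ never becomes parallel to $H'$. This is genuinely where the non-constant slope hypothesis is used and cannot be dropped — if $\theta$ were constant, $H'$ would point in a fixed direction and the constraint $G'\perp H'$ would force $G$ to move only along $H'^{\perp}$, so $G$ could never cross the forbidden direction $H'$ if it started on the wrong side, making interpolation between incompatible $G_0,G_1$ impossible. With $\theta$ non-constant, one picks a subinterval on which $\theta$ is strictly monotone (such exists by non-constancy and the structure of $\theta$), and on it the forbidden direction $H'(r)$ itself rotates monotonically, so one can choose the free function $c(r)$ making $G$ rotate \emph{with} it but staying, say, a bounded angular distance ahead — a standard ODE comparison argument. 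The bookkeeping of signs (the sign of $\det(G\,|\,H')$, which corresponds to $\lambda$ evaluating positively versus negatively on the characteristic foliation, and which must be consistent with the given boundary data) is the delicate part; the existence portion, granting \cite[Proposition 3.14]{CV}, is then essentially a matter of carefully quoting and re-assembling that result, which is why the proposition is stated here as ``a restatement of'' it.
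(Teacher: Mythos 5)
The key computation on which your whole scheme rests is wrong. With $\omega=h_1'(r)\,dr\wedge dx+h_2'(r)\,dr\wedge dy$ and $X=-h_2'(r)\,\partial_x+h_1'(r)\,\partial_y$ spanning $\ker\omega$ (as you yourself set up), one gets $\iota_Xd\lambda=(g_1'h_2'-g_2'h_1')\,dr$, so the condition $\ker\omega\subset\ker d\lambda$ is $g_1'h_2'-g_2'h_1'=0$, i.e.\ $G'(r)$ is \emph{parallel} to $H'(r)$ (equivalently $d\lambda=f\,\omega$ with $f=c(r)$, which is exactly the proportionality factor appearing throughout the paper), not $g_1'h_1'+g_2'h_2'=0$, i.e.\ not $G'\perp H'$. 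A quick sanity check exposes the error: for a contact primitive one has $\lambda=\alpha$, $G=H$, and $d\lambda=\omega$, which satisfies the parallel condition trivially but would violate your perpendicularity condition identically. Since your steps (1)--(3) and, crucially, your analysis of where the non-constant slope hypothesis enters are all phrased in terms of the ODE $G'=c(r)H'(r)^{\perp}$ and of $G$ ``rotating with'' the forbidden direction, the proposed construction does not go through as written. Under the correct constraint $G'=c(r)H'(r)$ the motion of $G$ is parallel to the forbidden line $\mathbb{R}H'(r)$, so the relevant invariant is the signed distance of $G$ to that line (equivalently $\det(G,H'/|H'|)$), which is unchanged by the motion itself and can only be altered where $H'$ turns; this is a genuinely different geometry from the angular-tracking picture you describe, and it is precisely how constant slope obstructs interpolation and non-constant slope enables it. (Your identification of the nondegeneracy condition $g_1h_2'-g_2h_1'\neq0$ is correct; the error is only in the kernel condition, but it is the load-bearing one.)

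On the comparison with the paper: the paper offers no proof of this proposition at all---it is stated explicitly as a restatement of \cite[Proposition 3.14]{CV} and used as a black box---so your closing remark that, ``granting \cite[Proposition 3.14]{CV}, the existence portion is a matter of carefully quoting that result'' is circular rather than a proof: the statement to be proved \emph{is} that proposition. If you want an actual argument you must either reproduce the Cieliebak--Volkov construction (which works with the correct parallel-transport constraint on $G$, extending from both ends and using an interval where $H'$ turns to move $G$ far along the $H'$-direction and across to the prescribed terminal data without ever becoming parallel to $H'$) or honestly reduce to the citation, as the paper does; the hybrid sketch you give does neither, because its mechanism is built on the mistranslated condition.
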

This result will be used several times to construct stabilizing one-forms. When one starts with a $T^2$-invariant stabilizing one-form and we deform the Hamiltonian structure in a $T^2$-invariant way and relative to the boundary, we need a parametric variant of this proposition. The following statement combines \cite[Lemma 3.10]{CV} and \cite[Proposition 3.17]{CV}.

\begin{prop}\label{prop:param}
    Let $(\lambda_0,\omega_0)$ be a $T^2$-invariant stable Hamiltonian structure in $T^2\times I$. Let $\omega_t$ be a homotopy of $T^2$-invariant Hamiltonian structures such that $\omega_t=\omega_0$ in $T^2\times ([0,\delta]\cup [1-\delta,1])$. Assume either that the slope of $\omega_0$ is constant in $[0,\delta]\cup [1-\delta, 1]$ or that the slope of $\omega_t$ is non-constant in $[\delta,1-\delta]$ for each $t$. Then there exists a homotopy of $T^2$-invariant stable Hamiltonian structures $(\lambda_t,\omega_t)$ such that $(\lambda_t,\omega_t)=(\lambda_0,\omega_0)$ in $T^2\times ([0,\epsilon]\cup [1-\epsilon,1])$, where $\varepsilon<\delta$.
\end{prop}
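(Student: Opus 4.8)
The plan is to deduce Proposition~\ref{prop:param} from the non-parametric statement Proposition~\ref{prop:CV1} together with the two cited lemmas of Cieliebak--Volkov, being careful that the whole construction depends smoothly on $t$. Writing $\omega_t$ as a curve $H_t=(h_1^t,h_2^t):I\to\mathbb R^2$ and $\lambda_0$ as a curve $G_0=(g_1^0,g_2^0)$, the stability condition $\ker\omega_t\subset\ker d\lambda_t$ is purely pointwise in $r$ and, for $T^2$-invariant data, amounts to an explicit sign/positivity condition relating $G_t'$ and $H_t'$ (essentially $G_t$ must be ``transverse'' to the slope line of $H_t$ in the right direction). So the first step is to recall this explicit description and observe that $(\lambda_0,\omega_0)$ being stable says precisely that $G_0$ satisfies the inequality governed by $H_0=H_t$ on the collar $T^2\times([0,\delta]\cup[1-\delta,1])$.

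The second step handles the two hypotheses separately. If the slope of $\omega_0$ is constant on the collar, then on each collar component the slope line is fixed, and the stability inequality for $G_0$ against $H_t$ is an open condition that $G_0$ already satisfies for $t=0$; since $\omega_t=\omega_0$ there, it continues to hold for all $t$, so $\lambda_0$ itself stabilizes $\omega_t$ near the boundary for every $t$, and we may take $\lambda_t=\lambda_0$ on a slightly smaller collar $T^2\times([0,\varepsilon]\cup[1-\varepsilon,1])$. If instead the slope of $\omega_t$ is non-constant on $[\delta,1-\delta]$ for each $t$, then for each fixed $t$ we apply Proposition~\ref{prop:CV1} on $T^2\times[\delta,1-\delta]$ (after rescaling the interval) with boundary data $\lambda_0$ near $r=\delta$ and near $r=1-\delta$, obtaining a $T^2$-invariant stabilizing form of $\omega_t$ that agrees with $\lambda_0$ near the endpoints; gluing with $\lambda_0$ on the two collars gives, for each $t$, a stabilizing $\lambda_t$ of $\omega_t$ with the required boundary behaviour.

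The third and genuinely delicate step is smoothness in $t$: Proposition~\ref{prop:CV1} is non-parametric, and a naive $t$-by-$t$ application need not produce a smooth family. Here is where \cite[Lemma 3.10]{CV} and \cite[Proposition 3.17]{CV} enter. The idea is to use the fact that the space of $T^2$-invariant stabilizing one-forms of a fixed $T^2$-invariant Hamiltonian structure with prescribed boundary values is nonempty (by Proposition~\ref{prop:CV1}) and convex-like in a suitable sense: a convex combination of two stabilizing forms with the same $\omega$ is again stabilizing, as already noted in the paragraph on gluing stable homotopies. One first produces a not-necessarily-smooth selection $t\mapsto\lambda_t$, then replaces it by a smoothed-out family using a partition of unity in $t$ and convex interpolation, invoking the cited lemmas to guarantee that the interpolants remain stabilizing (the lemmas are presumably exactly the technical statements asserting that such parametric smoothing preserves stability, possibly after shrinking the collar from $\delta$ to some $\varepsilon<\delta$). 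Alternatively, and perhaps cleaner, one re-runs the proof of Proposition~\ref{prop:CV1} itself keeping track of parameters: that proof constructs $\lambda$ from $\omega$ and boundary data by an explicit integration/convexity procedure whose inputs depend smoothly on $t$, so the output does too.

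The main obstacle I expect is precisely this parametric regularity, and in particular the interface between the two cases: near the transition radii $r=\delta$ and $r=1-\delta$ one must ensure the glued $\lambda_t$ is smooth both in $r$ and in $t$, which forces the shrinkage to $\varepsilon<\delta$ so that the gluing happens in a region where $\lambda_t=\lambda_0$ is locally constant in $t$. Everything else --- the pointwise characterization of $T^2$-invariant stability, the openness argument in the constant-slope case, and the single-$t$ application of Proposition~\ref{prop:CV1} --- is routine given the machinery already recalled in Section~\ref{ss:inteT2}. I would therefore spend the bulk of the write-up carefully citing \cite[Lemma 3.10]{CV} and \cite[Proposition 3.17]{CV} for the smoothing and explaining the collar bookkeeping, and treat the rest briefly.
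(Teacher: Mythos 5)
The paper itself does not reprove this statement: Proposition~\ref{prop:param} is quoted as a combination of \cite[Lemma 3.10]{CV} and \cite[Proposition 3.17]{CV}, so the question is whether your reconstruction is sound, and it has two genuine gaps. The first is the smoothing-in-$t$ step. For $T^2$-invariant data, writing $\omega_t\leftrightarrow H_t=(h_{1,t},h_{2,t})$ and $\lambda\leftrightarrow G=(g_1,g_2)$, stabilization splits into the open inequality $h_{1,t}'g_2-h_{2,t}'g_1>0$ and the \emph{equality} $g_1'h_{2,t}'-g_2'h_{1,t}'\equiv 0$, i.e. $G'\parallel H_t'$. The convexity you invoke is only valid for stabilizers of one and the same $\omega$: a partition-of-unity combination of forms $\lambda_{t_i}$, each satisfying the equality constraint for its own $\omega_{t_i}$, will in general satisfy it for no intermediate $\omega_t$, because this constraint is not open and is tied to $H_t'$, which moves with $t$. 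So the proposed smoothing of a measurable selection $t\mapsto\lambda_t$ does not go through, and the hedge that the cited lemmas "presumably" assert exactly this cannot be leaned on — they are parametric constructions, not averaging statements.

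The second gap is that the constant-slope alternative is never actually used, and its role is misidentified. That $\lambda_0$ stabilizes $\omega_t$ on the collar is automatic in \emph{both} cases, since $\omega_t=\omega_0$ there; the hypothesis that the slope of $\omega_0$ is constant on $[0,\delta]\cup[1-\delta,1]$ is needed for what happens on $[\delta,1-\delta]$, where in this case the slope of $\omega_t$ may be constant for some $t$, so Proposition~\ref{prop:CV1} is simply unavailable and your case analysis produces no extension at all. The actual mechanism (this is the content of \cite[Lemma 3.10]{CV}) is that on a constant-slope collar $G$ can be translated along the fixed direction of $H'$ without violating either condition, and this freedom on $[\epsilon,\delta]$ and $[1-\delta,1-\epsilon]$ is what absorbs the $t$-dependent matching over the middle region — it is also precisely why the conclusion only fixes $(\lambda_t,\omega_t)$ on the smaller collar $\epsilon<\delta$. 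Your sounder suggestion, re-running the proof of Proposition~\ref{prop:CV1} with $t$ as a parameter, is essentially \cite[Proposition 3.17]{CV}, but it only covers the second alternative, and even there one must check that the choices in that proof (the subintervals of non-constant slope, the interpolating functions) can be made smoothly in $t$, which you assert rather than verify.
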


\section{Obstructions from partial sections}\label{s:Reebobs}

In this section, we discuss some obstructions for a volume-preserving vector field $X$ on a three-dimensional manifold $M$ to be Reeb-like or to admit a global cross-section. Both obstructions will come from the existence of certain partial sections (see Definition \ref{defi:partialsec} below). A simple example of an obstruction for a flow to be Reeb-like is the existence of an embedded closed surface transverse to the flow: this prevents $X$ from being Reeb-like by a Stokes argument. This is satisfied for flows admitting a global cross-section, or for flows transverse to taut foliations with a compact leaf as constructed in \cite{Ga}. Other simple obstructions include, for example, the existence of an invariant Reeb cylinder, an object that also prevents the flow from being stable-like \cite{PRT}.

\subsection{Signed partial sections and Reeb-like obstructions}\label{ss:partial}

The main obstructions that we use in this work come from the theory of partial sections.

\begin{defi}\label{defi:partialsec}
Let $X$ be a non-vanishing vector field on a three-manifold $M$. An immersed surface with boundary $\iota: \Sigma \rightarrow M$ is a partial section of $X$ if
\begin{itemize}
\item[-] the interior of $\iota(\Sigma)$ is embedded and transverse to $X$,
\item[-] the boundary of $\iota(\Sigma)$ is immersed and tangent to $X$.
\end{itemize}
\end{defi}
Abusing notation, we will sometimes use $\Sigma$ to denote as well the immersed surface in $M$. We will call the periodic orbits of $X$ that are in the image of $\iota|_{\partial \Sigma}$ the \emph{binding} orbits of $\Sigma$, in analogy with open book decompositions. A partial section can be oriented by the unique orientation that makes $X$ positively transverse to its interior. This induces an orientation on its boundary components, that are mapped to periodic orbits of $X$. We say that the immersed surface $\Sigma$ is a \emph{positive} partial section if the orientation induced on each boundary component of $\Sigma$ corresponds to the orientation induced by the positive direction of $X$. If the orientation on each boundary component is opposite to the one induced by the flow, we call $\Sigma$ a \emph{negative} partial section. Note that in general, a partial section is neither positive nor negative. Our first use of signed partial sections is that they serve as obstructions to the Reeb-like property.

\begin{lemma}\label{lem:obs}
Let $X$ be a vector field in $M$ that admits a positive and a negative partial section. Then $X$ is not Reeb-like.
\end{lemma}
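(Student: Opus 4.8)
The plan is to derive a contradiction via a Stokes-type argument using the contact form $\alpha$ witnessing that $X$ is Reeb-like. Suppose $X$ is Reeb-like, so there is a contact form $\alpha$ with $\alpha(X)>0$ and $\iota_X d\alpha = 0$. First I would analyze what happens on a partial section $\Sigma$: on the interior, $X$ is positively transverse, and since $\iota_X d\alpha = 0$ while $X$ is transverse to $\Sigma^\circ$, the restriction $d\alpha|_{\Sigma^\circ}$ is an area form (the kernel of $d\alpha$ is spanned by $X$, which is not tangent to the interior), and with the orientation of $\Sigma$ making $X$ positively transverse, $d\alpha|_\Sigma$ is in fact a \emph{positive} area form on the interior. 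Hence $\int_\Sigma d\alpha > 0$ for any partial section $\Sigma$ (the boundary is a measure-zero set of binding orbits, so it does not affect the integral, and $\Sigma$ is compact so the integral is finite and strictly positive).

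Next I would compute $\int_\Sigma d\alpha$ via Stokes' theorem as $\int_{\partial\Sigma}\alpha$, being careful with the immersed boundary. Each boundary component maps to a periodic orbit $\gamma$ of $X$, traversed either in the direction of $X$ (for the orientation induced on $\partial\Sigma$ by the outward-normal-first convention) or against it, and with some positive multiplicity $m$. Along such a curve, $\alpha$ restricted to the orbit satisfies $\alpha(X) > 0$, so $\int_\gamma \alpha$ over one period in the direction of $X$ equals $\int_0^T \alpha(X)\,dt > 0$, call it $\ell(\gamma)>0$. Therefore, if $\Sigma$ is a \emph{positive} partial section, every boundary component is traversed in the positive direction of $X$, so $\int_{\partial\Sigma}\alpha = \sum_i m_i\,\ell(\gamma_i) > 0$, consistent with the area computation. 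But if $\Sigma$ is a \emph{negative} partial section, every boundary component is traversed against $X$, giving $\int_{\partial\Sigma}\alpha = -\sum_i m_i\,\ell(\gamma_i) \le 0$ — in fact strictly negative as long as $\partial\Sigma\neq\emptyset$, and $\partial\Sigma$ must be nonempty since an immersed compact surface transverse in its interior to a nonsingular flow cannot be closed (a closed surface would be a global cross section component forcing $\int_\Sigma d\alpha = 0$ only if... actually here it would force $0 < \int_\Sigma d\alpha = \int_{\partial\Sigma}\alpha = 0$, contradiction; so $\partial \Sigma \neq \emptyset$ either way). This contradicts $\int_\Sigma d\alpha > 0$. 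Hence no negative partial section can coexist with the Reeb-like structure, and in particular $X$ admitting both a positive and a negative partial section is impossible.

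The one subtlety — and the main thing to handle carefully rather than the main obstacle — is the immersed nature of $\partial\Sigma$: the boundary is only immersed, so Stokes' theorem must be applied to the abstract surface $\Sigma$ with its boundary and the pullback form $\iota^*\alpha$, and the integral $\int_{\partial\Sigma}\iota^*\alpha$ is then a sum over the abstract boundary circles, each of which maps to a periodic orbit with a well-defined sign (positive or negative according to whether $\Sigma$ is a positive or negative partial section) and positive multiplicity. I would also remark that the orientation bookkeeping is the only place where the positive/negative distinction enters, and that the argument in fact shows a stronger dichotomy: for a Reeb-like flow, on any partial section with nonempty boundary the signed sum $\sum \pm m_i\,\ell(\gamma_i)$ of binding orbit actions must be strictly positive, which immediately precludes the existence of a negative one (and, combined with the hypothesis, a positive one too, completing the proof). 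This mirrors the classical fact that Reeb flows admit no transverse closed surface, of which this is a refinement.
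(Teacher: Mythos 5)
Your Stokes computation is the right engine, and it is the same one the paper uses, but there is a genuine gap at the orientation step that you dismiss as mere bookkeeping. You assert that, with $\Sigma$ oriented so that $X$ is positively transverse to its interior, $d\alpha|_{\Sigma^\circ}$ is a \emph{positive} area form, hence $\int_\Sigma d\alpha>0$ for \emph{every} partial section. Since $\iota_X d\alpha=0$, one has $d\alpha(v,w)=\frac{(\alpha\wedge d\alpha)(X,v,w)}{\alpha(X)}$ for $v,w$ tangent to $\Sigma$, so this positivity holds if and only if $\alpha\wedge d\alpha$ is a \emph{positive} volume form for the ambient orientation of $M$ (the orientation with respect to which ``positively transverse'' and the signs of the partial sections are defined). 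The definition of Reeb-like only requires $\alpha(X)>0$, $\iota_X d\alpha=0$ and $\alpha\wedge d\alpha\neq 0$; it does not fix the sign of $\alpha\wedge d\alpha$ relative to that orientation. Consequently your intermediate ``stronger dichotomy'' — that a Reeb-like flow admits no negative partial section — is false: take a genuine Reeb flow whose disk-like page is a positive partial section for the contact orientation, and reverse the orientation of $M$; the flow is still Reeb-like (the definition is orientation-independent), but the page is now a negative partial section. Your argument, which derives the contradiction only from the negative section, therefore breaks exactly in the case $\alpha\wedge d\alpha<0$, and the positive section — which the hypothesis provides and which you never actually use — is what is needed there.

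The paper's proof is the same computation but with the missing case analysis: having fixed an orientation of $M$, if $\alpha\wedge d\alpha>0$ then $d\alpha$ is a positive area form on both sections and the \emph{negative} section $\Sigma_2$ gives $\int_{\Sigma_2}d\alpha>0$ versus $\int_{\partial\Sigma_2}\alpha<0$; if $\alpha\wedge d\alpha<0$ then $d\alpha$ is a negative area form and the \emph{positive} section $\Sigma_1$ gives $\int_{\Sigma_1}d\alpha<0$ versus $\int_{\partial\Sigma_1}\alpha>0$. This case split is precisely why the lemma hypothesizes both a positive and a negative partial section. Your handling of the immersed boundary and of the binding multiplicities (integrating $\iota^*\alpha$ over the abstract boundary circles, each contributing $\pm m_i\,\ell(\gamma_i)$ with $\ell(\gamma_i)=\int_0^{T_i}\alpha(X)\,dt>0$) is fine and matches the paper; the fix needed is only to justify the sign of $d\alpha|_{\Sigma^\circ}$ by splitting into the two cases according to the sign of $\alpha\wedge d\alpha$, and to invoke the section of the opposite sign in the second case.
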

\begin{proof}
Let $\Sigma_1$ and $\Sigma_2$ be a positive and a negative partial section of $X$ respectively, oriented by the unique orientation that makes $X$ positively transverse to them. The boundaries $\partial \Sigma_1=\bigcup_{i=1}^k \sigma_i^1$ and $\partial \Sigma_2= \bigcup_{j=1}^r \sigma_j^2$ are mapped in $M$ to two collections of embedded closed orbits of $X$. By assumption, each curve $\sigma_i^1$ is oriented positively with respect to $X$, and each curve $\sigma_j^2$ is oriented with the opposite orientation with respect to $X$.  By contradiction, assume that there exists a contact form $\alpha$ such that $\alpha(X)>0$ and $\iota_Xd\alpha=0$.  Having fixed an orientation of $M$, if $\alpha\wedge d\alpha>0$ then $d\alpha$ is a positive area-form in the interior of $\Sigma_1$ and $\Sigma_2$. In particular $\int_{\Sigma_2} d\alpha>0$, but by Stokes theorem
$$\int_{\Sigma_2}d\alpha= \sum_{j=1}^r \int_{\sigma_j^2} \alpha<0,$$
reaching a contradiction. Analogously if $\alpha\wedge d\alpha<0$, then $d\alpha$ is a negative area-form in $\Sigma_1$ and $\Sigma_2$. We reach a contradiction since $\int_{\Sigma_1}d\alpha<0$ but by Stokes theorem
$$\int_{\Sigma_1}d\alpha=\sum_{i=1}^k \int_{\sigma_i^1} \alpha>0.$$
 We conclude that $X$ is not Reeb-like.
\end{proof}
The existence of a partial section is, in general, not robust under perturbations. Under technical conditions it can become a robust property (see Section \ref{ss:deltastrong}).
\begin{Remark}
In \cite[Theorem 1.4]{PRT}, a characterization of Reeb-like flows was given using geometric currents, in a similar fashion of McDuff-Sullivan's characterization of contact type two-forms \cite{Mc, Su}. Assuming that $M$ does not fiber over the circle, a vector field $X$ in $M$ is Reeb-like if and only if for some volume form $\mu$ there is no sequence of zero-flux two-chains whose boundary approximates a foliation cycle in the weak topology. A two-chain $c:\Omega^2(M)\rightarrow \mathbb{R}$ is called zero-flux if $c(\iota_X\mu)=0$. We can illustrate this theorem in Lemma \ref{lem:obs}. Orient the positive and the negative partial sections such that their boundary are foliation cycles (i.e. the boundaries are oriented positively with respect to the flow). This implies that whatever the volume form $\mu$ is, the flux through one of the sections is positive, and the flux through the other one is negative. Hence, a positive linear combination of both sections (this combination depends on $\mu$) defines a zero-flux two-chain whose boundary is a foliation cycle, showing that $X$ is not Reeb-like.
\end{Remark}

\begin{Remark}\label{rem:ctype}
A property related to Reeb-like flows is the contact type property of two-forms. An exact non-degenerate two-form is of contact type if it admits a primitive contact form. The kernel of such a two-form is always a Reeb-like flow. McDuff-Sullivan's characterization \cite{Mc,Su} of the contact type property can be used to define obstructions for a two-form to be of contact type, see for instance \cite{Cie}. We point out, however, that these obstructions do not hold as obstructions for the kernel of the two-form to be Reeb-like, like Lemma \ref{lem:obs} does. Indeed, when the line bundle defined by $\ker d\alpha$ admits a non-trivial smooth first integral, there are examples where the line bundle lies in the kernel of several non-degenerate exact two-forms, where some are of contact type and other are not.
\end{Remark}

\subsection{Partial sections on integrable regions}

 From the previous obstructions we deduce a refinement of a result obtained by Cieliebak and Volkov. Let $X$ be a vector field in $M$ admitting an integrable region $U\cong T^2\times I$. The following definition was introduced in \cite{CV}. We say that the slope function $k_U$ of $X$ in $U$ \emph{twists} if it makes one full turn clockwise and one full turn counterclockwise. The following statement was proven in \cite[Theorem 3.36]{CV}.

\begin{theorem}[\cite{CV}]\label{thm:CVtw}
If a volume-preserving vector field $X$ admits an integrable region where the slope function twists, then $X$ is not Reeb-like.
\end{theorem}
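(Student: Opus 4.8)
The strategy is to use Lemma \ref{lem:obs}: I will show that if the slope function $k_U$ of the integrable region $U\cong T^2\times I$ twists, then $X$ admits \emph{both} a positive and a negative partial section, which immediately implies $X$ is not Reeb-like. The point of the twisting hypothesis is that it produces, on suitable torus fibers $T^2\times\{r\}$, closed orbits of $X|_{T^2\times\{r\}}$ whose homology class can be realized as the boundary of an embedded (or immersed) surface that is transverse to $X$ everywhere except along those orbits, and moreover the sign of the intersection can be controlled: one full turn clockwise gives one sign, one full turn counterclockwise gives the other.

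Concretely, I would first put $X$ in the normal form of Theorem \ref{thm:Ste}: in coordinates $(x,y,r)$ on $T^2\times I$,
\begin{equation*}
X|_U = G(x,y,r)\bigl(f_1(r)\,\partial_x + f_2(r)\,\partial_y\bigr),
\end{equation*}
so that the slope function is $k_U(r)=(f_2(r),-f_1(r))/|(f_1(r),f_2(r))|\in S^1$. Saying $k_U$ twists means the curve $r\mapsto (f_1(r),f_2(r))$ in $\mathbb{R}^2\setminus\{0\}$ winds once in each rotational direction, hence there exist subintervals $[a_1,b_1]$ (on which the direction turns monotonically counterclockwise through a full turn) and $[a_2,b_2]$ (monotonically clockwise through a full turn). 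On a subinterval where the direction of $(f_1,f_2)$ turns monotonically through $2\pi$, every rational slope $p/q$ is hit: so I can pick levels $r_0<r_1$ inside $[a_1,b_1]$ where $X$ is tangent to a fixed primitive homology class $c\in H_1(T^2;\mathbb{Z})$, i.e.\ $X|_{T^2\times\{r_i\}}$ is a linear foliation by parallel closed orbits in class $c$. Between $r_0$ and $r_1$ the slope sweeps a nontrivial arc of $S^1$, so $X$ is transverse to the annulus $A=\gamma\times[r_0,r_1]$, where $\gamma\subset T^2$ is a simple closed curve in the class Poincaré-dual to $c$ (more precisely, choose $\gamma$ so that the linear field $f_1\partial_x+f_2\partial_y$ has nonzero, fixed-sign intersection with $\gamma$ for all $r\in[r_0,r_1]$ — possible because the slope stays in a half-circle's worth of directions away from $c$ on a short enough subinterval, which by the monotone full turn I can still choose to contain two levels tangent to $c$). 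This annulus has boundary two closed orbits of $X$ lying on $T^2\times\{r_0\}$ and $T^2\times\{r_1\}$; orienting $A$ so $X$ is positively transverse, the two boundary orbits get \emph{opposite} induced orientations relative to the flow, which is not yet what I want. The fix, following the Cieliebak--Volkov construction, is to take two such annuli from the counterclockwise-twisting interval and glue them along a common boundary orbit, or alternatively to cap appropriately, so that all remaining boundary components are cooriented with the flow — this is where one full turn (rather than a partial turn) is essential, because it guarantees enough tangency levels of a single class $c$ to arrange the caps consistently, producing a genuine positive partial section. Running the identical construction on the clockwise-twisting interval $[a_2,b_2]$ reverses the role of the two orientations and produces a negative partial section.

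Once I have a positive partial section $\Sigma_+$ and a negative partial section $\Sigma_-$, Lemma \ref{lem:obs} gives the conclusion that $X$ is not Reeb-like. The main obstacle I expect is the bookkeeping in the gluing/capping step: a single transverse annuldus between two tangency levels automatically has boundary orbits of opposite flow-orientation, so the twisting hypothesis must be used twice within one rotational direction — once to get a full turn's worth of levels tangent to a fixed class so the pieces can be assembled into a section whose \emph{every} boundary component is cooriented with $X$. Verifying transversality is preserved under the gluing (corners can be smoothed keeping transversality since $X$ is never tangent to the interiors) and that the resulting immersed surface genuinely satisfies Definition \ref{defi:partialsec} is the technical heart; everything else is the linear-algebra description of linear flows on $T^2$ and an application of the already-proven Lemma \ref{lem:obs}.
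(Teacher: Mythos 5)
Your overall strategy is the paper's: produce a positive and a negative partial section from the twisting slope and invoke Lemma \ref{lem:obs}. However, the construction of the signed sections has a genuine gap, in two places. First, your choice of $\gamma$ is internally inconsistent: you take $\gamma$ in the class Poincar\'e-dual to $c$, with nonzero fixed-sign intersection with the linear field for \emph{all} $r\in[r_0,r_1]$, and at the same time assert that $\partial A=\gamma\times\{r_0,r_1\}$ consists of closed orbits. If $\gamma$ meets the flow direction transversely at the boundary levels, the boundary curves are not tangent to $X$, so $A$ is not a partial section in the sense of Definition \ref{defi:partialsec}; for the boundary to consist of orbits you must take $\gamma$ parallel to the flow direction at the boundary levels, i.e.\ in the class $c$ itself, and then interior transversality is what requires the slope to avoid $\pm c$ strictly between the two levels.

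Second, and more importantly, with your choice of two levels at which $X$ is tangent to the \emph{same} oriented direction, the annulus indeed has boundary components of opposite sign, and the repair you propose does not work as stated: gluing two such annuli along a common boundary orbit places a flow-tangent curve in the interior of the resulting surface, where Definition \ref{defi:partialsec} demands embeddedness and transversality to $X$, and no smoothing can make a surface transverse to $X$ along a curve tangent to $X$; ``capping'' is impossible for the same reason (and the boundary classes are nontrivial in $H_1(T^2)$, so there are no disk caps anyway). The fix that actually works, and is the paper's key observation (Lemma \ref{lem:htw}), is to choose the two levels $a<b$ so that the slope directions there are \emph{antipodal}, $k(b)=-k(a)$ rational, with $k(r)$ staying in one open half-circle in between; a monotone full turn certainly contains such a subinterval. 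Then the single embedded annulus $\gamma\times[a,b]$, with $\gamma$ a closed orbit at level $a$, already has both boundary orbits cooriented with the flow (or both anti-cooriented), because the flow traverses the top boundary curve in the direction opposite to the bottom one, cancelling the opposite boundary orientations induced by the annulus. The clockwise turn yields the half-circle of one sign and the counterclockwise turn the other, giving the positive and negative partial sections without any gluing; with that replacement your argument reduces to the paper's proof via Theorem \ref{thm:Ste}, Lemma \ref{lem:htw} and Lemma \ref{lem:obs}.
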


It turns out that the previous theorem can be deduced using Lemma \ref{lem:obs}, and we obtain a refinement as well. In the next definition, we orient $S^1$ in the clockwise direction, so that given two points $p,q \in S^1$, the interval $[p,q]$ denotes the interval starting at $p$ and ending at $q$ rotating in the clockwise direction. Similarly, the interval $[q,p]$ denotes the interval starting at $p$ and ending at $q$ rotating in the counterclockwise direction.

\begin{defi}
A vector field $X$ admits a positive (respectively negative) half-twisting integrable region if there exists an integrable region $U\cong T^2\times I$ where $X$ is tangent to the torus fibers, and such that the slope $k_U: I\rightarrow S^1$ of $X$ in $U$ satisfies the following property: there exists an interval $[a,b] \subset I$ such that $k(b)=-k(a)$ and $k(r)\subset (k(a),k(b))$ (respectively $k(r)\in (k(b),k(a))$) for each $r\in (a,b)$. If the endpoints $k(a), k(b)$ are rational points in $S^1$, we will say that $X$ admits a rational positive (respectively negative) half-twisting region.
\end{defi}
 Of course, the fact that an integrable region is (positive or negative) half-twisting does not depend on the chosen coordinates in $T^2\times I$. The orientation of $M$ and a choice of coordinate $t$ in $I$ induces an orientation of $T^2$ and hence of $S^1 \cong PH^1(T^2;\mathbb{R})$, so that the sign of a half-twisting region is well-defined and does not depend on the chosen coordinates either.

If $X$ admits an integrable region where the slope twists, then it necessarily admits a positive half-twisting region and a negative half-twisting region. The property of having a positive and a negative half-twisting region is, however, less restrictive and not local: it might happen that each half-twisting region lies in a different integrable region (i.e. the two regions are not contained in a larger connected integrable region).

\begin{lemma}\label{lem:htw}
Let $X$ be a volume-preserving vector field in $M$. If the vector field $X$ admits a positive (respectively negative) rational half-twisting region, then $X$ admits a positive (respectively negative) partial section.
\end{lemma}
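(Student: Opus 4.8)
The plan is to build the partial section explicitly inside the half-twisting integrable region, using the coordinates provided by Sternberg's theorem, and then to observe that rationality of the endpoint slopes is exactly what guarantees that the ``diagonal'' surface we build closes up into a genuine immersed surface with boundary on periodic orbits. I will treat the positive case; the negative case is identical after reversing an orientation. By Theorem \ref{thm:Ste} (Sternberg), on the half-twisting region $U\cong T^2\times I$ we may choose coordinates $(x,y,r)$ in which $X|_U = G(x,y,r)\bigl(f_1(r)\pp{}{x} + f_2(r)\pp{}{y}\bigr)$ with $G>0$, and the slope function is $k_U(r) = \frac{(f_2(r),-f_1(r))}{|(f_1(r),f_2(r))|}$, a point of $PH^1(T^2;\R)\cong S^1$ that is orthogonal to the direction $(f_1(r),f_2(r))$ of the flow on the fiber $\{r\}$. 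Pass to the subinterval $[a,b]\subset I$ witnessing the positive half-twist, so $k_U(b)=-k_U(a)$, both $k_U(a)$ and $k_U(b)$ are rational, and $k_U(r)\in (k_U(a),k_U(b))$ for $r\in(a,b)$.

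Next I would fix, over each fiber $T^2\times\{r\}$, the embedded closed curve in the direction of the flow; because $k_U(a)$ and $k_U(b)$ are rational the flow directions $(f_1(a),f_2(a))$ and $(f_1(b),f_2(b))$ can be taken to be primitive integer vectors $\mathbf{v}_a,\mathbf{v}_b\in\Z^2$ (after rescaling by $G$, which only reparametrizes orbits), so the orbits on the boundary fibers are \emph{closed} periodic orbits; these will be the binding orbits. Over the open region $(a,b)$ the flow need not be periodic, but there I only need a surface transverse to $X$: choose a smooth family of $1$-cycles $\gamma_r\subset T^2\times\{r\}$ whose homology class is a fixed primitive vector $\mathbf{u}\in\Z^2$ transverse to every flow direction $(f_1(r),f_2(r))$ for $r\in[a,b]$ — such a $\mathbf{u}$ exists precisely because the flow direction, equivalently $k_U(r)$, stays in the open half-circle $(k_U(a),k_U(b))$ strictly between the two antipodal rational points, so its perpendicular never sweeps a full circle and a single integer direction can be found in the complementary arc. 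Taking $\Sigma = \bigcup_{r\in[a,b]} \gamma_r$, realized as an embedding $[a,b]\times S^1\to U$ in the interior and matching $\mathbf{v}_a$-, $\mathbf{v}_b$-directed closed orbits on the two ends (one may need to let $\gamma_r$ degenerate / turn into a curve lying \emph{along} the flow near $r=a,b$, i.e. bend the cylinder so its boundary circles become flow-tangent; this is a $C^0$-to-$C^\infty$ interpolation of slopes within the allowed arc, again possible by the half-twist hypothesis), gives an immersed surface whose interior is embedded and transverse to $X$ and whose boundary is tangent to $X$, i.e. a partial section.

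Finally I would check the \emph{sign}. The surface $\Sigma$ carries the orientation making $X$ positively transverse to its interior; the induced boundary orientation on each end circle is then determined by the sign of the turn the slope $k_U$ makes as $r$ approaches that end, and a short bookkeeping with the chosen orientation conventions on $S^1\cong PH^1(T^2;\R)$ (clockwise, as fixed before the definition) shows that the ``positive'' half-twist condition $k_U(r)\in(k_U(a),k_U(b))$ forces both induced boundary orientations to agree with the flow direction of $X$ on $\mathbf{v}_a,\mathbf{v}_b$ — so $\Sigma$ is a \emph{positive} partial section. (Deducing Theorem \ref{thm:CVtw} is then immediate: a twisting slope contains both a positive and a negative rational half-twisting subregion, hence $X$ admits a positive and a negative partial section and is not Reeb-like by Lemma \ref{lem:obs}.)

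The main obstacle I expect is the last gluing/bending step: making the transverse cylinder over $(a,b)$ limit onto genuinely flow-tangent closed curves over $r=a$ and $r=b$ while keeping the interior embedded, and doing so $C^\infty$ rather than just $C^0$. This is where rationality of $k_U(a),k_U(b)$ is essential (it makes the limiting curves \emph{closed} orbits, so the boundary is a union of periodic orbits as Definition \ref{defi:partialsec} demands), and where the ``stays in the open arc'' part of the half-twist hypothesis is essential (it produces a \emph{single} integer transverse direction $\mathbf{u}$ valid on the whole closed interval, so the family $\gamma_r$ never has to jump homology class). Everything else — the Sternberg normal form, Stokes-type sign tracking, the reduction of the negative case — is routine.
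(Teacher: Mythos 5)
Your overall plan (a cylinder over $[a,b]$ in Sternberg coordinates whose boundary lies on closed orbits in the endpoint tori) is the right one, but the key step as you wrote it fails. You claim there is a fixed primitive class $\mathbf{u}\in\Z^2$ transverse to the flow direction $(f_1(r),f_2(r))$ for every $r\in[a,b]$, ``because the slope never sweeps a full circle''. Transversality, however, only sees unoriented directions: since $k_U(a)$ and $k_U(b)=-k_U(a)$ are antipodal and $k_U$ is continuous with values in the closed arc between them, the intermediate value theorem forces $k_U$ (hence the flow direction, which is just $k_U$ rotated by a quarter turn) to realize every line direction in $\mathbb{RP}^1$ as $r$ runs over $[a,b]$. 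Consequently no constant class is transverse to the flow on all of $[a,b]$, and the only class transverse on the open interval $(a,b)$ is the class of the flow direction at the endpoints themselves (the rational direction orthogonal to $k_U(a)$). Your proposed repair --- ``bending'' the cylinder near $r=a,b$ so that interior curves of class $\mathbf{u}$ limit onto flow-tangent curves of class $\mathbf{v}_a,\mathbf{v}_b$ --- cannot work either: along a continuous family of loops in the torus fibers the homology class is locally constant, so the interior class would have to equal $\pm\mathbf{v}_a$ from the start. So the ``main obstacle'' you flag at the end is not a technical gluing issue; it is the point where the construction, as stated, breaks.

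The fix is exactly the choice your own constraint forces, and it is what the paper does: take $\gamma$ a closed orbit of $X$ in $T^2\times\{a\}$ (it exists because $k_U(a)$ is rational) and set $\Sigma=\gamma\times[a,b]$. Since $k_U(b)=-k_U(a)$, the curve $\gamma\times\{b\}$ is again a closed orbit, so both boundary components are tangent to $X$ with no bending or interpolation needed; and since $k_U(r)$ lies strictly in the open arc for $r\in(a,b)$, the flow is never parallel to $\gamma$ on the interior levels, so the interior of $\Sigma$ is embedded and transverse to $X$. The sign of the half-twist then determines the induced boundary orientations (this is the only remaining check, which you left as ``short bookkeeping''), giving a positive partial section; the negative case is symmetric. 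With this correction your argument collapses to the paper's one-line construction.
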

\begin{proof}
We will treat the positive case, the negative one being analogous. By hypothesis, there is an integrable region $U\cong T^2\times I$ where the slope function of $X$ positively half-twists. By Theorem \ref{thm:Ste} there are coordinates $(x,y,r)$ of $U$ where the flow is, perhaps after a reparametrization, of the form
$$ X= f_1(r) \pp{}{x} + f_2(r) \pp{}{y}. $$
In these coordinates, the slope function takes the form
$$ k_1:= \frac{(f_2(r),-f_1(r))}{|(f_1(r),f_2(r))|}\longrightarrow S^1.  $$
By continuity, there is some interval $[a,b]\subset [0,1]$ satisfying $k_1(a)=-k_1(b)$ and $k_1(t)\in (k_1(a),k_1(b))$ for each $r\in (a,b)$. Furthermore, the vector $k_1(a)=(p,q)$ is rational. Let $\gamma$ be a closed periodic orbit of $X$ in $T^2\times \{a\}$, and consider the embedded surface with boundary
$$ \Sigma= \gamma \times [a,b] \subset T^2\times I. $$
The vector field $X$ is transverse to the interior of $\Sigma$, and we orient it again by the orientation making $X$ positively transverse to $\Sigma$. 

\begin{figure}[!ht]
\begin{center}
\begin{tikzpicture}
     \node[anchor=south west,inner sep=0] at (0,0) {\includegraphics[scale=0.08]{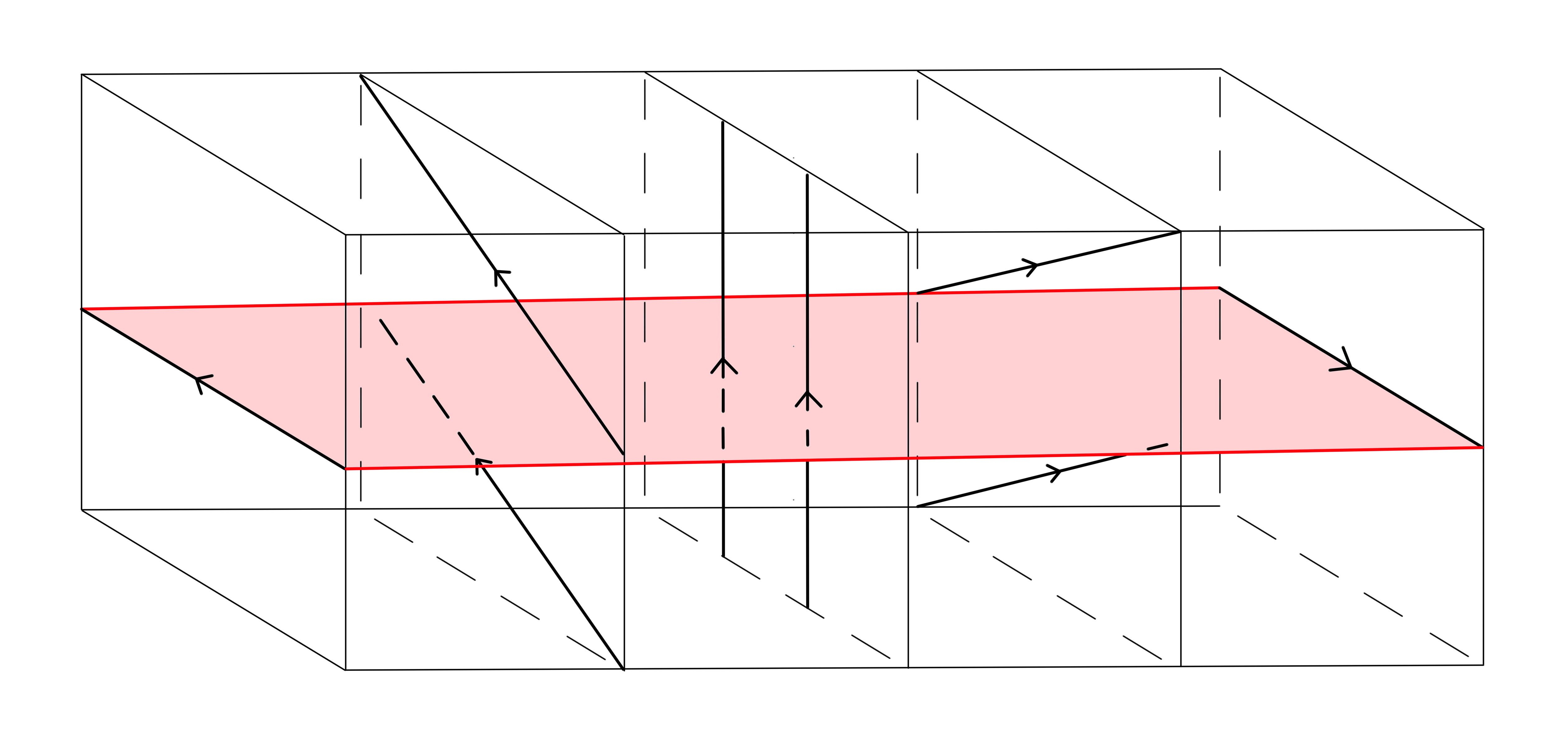}};
     \draw[->] (-0.2+11.2-11.5,1.4-0.5)--(11.2+0.4-11.5,1.4-0.5);
     \draw[->] (-0.2+11.2-11.5,1.4-0.5)--(-0.2+11.2-11.5,1.9-0.5);
     \draw[->] (-0.2+11.2-11.5,1.4-0.5)--(+0.23+11.2-11.5,1.4-0.23-0.5);
     
     \node[scale=0.8] at ( 11.4-11.5,1.65-0.5) {$r$};
     \node[scale=0.8] at (11.6-11.5, 1.1-0.5) {$x$};
     \node[scale=0.8] at (11-11.5, 2.1-0.5) {$y$};
     \end{tikzpicture}
\caption{A positive partial section $\Sigma$ in a positive rational half-twisting region}
\label{fig:part}
\end{center}
\end{figure}

Using that the half-twisting is positive, we deduce that the induced orientation in the boundary orbits $\gamma \times \{a,b\}$ corresponds to the positive orientation of the flow $X$, i.e. $\Sigma$ is a positive partial section of $X$. See Figure \ref{fig:part} for a representation of the positive partial section.
\end{proof}

Combining Lemma \ref{lem:htw} and Lemma \ref{lem:obs}, we deduce a refinement of Theorem \ref{thm:CVtw}.

\begin{corollary}\label{cor:refTw}
If a volume-preserving vector field $X$ admits a positive and a negative rational half-twisting region, then $X$ is not Reeb-like.
\end{corollary}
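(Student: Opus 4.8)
The plan is to deduce Corollary~\ref{cor:refTw} by simply assembling the two lemmas that precede it. Suppose $X$ is a volume-preserving vector field on $M$ that admits a positive rational half-twisting integrable region $U_1 \cong T^2\times I_1$ and a negative rational half-twisting integrable region $U_2\cong T^2\times I_2$. (These two regions need not overlap, and in fact the whole point of the refinement over Theorem~\ref{thm:CVtw} is that they may lie in different integrable regions of $X$.) Applying Lemma~\ref{lem:htw} to $U_1$ produces a positive partial section $\Sigma_1$ of $X$, and applying Lemma~\ref{lem:htw} to $U_2$ produces a negative partial section $\Sigma_2$ of $X$. Thus $X$ admits simultaneously a positive and a negative partial section.

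Now Lemma~\ref{lem:obs} applies verbatim: a vector field admitting both a positive and a negative partial section cannot be Reeb-like. Hence $X$ is not Reeb-like, which is the assertion of the corollary.

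There is essentially no obstacle here, since all the content has already been extracted into Lemmas~\ref{lem:htw} and~\ref{lem:obs}; the proof is a one-line composition of those two results. The only point worth a word of care is that the rationality hypothesis on the endpoints $k(a),k(b)$ of the half-twisting intervals is exactly what Lemma~\ref{lem:htw} needs in order to pick a genuine \emph{closed} periodic orbit $\gamma$ on the relevant torus fiber (so that $\Sigma = \gamma\times[a,b]$ is a compact embedded surface with boundary tangent to $X$), and it is propagated correctly from the hypothesis of the corollary. One might also remark that Theorem~\ref{thm:CVtw} is recovered, since an integrable region where the slope twists (makes a full clockwise turn and a full counterclockwise turn) contains, after a small rational approximation of the relevant slope values along the twist, both a positive and a negative rational half-twisting subregion, so Corollary~\ref{cor:refTw} indeed refines it.
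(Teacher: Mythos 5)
Your proof is correct and matches the paper exactly: the corollary is obtained by combining Lemma \ref{lem:htw} (to produce a positive and a negative partial section from the two rational half-twisting regions) with Lemma \ref{lem:obs}. Your additional remarks on the role of rationality and on recovering Theorem \ref{thm:CVtw} are accurate but not needed for the argument.
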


Throughout this work, we will use the obstructions coming from partial sections and their relation to integrability. These obstructions will be instrumental in proving that the whole set of Reeb-like flows on a three-manifold is not $C^1$-dense among volume-preserving or stable-like flows (Theorem \ref{thm:Reeb}). Furthermore, we are able to introduce them in the characteristic foliation of a stable hypersurface in a symplectic manifold by $C^0$-perturbations, which will be an important step to prove Theorem \ref{thm:main}. 

\begin{Remark}
After this work was written, it was pointed out to the author that Corollary \ref{cor:refTw} and its proof using partial sections were independently observed by Becker, and can be extracted from the arguments in the proof of \cite[Proposition 7]{Be}.
\end{Remark}

\subsection{Obstructions to global cross-sections}\label{ss:obsGCS}

We have seen that the existence of a positive and a negative partial section is an obstruction for a vector field to be Reeb-like. It turns out that the existence of a single signed partial section prevents the vector field from admitting a global cross-section, a fact that we will also need in the proof of Theorem \ref{thm:main}. 

\begin{prop}\label{prop:obsGCS}
    Let $X$ be a non-vanishing vector field on a closed three-manifold $M$. If $X$ admits a signed partial section, then $X$ admits no global cross-section.
\end{prop}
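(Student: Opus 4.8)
The plan is to derive a contradiction from the coexistence of a signed partial section $\Sigma$ and a global cross-section $S$ of $X$, by exhibiting an invariant volume form (or more precisely a closed two-form) with conflicting sign behavior on the two surfaces. First I would use the global cross-section $S$ to construct a distinguished closed two-form: since $X$ admits a global cross-section, $M$ fibers over $S^1$ with fiber $S$, and there is a closed one-form $\tau$ with $\tau(X)>0$ everywhere (the pullback of $d\theta$ under the fibration, suitably normalized). Dually, one can produce a closed two-form $\beta$ on $M$ — for instance $\beta = \iota_X \mu$ for a volume form $\mu$ preserved by $X$, or the "fiberwise area form" $\beta$ associated to $S$ — that restricts to a positive area form on every fiber of the fibration; the key property I want is that $\int_c \beta > 0$ for any closed surface $c$ homologous to a positive multiple of the fiber $[S]$, and that $\beta$ is \emph{exact} or at least that $[\beta]$ evaluates to zero on classes represented by the binding orbits times intervals.

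The cleaner route is probably the following. Since $X$ has a global cross-section $S$, pick the suspension-type coordinates so that $M = S \times [0,1]/{\sim}$ and $X = \partial_\theta$ up to a positive conformal factor; then $\omega_S \coloneqq$ the area form on $S$ pulls back to a closed two-form $\beta$ on $M$ with $\iota_X\beta = 0$ and $\beta$ strictly positive on $S$ and on every leaf transverse to $X$. Now take the signed partial section $\iota:\Sigma \to M$, say positive; orient it so $X$ is positively transverse to its interior. On the interior of $\Sigma$, $\beta$ pulls back to a \emph{positive} area form (because $X$ is positively transverse and $\beta$ is "positive in the $X$ direction"), so $\int_\Sigma \iota^*\beta > 0$. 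On the other hand, $\partial \Sigma$ maps to periodic orbits of $X$; since $\beta$ is exact, write $\beta = d\gamma$ for some one-form $\gamma$, and then $\int_\Sigma \iota^*\beta = \int_{\partial\Sigma}\iota^*\gamma$. But the binding orbits are integral curves of $X$, and $\beta$ being positive on cross-sections does not immediately control $\int_{\partial\Sigma}\gamma$; so here I would instead use that a closed orbit $\sigma$ of $X$ satisfies $\int_\sigma \tau > 0$ (where $\tau$ is the closed one-form with $\tau(X)>0$), i.e. every closed orbit is positively transverse to the fibers, hence $[\sigma]$ pairs positively with $[S] \in H_2$. Combining: $[\partial \Sigma] = 0$ in $H_1(M;\mathbb{R})$ if $\Sigma$ is a \emph{positive} partial section all of whose boundary components run in the $+X$ direction — but each such component pairs \emph{strictly positively} with the cohomology class $[\tau]$, so a nontrivial nonnegative combination of them cannot be nullhomologous, contradiction. (For the negative case, reverse all orientations, or note $-\Sigma$ is positive.)

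So the core mechanism I expect to use is: (i) a global cross-section produces a closed one-form $\tau$ with $\tau(X) > 0$, equivalently a cohomology class $[\tau] \in H^1(M;\mathbb{R})$ that is strictly positive on every invariant one-cycle carried by a closed orbit of $X$ traversed in the positive direction; (ii) a signed partial section $\Sigma$ has $[\partial\Sigma]$ equal to a nonzero nonnegative combination of such cycles, yet $\partial\Sigma$ bounds $\Sigma$, so $\langle [\tau], [\partial\Sigma]\rangle = 0$; (iii) these are incompatible unless $\partial\Sigma = \emptyset$, but a partial section by definition has nonempty boundary (otherwise it would be a global or local genuine section, not a "partial" one) — I should double-check the convention in Definition \ref{defi:partialsec}, and if empty boundary is formally allowed then a closed embedded surface transverse to a flow with a global cross-section is itself impossible by the same Stokes/positivity argument applied with $\beta$, so that degenerate case is handled too.

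The main obstacle I anticipate is the bookkeeping around orientations and the precise homological statement: I need that the positivity "$X$ positively transverse to $\mathrm{int}\,\Sigma$" forces each boundary circle (traversed with its induced boundary orientation) to carry a class pairing \emph{nonnegatively} — and at least one \emph{strictly positively} — with $[\tau]$, and simultaneously that $\sum_i [\sigma_i] = 0$ because $\partial \Sigma$ bounds. The subtlety is that the induced boundary orientation on $\partial\Sigma$ is, by the \emph{definition} of "positive partial section", the orientation given by the flow direction of $X$ — so each $\sigma_i$ is literally an orbit traversed forward in time, and $\tau$ evaluated on it is $\int_{\sigma_i}\tau = \int_0^{T_i}\tau(X)\,dt > 0$. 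Summing over $i$ gives $\langle[\tau],[\partial\Sigma]\rangle = \sum_i \int_{\sigma_i}\tau > 0$, while Stokes gives $\langle[\tau],[\partial\Sigma]\rangle = \int_\Sigma d(\iota^*\tau) = \int_\Sigma \iota^*(d\tau) = 0$ since $\tau$ is closed — contradiction, and the argument is complete. The only real work is verifying the existence of $\tau$ from a global cross-section (standard: Tischler / the fibration structure) and being careful that "signed" is used in the strong sense of Section \ref{ss:partial} where \emph{every} boundary component is coherently oriented by the flow.
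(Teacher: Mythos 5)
Your final argument is correct, and it takes a route that differs from the paper's in its packaging. The paper argues in the forward direction with currents: the signed partial section $\Sigma$ integrates to a $2$-current $c$ whose boundary $\partial c$ is a nonzero exact foliation cycle (a positive combination of integration currents over the binding orbits, oriented by the flow), and then Schwartzman's criterion (Theorem \ref{thm:Sch}) is invoked to conclude that no global cross-section exists. You instead argue by contradiction with the elementary half of that circle of ideas: a global cross-section makes $M$ a mapping torus, hence yields a closed one-form $\tau$ with $\tau(X)>0$; pulling $\tau$ back to $\Sigma$ and applying Stokes gives $0=\int_\Sigma \iota^*d\tau=\sum_i\int_{\sigma_i}\iota^*\tau$, while coherent orientation of the boundary by the flow (the definition of ``signed'') forces each $\int_{\sigma_i}\iota^*\tau$ to have a fixed nonzero sign --- a contradiction. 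This is essentially the same duality that underlies Schwartzman's theorem, but your version is self-contained and avoids the language of foliation cycles, at the cost of re-deriving the standard fact ``cross-section $\Rightarrow$ closed $1$-form positive on $X$''; the paper's version is shorter given the cited machinery and fits the current-theoretic framework it has already set up.

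Two small caveats. First, your argument (like the paper's) needs $\partial\Sigma\neq\emptyset$; this is implicit in the notion of partial section with binding orbits, and in all applications in the paper the binding is nonempty, so this is fine --- but your proposed disposal of the empty-boundary case is wrong: a closed embedded surface transverse to the flow is \emph{not} incompatible with a global cross-section (the cross-section itself is such a surface, and your $\beta$ is closed but not exact, so no Stokes contradiction arises). Second, the first half of your write-up (the attempt with an exact fiberwise area form $\beta$) does not work and is correctly abandoned; only the $\tau$-based argument in your last paragraph should be retained.
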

\noindent To establish this result, we need to introduce a fundamental result of Schwartzman's theory of cross-sections \cite{Sc} that uses the language of foliation cycles \cite{Su}. 

\begin{defi}
    A $k$-current is a continuous linear functional defined on $\Omega^k(M)$. We denote by $\mathcal{Z}^k(M)$ the space of $k$-currents in $M$.
\end{defi} 
\noindent There exists a continuous boundary operator
\begin{align*}
\partial: \mathcal{Z}^k(M) &\longrightarrow \mathcal{Z}^{k-1}(M)\\
			c &\longmapsto \partial c,
\end{align*}
 that sends a $k$-current $c$ to the $(k-1)$-current defined as
$$\partial c(\beta):=c(d\beta), \enspace \text{for any } \beta \in \Omega^{k-1}(M).$$ 
A $k$-current $c$ is closed if $\partial c=0$, and exact if there exists a $(k+1)$-current $c'$ such that $\partial c'=c$. The boundary operator satisfies $\partial^2\equiv 0$, so that every exact current is closed. Given a non-vanishing vector field $X$ we define the Dirac $1$-current $\delta_p\in \mathcal{Z}^1(M)$ as
$$\delta_p(\beta)=\beta(X)|_p, \enspace \beta \in \Omega^1(M).$$ 
The closed convex cone generated by Dirac $1$-currents defines the set of foliation currents of $X$. A foliation cycle is simply a closed foliation current. 

Recall as well that an embedded closed surface 
$$\iota: \Sigma \longrightarrow M $$
is a global cross-section of $X$ if $\Sigma \pitchfork X$ and $\Sigma$ intersects every orbit. Schwartzman characterized those flows that admit a global cross-section in terms of their exact foliation cycles.

\begin{theorem}[Schwartzman \cite{Sc}]\label{thm:Sch}
Let $X$ be a non-vanishing vector field on a closed manifold $M$. Then $X$ admits a global cross-section if and only if $X$ does not admit any exact foliation cycle.
\end{theorem}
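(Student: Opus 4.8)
The plan is to prove Theorem~\ref{thm:Sch} by routing both implications through a single intermediate object: a smooth closed $1$-form $\eta$ with $\eta(X)>0$ everywhere. I would work in the duality between the Fréchet space $\Omega^1(M)$ and its dual $\mathcal{Z}^1(M)$ with the weak-$*$ topology, under which the continuous functionals on $\mathcal{Z}^1(M)$ are exactly the smooth forms. Fix a Riemannian metric and set $\beta_0=g(X,\cdot)$, so $\beta_0(X)=|X|^2>0$ on the compact $M$ and $\langle \delta_p,\beta_0\rangle=\beta_0(X)|_p>0$ for every $p$. Normalizing, the set $D=\{\delta_p/\beta_0(X)|_p : p\in M\}$ is the continuous weak-$*$ image of the compact $M$, hence weak-$*$ compact, and lies in the affine hyperplane $\{\langle\,\cdot\,,\beta_0\rangle=1\}$. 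Its closed convex hull $K=\overline{\mathrm{co}}(D)$ is then a weak-$*$ compact convex base of the foliation cone $\mathcal{C}$, so $\mathcal{C}=\mathbb{R}_{\geq 0}\cdot K$ is closed and $0\notin K$. Foliation cycles are taken to be nonzero, as the trivial current is vacuously exact.

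For the easy implication (a global cross-section forbids nonzero exact foliation cycles), I would first extract from the cross-section $\Sigma$ a closed $1$-form $\eta$ with integral periods and $\eta(X)>0$: since $\Sigma$ is transverse to $X$ and meets every orbit on the compact $M$, the normalized position of a point between its previous and next intersections with $\Sigma$ defines a smooth map $f:M\to\mathbb{R}/\mathbb{Z}$ that strictly increases along $X$, and $\eta=f^*d\theta$ works (the flow version of Tischler's theorem). If $c=\partial c'$ is an exact foliation cycle, then $\langle c,\eta\rangle=\langle c',d\eta\rangle=0$ because $\eta$ is closed; but $\eta(X)\geq m>0$ forces $\langle c,\eta\rangle\geq (m/\max\beta_0(X))\,\langle c,\beta_0\rangle>0$ for every nonzero $c\in\mathcal{C}$, since the weak-$*$ continuous functional $\langle\,\cdot\,,\eta\rangle$ is bounded below on $D$ and hence on $K$. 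This contradiction rules out nonzero exact foliation cycles.

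The hard implication is the separation argument. Assume $X$ has no nonzero exact foliation cycle, i.e.\ $\mathcal{C}\cap B^1=\{0\}$, where $B^1=\partial\,\mathcal{Z}^2(M)$ is the space of exact $1$-currents; equivalently $K\cap B^1=\emptyset$. The decisive point is that $B^1$ is weak-$*$ closed, which I would establish by identifying it with an annihilator: by de Rham's theorem for currents a closed $1$-current with trivial homology class is exact, and a current killing all closed smooth $1$-forms is automatically closed, so $B^1=\{c\in\mathcal{Z}^1(M): c(\omega)=0 \ \text{for all closed } \omega\in\Omega^1(M)\}$ is the annihilator of the closed smooth $1$-forms and is therefore weak-$*$ closed. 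Applying Hahn--Banach separation to the weak-$*$ compact convex set $K$ and the disjoint weak-$*$ closed subspace $B^1$ yields a weak-$*$ continuous functional, i.e.\ a smooth $1$-form $\eta$, vanishing on $B^1$ and strictly positive on $K$. Vanishing on $B^1=\partial\mathcal{Z}^2(M)$ means $\langle c',d\eta\rangle=0$ for all $c'\in\mathcal{Z}^2(M)$, hence $d\eta=0$; positivity on $K$ gives $\eta(X)|_p>0$ for every $p$ by testing against $\delta_p/\beta_0(X)|_p\in K$. Thus $\eta$ is closed with $\eta(X)>0$.

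It remains to upgrade $\eta$ to integral periods and extract a cross-section. The set of classes in $H^1(M;\mathbb{R})$ admitting a closed representative $\eta'$ with $\eta'(X)>0$ is an open convex cone (the condition is $C^0$-open and adding a small exact form preserves it), and is nonempty by the previous paragraph, so it contains a rational and, after scaling, an integral class. The corresponding closed form integrates to a map $g:M\to S^1$ with $g(X)>0$, whose regular fibers are embedded closed hypersurfaces transverse to $X$ and, as $g$ wraps around $S^1$ along every orbit, meet every orbit — a global cross-section. I expect the main obstacle to be the functional-analytic bookkeeping: checking that $K$ is genuinely weak-$*$ compact so that Hahn--Banach applies, and that the annihilator description of $B^1$ is valid; this is exactly where de Rham's theorem for currents and the compactness of $M$ enter, while the remaining steps are soft.
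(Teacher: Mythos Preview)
The paper does not contain its own proof of Theorem~\ref{thm:Sch}: it is stated as a known result of Schwartzman \cite{Sc} and immediately applied to prove Proposition~\ref{prop:obsGCS}. There is therefore no proof in the paper to compare your attempt against.

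That said, your outline is the standard Sullivan-style argument and is essentially correct. The reduction to finding a closed $1$-form positive on $X$, the Hahn--Banach separation of the compact base $K$ from the weak-$*$ closed subspace $B^1$ (identified as the annihilator of closed forms via de Rham for currents), and the perturbation to integral periods are all sound. One small point to tighten: the construction of the smooth map $f:M\to\mathbb{R}/\mathbb{Z}$ from the cross-section $\Sigma$ is cleaner if you first observe that a global cross-section exhibits $M$ as the mapping torus of the first-return map, so that the projection to $S^1$ is automatically smooth; your ``normalized position'' description is morally the same but the smoothness across $\Sigma$ deserves a word. Also, once $\eta(X)>0$ the map $g:M\to S^1$ is a submersion, so every fiber is regular and is a cross-section; you need not single out regular fibers.
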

With this theorem we are ready to prove Proposition \ref{prop:obsGCS}.

\begin{proof}[Proof of Proposition \ref{prop:obsGCS}]
Let $X$ be a non-vanishing vector field in $M$, and assume that there is a signed partial section 
$$\iota:\Sigma \longrightarrow M,$$
which we orient by the only orientation that makes $X$ positively transverse to its interior.
This partial section defines a $2$-current $c_{\Sigma}$ by integration $c_{\Sigma}(\eta)=\int_\Sigma \eta$, where $\eta \in \Omega^2(M)$. We define $c$ as $c_{\Sigma}$ if $\Sigma$ is a positive partial section, or as $-c_{\Sigma}$ if $\Sigma$ is a negative partial section. Recall that the boundary of $\Sigma$ is a finite cover of a collection of periodic orbits $\gamma_1,...,\gamma_k$ oriented by the direction of $X$ (or by the reversed orientation if $\Sigma$ is a negative partial section). The current $\partial c$ is defined by integration as well, acting on a one-form $\beta$ by
$$\partial c (\beta)= \sum_{i=1}^k k_i \int_{\gamma_i} \beta, \quad k_i\in \mathbb{Z}^+.$$
By construction, the current $\partial c$ is an exact cycle, and notice that the fact that $\gamma_i$ is positively tangent to $X$ implies that $\partial c$ is a foliation current of $X$. By Theorem \ref{thm:Sch} the flow $X$ does not admit a global cross-section.
\end{proof}

\section{Partial sections in stable hypersurfaces}\label{s:addpartial}

The goal of this section is to show how to deform a stable hypersurface into one admitting positive and negative partial sections that persist under perturbations. We first show how to introduce positive and negative rational half-twisting regions in a $T^2$-invariant region $U\cong T^2\times I$, thus creating positive and negative partial sections by Lemma \ref{lem:htw}. We further perturb the hypersurface to make these sections $\partial$-strong and the binding orbits non-degenerate, making them robust.

\subsection{Half-twisting regions}
To introduce rational half-twisting regions in the characteristic foliation of a stable hypersurface, we use the fact that a region where $(\lambda,\omega)$ is $T^2$-invariant admits a distinguished symplectization, which corresponds to a neighborhood of that region in the ambient symplectic manifold $(W,\Omega)$. This symplectization is well-suited for $T^2$-invariant embeddings, see an example of these embeddings in \cite[Section 3.11] {CV}.

\begin{prop}\label{prop:SHSht}
Let $M \subset (W,\Omega)$ be a closed stable hypersurface in a four-dimensional symplectic manifold and $\omega$ its induced Hamiltonian structure. Assume that $\omega$ admits a stabilizing one-form $\lambda$ such that $d\lambda=f\omega$ with $f$ non-constant. Then there exists an arbitrarily $C^0$-close embedded stable hypersurface, stable isotopic to $M$, that admits a positive and a negative rational half-twisting integrable region where an associated stable Hamiltonian structure is $T^2$-invariant. 
\end{prop}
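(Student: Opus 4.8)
The plan is to work locally near a point where $f = d\lambda/\omega$ has nonzero differential, find a small region where the stable Hamiltonian structure is $T^2$-invariant, and modify it there so that the slope function acquires a positive and a negative rational half-twist. First, since $f$ is a first integral of the Reeb field $X$ and $df\neq 0$ somewhere, pick a regular value $c$ of $f$ near which $df\neq 0$; the level set $f^{-1}(c)$ is a compact surface invariant under $X$. A connected component $L$ of it carries the restricted flow of $X$, which is volume-preserving on a surface, hence has a rational or irrational rotation number; by an arbitrarily $C^0$-small perturbation of the hypersurface (adjusting $\omega$ and $\lambda$ inside a tubular neighborhood of $L$, using Proposition \ref{prop:param} to keep things stable) I can arrange that some nearby invariant level set is a two-torus on which $X$ has a \emph{rational} slope, and that a whole collar $U\cong T^2\times I$ around it is foliated by such invariant tori — i.e. $U$ is an integrable region. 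On $U$ I may then invoke the normal forms \eqref{eq:wT2} and \eqref{eq:lam}, so that after a further homotopy of stabilizing one-forms relative to $\partial U$ the pair $(\lambda,\omega)$ is $T^2$-invariant, described by plane curves $H=(h_1,h_2)$ and $G=(g_1,g_2)$, with the slope function $k_U(r) = (h_1'(r),h_2'(r))/|(h_1'(r),h_2'(r))|$.

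Next, the heart of the construction: deform $H$ (equivalently $\omega$) inside $U$, relative to the boundary, so that its derivative $H'$ — whose direction is exactly the slope — sweeps out first a full clockwise half-turn between two rational directions and then a full counterclockwise half-turn between two rational directions, while remaining nonvanishing so that $\omega$ stays a genuine Hamiltonian structure. Concretely, shrink $U$ to three subintervals: on the outer two keep $H=H_0$ (so the deformation is relative to $\partial U$), and on the middle interval replace $H$ by a curve whose velocity vector executes the desired pair of half-twists between rational slopes. This produces a positive rational half-twisting region and a negative rational half-twisting region inside $U$. Since the deformation of $\omega$ is $T^2$-invariant and relative to the boundary, and since on the relevant subintervals I can keep the slope non-constant (or locally constant near the boundary of each piece), Proposition \ref{prop:param} provides a homotopy of $T^2$-invariant stable Hamiltonian structures $(\lambda_t,\omega_t)$ with $(\lambda_1,\omega_1)$ the deformed pair and $(\lambda_0,\omega_0)$ the original; gluing this to the earlier homotopy and translating everything back to hypersurfaces via the symplectization model (the $T^2$-invariant embeddings of \cite[Section 3.11]{CV}, which realize any $C^0$-small $T^2$-invariant change of $(\lambda,\omega)$ on $U$ as a $C^0$-small, stably isotopic change of the hypersurface inside $W$) yields the desired stable hypersurface $\tilde M$, $C^0$-close and stable isotopic to $M$, admitting a positive and a negative rational half-twisting integrable region.

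The main obstacle I expect is the first step: producing an \emph{integrable} region $T^2\times I$ — a whole collar of invariant two-tori — out of the single hypothesis that $f$ is non-constant. A priori $df\neq 0$ only gives one invariant surface, and its component need not be a torus, nor need nearby level sets of $f$ be tori, nor need the flow on them be tangent to a torus fibration. The point is that $f$ itself need not be the source of the integrability after perturbation: one should instead use that near a regular level set the ambient picture is a trivial fibration $f^{-1}((c-\epsilon,c+\epsilon))\cong F\times(c-\epsilon,c+\epsilon)$ with $X$ tangent to the fibers $F\times\{\cdot\}$, so the flow is already "integrable" in the weak sense of being tangent to a codimension-one foliation; a $C^0$-small perturbation supported here, chosen so that on one fiber the surface flow has a rational periodic orbit with a nearby invariant sub-annulus (available because area-preserving surface flows are structurally simple), upgrades a sub-region to a genuine $T^2\times I$ integrable region with rational slope at one torus. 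Getting this perturbation to be simultaneously $C^0$-small, stable (compatible with a homotopy of stabilizing one-forms), and to land in the $T^2$-invariant normal form will be the technical crux; once it is in place, the rest is the elementary plane-curve manipulation of the half-twists together with the already-cited propagation lemmas from \cite{CV}.
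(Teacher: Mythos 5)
Your overall shape is right (work in a $T^2$-invariant region, make the slope function perform a positive and a negative rational half-twist, use the extension results of \cite{CV} to keep a stabilizing form), but the step you treat as a citable black box is exactly the crux, and as stated it is not true. You propose to deform the plane curve $H=(h_1,h_2)$ (equivalently $\omega$) directly, rel boundary, so that $H'$ makes the desired half-turns, and then to ``translate everything back to hypersurfaces'' by claiming that any $C^0$-small $T^2$-invariant change of $(\lambda,\omega)$ on $U$ is realized by a $C^0$-small, stably isotopic change of the hypersurface. But the induced Hamiltonian structure is not freely prescribable: your modified $\omega$ is a \emph{large} ($C^1$-large) perturbation of the original, so the only realization result available (Lemma \ref{lem:app}, which needs the new structure to be $C^\infty$-close and cohomologous) does not apply, and \cite[Section 3.11]{CV} does not assert what you need either. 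Concretely, a hypersurface $C^0$-close to $M$ inside the symplectization neighborhood $(U\times(-\varepsilon,\varepsilon),\,\omega+d(t\lambda))$ which is the torus-invariant graph over a curve $s\mapsto(f_1(s),f_2(s))$ induces the structure corresponding to the plane curve $s\mapsto h(f_1(s))+f_2(s)\,g(f_1(s))$; hence the attainable curves are constrained to lie along the lines $h(r)+\mathbb{R}\,g(r)$, and at vertical tangencies of $(f_1,f_2)$ the induced slope is forced to be $\pm(g_2(a),-g_1(a))/|g(a)|$, i.e.\ dictated by the stabilizing form $\lambda$, not by your choice of ``two rational directions''. This is why the actual proof first homotopes $\lambda$ (adding primitives of $\rho h_1',\rho h_2'$ with $\int\rho h_i'=0$) so that $g(a)$ becomes rational, then builds the new hypersurface directly as such a graph with four alternating vertical tangencies and verifies, via a lifting/degree argument on the slope, that both a positive and a negative rational half-twisting region must appear. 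Your proposal skips both the rationality normalization of $\lambda$ and any argument that your prescribed $H$ is induced by an embedding at all; without these, the construction does not close.

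A secondary, smaller point: the ``main obstacle'' you anticipate in the first step is not there. Since $f=d\lambda/\omega$ is a non-constant first integral of the non-vanishing Reeb field, Sard gives a regular value, each nearby regular level component is a closed surface tangent to a non-vanishing vector field and cooriented by $\nabla f$, hence a torus, and a whole collar of such tori is an integrable region with no perturbation needed; \cite[Theorem 3.3, Lemma 3.9]{CV} then put $(\lambda,\omega)$ in $T^2$-invariant normal form there. Your proposed rotation-number perturbation scheme for this step is both unnecessary and, as sketched, far less precise than the rest of your argument.
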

\begin{proof}
We will first show the existence of a $C^0$-close isotopic stable hypersurface satisfying the conclusions, and argue at the end that the isotopy can be chosen to be stable. Since $f$ is non-constant, there exists a domain $U\cong T^2\times I$ where $(\lambda,\omega)$ is $T^2$-invariant, see Section \ref{ss:inteT2}. Namely, there exist coordinates $(x,y,r)$ of $U$ where $\lambda$ takes the form
$$ \lambda= g_1(r)dx + g_2(r)dy$$
and $\omega$ has the expression
$$\omega=h_1'(r)dr\wedge dx + h_2'(r) dr\wedge dy,$$ 
where $g_1,g_2,h_1,h_2$ are smooth functions defined from $[0,1]$ to $\mathbb{R}$. Furthermore, we can choose $U$ such that $f|_U\neq 0$ and hence that $d\lambda$ is non-vanishing everywhere. Up to taking a small interval inside $[0,1]$ and restricting to it, we can also assume that the slope function of the Reeb field of $(\lambda,\omega)$ has its image in a small neighborhood of a point in $S^1$. If possible, we fix some $\delta$ for which the slope of $\omega$ is constant in $[0,\delta]\cup [1-\delta,1]$. Otherwise, and up to shrinking $U$, we fix $\delta$ such that the slope is not constant in $[1-2\delta, 1-\delta]$. Either of these conditions will be sufficient to apply Proposition \ref{prop:param} later.\\

We claim that, up to homotoping $\lambda$ to another stabilizing one-form of $\omega$, we can assume that the vector $g(r)=(g_1(r),g_2(r))\in \mathbb{R}^2$ is rational for some $r=a\in (\delta, 1-2\delta)$. If the slope of $g(r)$ is constant, then $g(r)=c(r)g_0$ for some non-zero vector $g_0=(a,b)$ and some positive function $c(r)$. Consider a non-zero function $\rho:I\rightarrow \mathbb{R}$, vanishing for $r\in [0,\delta]\cup [1-2\delta,1]$, and satisfying $\int_0^1 \rho(r)h_1'(r)dr=\int_0^1 \rho(r) h_2'(r)dr=0$. Such a function exists since it is the zero level set of the linear functional $\mathcal{F}:C_c^\infty([0,1])\rightarrow \mathbb{R}^2$ given by $\mathcal{F}(f)=(\int_0^1 f(r)h_1'(r)dr,\int_0^1 f(r)h_2'(r)dr)$ is necessarily infinite-dimensional. Let $\tilde g_1(r)$ and $\tilde g_2(r)$ be primitives of $\rho h_1'$ and $\rho h_2'$ respectively, both vanishing for $r=0$, and define 
$$\tilde \lambda= \lambda + \tilde g_1 dx + \tilde g_2 dy. $$
Taking $\rho$ small enough, it is clear that $\tilde \lambda\wedge \omega>0$. In addition, we have $d\tilde \lambda= d\lambda + \rho \omega$, and hence $\tilde \lambda$ is a stabilizing one-form of $\omega$ (which we can connect to $\lambda$ by a linear homotopy of stabilizing one-forms). We claim that $g+\tilde g$ has non-constant slope, where $\tilde g=(\tilde g_1,\tilde g_2)$. The derivative of $\langle g+ \tilde g, g_0^{\perp}\rangle$ with respect to $r$ is $\rho(r) \langle h'(r) , g_0^\perp \rangle$, where $g_0^{\perp}=(-b,a)$ and $h'(r)=(h_1'(r),h_2'(r))$. But $\lambda\wedge \omega>0$ implies that $\langle h', g\rangle \neq 0$, and thus $\langle g+ \tilde g, g_0^{\perp}\rangle$ is non-constant. Hence $g+\tilde g$ is somewhere non-parallel to $g_0$, which shows that its slope is non-constant and in particular it is rational somewhere in $(\delta,1-2\delta)$ as we wanted. To avoid overloading the notation, we keep the notation $\lambda=g_1(r)dx+g_2(r)dy$ for the stabilizing one-form for which $(g_1(a),g_2(a))$ is rational.

Consider a neighborhood of $U$ in $(W,\Omega)$ identified with $V=U\times (-\varepsilon,\varepsilon)$ equipped with the symplectic form
\begin{align*}
\Omega_U&= \omega + d(t\lambda)\\
	   &= (h_1'+tg_1')dr\wedge dx + (h_2'+tg_2')dr\wedge dy + g_1 dt\wedge dx + g_2 dt\wedge y,
\end{align*}
where $t$ is the coordinate in $(-\varepsilon,\varepsilon)$. In this neighborhood we have $M|_V=U\times \{0\}$, i.e. $M|_V$ can be seen as the embedding $\iota: U \longrightarrow U\times \{0\}\subset V$ and the Hamiltonian structure $\omega$ is given by the pullback form $i^*\Omega_U$. Consider an embedding of the form
\begin{align}\label{eq:emb}
\begin{split}
F: T^2\times I \longrightarrow T^2\times I \times (-\varepsilon,\varepsilon)\\
	(x,y,s) \longmapsto (x,y, f_1(s), f_2(s)),
\end{split}
\end{align}
where $f=(f_1,f_2):I \rightarrow I\times (-\varepsilon,\varepsilon)$ is an embedding equal to $(s,0)$ near $s=0$ and near $s=1$, and such that $f((0,1))\subset (0,1)\times (-\varepsilon,\varepsilon)$. 

We choose the curve $f$ to be positively tangent to the vertical line $r=a$ for some $s_1\in I$, negatively tangent to the vertical line $r=a$ for some $s_2\in I$, positively tangent again for some $s_3$ and finally negatively tangent for some $s_4$ such that $0<s_1<s_2<s_3<s_4<1-2\delta$. We can impose as well $f_1'(s_i)=0$, $f_2'(s_1)=f_2'(s_3)=1$ and $f_2'(s_2)=f_2'(s_4)=-1$. See Figure \ref{fig:curv} for an example of such a curve. Notice that $f$ can be chosen arbitrarily $C^0$-small.

\begin{figure}[!ht]
\begin{center}
\begin{tikzpicture}
     \node[anchor=south west,inner sep=0] at (0,0) {\includegraphics[scale=0.4]{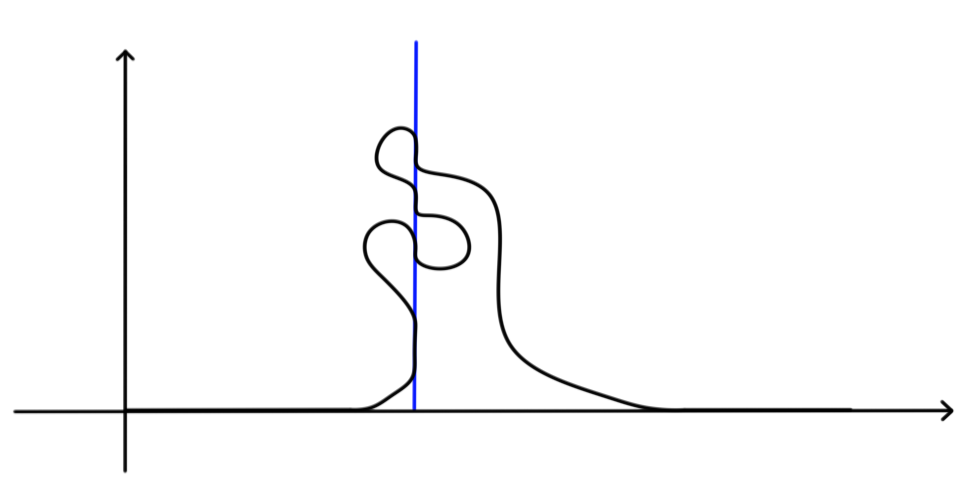}};
     \node[color=blue, scale=0.9] at (4.4,0.4) {$r=a$};
     \node[scale=1] at (10,0.4) {$r$};
     \node[scale=1] at (1,4.5) {$t$};
     
     \end{tikzpicture}
\caption{The curve $f:I\longrightarrow I\times [0,\varepsilon)$.}
\label{fig:curv}
\end{center}
\end{figure}

The Hamiltonian structure induced by the embedding $F$ is given by
\begin{align*}
\omega_F=F^*\Omega_U &= f_1'(s)\Big( (h_1'(f_1)+f_2 g_1'(f_1)) ds\wedge dx + (h_2'(f_1)+f_2 g_2'(f_1)) ds\wedge dy\Big) \\
&+ f_2'(s) \Big( g_1(f_1) ds\wedge dx + g_2(f_1) ds\wedge dy \Big)\\
&= H_1(s) ds\wedge dx + H_2(s) ds \wedge dy,
\end{align*}
where 
$$\begin{cases}
    H_1(s)= f_1'(s)h_1'(f_1(s))+f_1'(s)f_2(s)g_1'(f_1(s))+f_2'(s)g_1(f_1(s)),\\
    H_2(s)=f_1'(s)h_2'(f_1(s))+f_1'(s)f_2(s)g_2'(f_1(s))+f_2'(s)g_2(f_1(s)).
\end{cases}.$$
The slope function of $\omega_F$ is
$$k_U(s)=\frac{(H_2(s),-H_1(s))}{|(H_1,H_2)|},$$
and by construction
$$k_U(s_1)=k_U(s_3)=\frac{(g_2(a),-g_1(a))}{|(g_2(a),g_1(a))}, \quad k_U(s_2)=k_U(s_4)=\frac{(-g_2(a),g_1(a))}{|(-g_2(a),g_1(a))|}.$$ This shows that the slope of the kernel of $\omega_F$ visits respectively a rational point $q=k_U(s_1)$ in $S^1$, its image by the antipodal map $-q$, then $q$ and $-q$ again as $s$ runs from $0$ to $1$. Let us show that this implies that $\omega_F$ has a positive and a negative half-twisting region in $T^2\times I$.

The immersed curve $f$ is homotopic through immersed curves to the initial immersion $G=(s,0)$ relative to the boundary, so the slope function of the kernel of $\omega_F$ is homotopic relative to the endpoints to the slope function $k_0(s)$ of the Reeb field of $(\lambda,\omega)$, whose image is contained in a very small interval inside $S^1$. Take the covering map $\pi: \mathbb{R}\rightarrow S^1$ with $\pi(\rho)=e^{i\rho}$, and let $\tilde k$ and $\tilde k_0$ be respectively the lifts of $k_U$ and $k_0$. The fact that $k_U$ and $k_0$ are homotopic implies that $\tilde k(1)-\tilde k(0)=\tilde k_0(1)-\tilde k_0(0)=\theta_0$, where by assumption $|\theta_0|<<1$.

As $s$ runs through from $s_1$ to $s_2$, the image of the slope function goes from $q$ to $-q$, and thus the image of $k_U$ contains one of the half-circles between $q$ and $-q$. We assume it to be $(q,-q)$, i.e. the interval that goes from $q$ to $-q$ in the clockwise direction. The other case can be treated by analogous arguments to the ones we will use.  This implies that there is necessarily an interval $[a_1, a_2]\subset [s_1,s_2]$ such that $k_U(a_1)=q, k_U(a_2)=-q$ and $k_U(s)\in (q,-q)$, i.e. $T^2\times [a_1,a_2]$ is a rational positive half-twisting region of $\omega_F$. For instance choose $a_2=s_2$ and choose $a_1$ to be the largest value in $[s_1,s_2]$ such that $k_U(a_1)=-q$. 

We will assume now that there is no negative rational half-twisting region of $\omega_F$ in $T^2\times I$ and reach a contradiction. The fact that there is no negative rational half-twisting region implies that for any two values $b_1,b_2\in I$ with $b_1 \leq b_2$ and such that $k_U(b_1)$ or $k_U(b_2)$ is rational, we have 
\begin{equation}\label{eq:half}
    \tilde k(b_2)- \tilde k(b_1)> -\pi.
\end{equation}
 As $k_U(s_1)$ and $k_U(s_2)$ are rational and antipodal, we have $\tilde k(s_2)\geq \tilde k(s_1)+\pi$. Analogously, we have $\tilde k(s_3)\geq \tilde k(s_2)+\pi$ and $\tilde k(s_4)\geq \tilde k(s_3)+\pi$. Hence $\tilde k(s_4)\geq k(s_1)+3\pi$. Using that $\tilde k(1)-\tilde k(s_4)> -\pi$ and that $\tilde k(s_1)-\tilde k(0)> -\pi$, we conclude that $\tilde k(1)-\tilde k(0)> \pi$, which is a contradiction with the fact that $\tilde k(1)-\tilde k(0)$ should be equal to $\theta_0$ with $|\theta_0|<<1$. We conclude that there is a negative rational half-twisting region in $T^2\times I$.\\

We define our new embedded hypersurface $M_F$ as a hypersurface equal to $M$ away from $V$, and equal to $F(T^2 \times I)$ in $V$. Recall that $f$ was arbitrarily $C^0$-small, and thus $M_F$ can be chosen arbitrarily $C^0$-close to $M$. It remains to show that $M_F$ is stable. To see this, notice that the Hamiltonian structure $\omega_F$ coincides with $\omega$ away from the integrable region, hence $\lambda$ is a stabilizing one-form of $\omega_F$ there. In $T^2\times I$, the form $\lambda$ is a stabilizing one-form of $\omega_F$ near the boundary, since there $\omega_F=\omega$ as well. Furthermore, the slope of the kernel of $\omega_F$ is not constant by construction, so by Proposition \ref{prop:CV1} there exists an extension of $\lambda$ in $T^2\times I$ as a stabilizing one-form of $\omega_F$. This shows the existence of the required stable hypersurface.\\

Let us check that $M$ and $M_F$ are stable isotopic. Let $f_\tau$ be a one-parameter family of injective immersions such that $f_0=(s,0)$ and $f_1=f$. They define a family of embeddings $F_\tau$ as in Equation \eqref{eq:emb}. Each $F_\tau$ defines an embedded hypersurface $M_\tau$ which is equal to $M$ except in some region $V= U \times (-\varepsilon,\varepsilon)$, and has an induced $\omega_{F_\tau}$ in $U$. We can choose $F_\tau$ such that $F_\tau(s)=(s,0)$ for $s\in [0,\delta] \cup [1-2\delta,1]$. In particular, either $F_\tau$ has constant slope in $[0,\delta]\cup [1-\delta,1]$ for each $\tau$ or has non-constant slope in $[1-2\delta,1-\delta]\subset [\delta,1-\delta]$ for each $\tau$. We can apply Proposition \ref{prop:param} to obtain a parametric family of stabilizing one-forms $\lambda_\tau$ of $\omega_{F_\tau}$ that extends away from $U$ as a stabilizing one-form of $M_\tau$.
\end{proof}

\subsection{Robust partial sections}\label{ss:deltastrong}
We will now introduce an important feature of a partial section which is known as $\partial$-strongness, and which is needed to construct partial sections that persist (up to small isotopy) for perturbations of the flow, as we prove in Proposition \ref{prop:robpartial} below. We will then prove that robust partial sections can be introduced in the characteristic foliation of stable hypersurfaces by suitable perturbations.\\

Let $\iota:\Sigma \rightarrow M$ be a partial section of a vector field $X$. Let $\gamma$ be a binding orbit of $\Sigma$, and denote by $M_\gamma$ the manifold obtained by replacing $\gamma$ by its unit normal bundle.  We denote the quotient map by $\pi_\gamma:M_\gamma \rightarrow M$. Concretely ${T}_\gamma=\left((TM|_{\gamma}/T\gamma)\setminus 0\right)/ \mathbb{R}^+$, where we identify two vectors $v,w\in T_pM\setminus \{0\}$, where $p$ is a point in $\gamma$, if there exists a positive real number $\lambda$ such that $v-\lambda w\in T_p\gamma$. It defines a circle bundle 
$$p_\gamma:{T}_\gamma \longrightarrow \gamma,$$
and is diffeomorphic to a torus. The image of a boundary component $\sigma$ of $\Sigma$ defines an oriented immersed curve $\widetilde \sigma$ in $T_\gamma$, defined as follows (see e.g. \cite[Section 2.2]{CDHR}). If $\iota(\sigma)=\gamma$, choose any vector field $n$ tangent to $\Sigma$ along $\sigma$ pointing outwards, so that $d\iota(n)$ defines a map from $\sigma$ to $TM|_{\gamma}$. Composing $d\iota(n)$ with the quotient map $\pi_\gamma$ we obtain a map $\tau:\sigma \rightarrow {T}_\gamma$. This map continuously assigns to a point in $\gamma$ a section of the circle bundle $p_\gamma$, defining an immersed curve $\tilde \sigma$.  The multiplicity of $\sigma$ can be defined either by the degree of the map $\iota|_\sigma:\sigma \rightarrow \gamma$, where $\gamma$ is oriented by the flow, or by the degree of the map $\pi_\gamma|_{\widetilde \sigma}:\widetilde\sigma \rightarrow \gamma$, see \cite[Section 2.1]{ABM}. The curve $\tilde \sigma$ is always embedded if the multiplicity is one in absolute value, but could be only immersed if the multiplicity of $\sigma$ is greater than one in absolute value. This happens when at two different points $x,y\in \sigma$ such that $\iota(x)=\iota(y)$, we have $d\iota(n)|_x=d\iota(n)|_y$.  The boundary component $\sigma$ is positive or negative if the multiplicity is positive or negative respectively. 

More generally, if $L$ denotes the collection of periodic orbits of $X$ in the image of $\partial \Sigma$, we can define the blow-up manifold $M_L$ obtained by replacing each periodic orbit with its unit normal bundle. The linearized flow of $X$ determines a smooth flow $\tilde X$ in the boundary of $M_L$. Each orbit $\gamma\in L$ defines a boundary component ${T}_\gamma$ of $M_L$ defined as the unit normal bundle of $\gamma$. Each boundary component $\sigma$ of $\Sigma$ defines an immersed curve $\sigma^*$ in the boundary of $M_L$.
\begin{defi}
The partial section $\Sigma$ is $\partial$-strong if the set of curves $\bigcup_{\sigma \in \partial \Sigma} \sigma^*$ is embedded and transverse to the linearized flow in the boundary of $M_L$.
\end{defi}
Another property is needed for a partial section to be robust: we need that the binding orbits are non-degenerate. Recall that a periodic orbit is non-degenerate if the linearized Poincar\'e return map does not have any root of unity as an eigenvalue. The persistence of a partial section is formalized in the following proposition.

\begin{prop}\label{prop:robpartial}
Let $X$ be a vector field in $M$ that admits a $\partial$-strong partial section $\Sigma$ whose binding orbits are non-degenerate. Then there exists a $C^1$-neighborhood of $X$ in $\mathfrak{X}(M)$ such that every vector field in this neighborhood admits a $\partial$-strong partial section that is isotopic to $\Sigma$ by a $C^1$-small isotopy. The orientation of each of its boundary components is preserved.
\end{prop}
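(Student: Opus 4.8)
The plan is to establish persistence by working in the blow-up manifold $M_L$, where the $\partial$-strong condition becomes a clean transversality statement that survives $C^1$-perturbations. First I would recall that a partial section $\Sigma$ of $X$ is the same data as: (i) the collection $L$ of binding orbits; (ii) the immersed curves $\sigma^* \subset \partial M_L$ which, by the $\partial$-strong hypothesis, form an embedded link transverse to the linearized flow $\tilde X$ on $\partial M_L$; and (iii) the surface $\Sigma$ itself, whose interior is properly embedded in $\operatorname{int}(M_L)$ and transverse to the lift of $X$. The strategy is to reconstruct each of these three pieces for a nearby vector field $Y$, and to show they glue into a $\partial$-strong partial section isotopic to $\Sigma$.

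\medskip

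The steps, in order. \emph{Step 1 (Persistence of binding orbits).} Since each $\gamma \in L$ is non-degenerate, the linearized Poincaré return map has no root of unity as an eigenvalue; in particular $1$ is not an eigenvalue, so by the implicit function theorem applied to the return map on a transversal disk, every $Y$ that is $C^1$-close to $X$ has a unique periodic orbit $\gamma_Y$ near $\gamma$, of the same period up to a small error, and $\gamma_Y$ is again non-degenerate (non-degeneracy is itself a $C^1$-open condition). This produces the new link $L_Y$ and a $C^1$-small isotopy carrying $L$ to $L_Y$, hence an induced diffeomorphism $M_L \to M_{L_Y}$ identifying the boundary tori. \emph{Step 2 (Persistence on the blow-up boundary).} Under this identification the linearized flow $\tilde Y$ on $\partial M_{L_Y}$ is $C^1$-close to $\tilde X$ on $\partial M_L$ (the linearization depends continuously, in $C^1$, on the $1$-jet of the vector field along the orbit, which varies continuously because $\gamma_Y \to \gamma$ in $C^1$). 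Transversality of the embedded link $\bigcup \sigma^*$ to $\tilde X$ is a $C^1$-open, $C^1$-dense-type condition: since $\bigcup \sigma^*$ is compact and transverse, a small perturbation of the flow keeps it transverse, and embeddedness is preserved for free. So $\bigcup \sigma^*$ (pushed forward by the boundary identification) is still an embedded link transverse to $\tilde Y$; in particular the degree of $\sigma^* \to \gamma_Y$ equals the degree of $\sigma^* \to \gamma$, so multiplicities and the positive/negative labels are preserved. \emph{Step 3 (Extending into the interior).} Now I need a surface $\Sigma_Y \subset M_{L_Y}$ with $\partial \Sigma_Y = \bigcup \sigma^*$ and interior transverse to (the blow-up of) $Y$. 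Near $\partial M_L$ the original $\Sigma$ is a product $\sigma^* \times [0,\epsilon)$ inside a collar; take $\Sigma_Y$ to coincide with the same collar-product near the boundary (this uses only the $C^1$-small identification of collars and Step 2). In the interior, $\Sigma$ is a properly embedded compact surface transverse to $X$; transversality to a vector field is $C^1$-open, so for $Y$ $C^1$-close the \emph{same} surface is still transverse to $Y$ away from the boundary. The only issue is matching the fixed collar model near $\partial M_{L_Y}$ with the unchanged interior surface; this is handled by a $C^1$-small isotopy supported in the collar, which one can perform while preserving transversality because both pieces are transverse to $Y$ and the collar is foliated by the flow direction in a controlled way. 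Pushing $\Sigma_Y$ down to $M$ via $\pi_{L_Y}$ gives the desired $\partial$-strong partial section of $Y$, isotopic to $\Sigma$ by a $C^1$-small isotopy (the concatenation of the isotopies from Steps 1 and 3), with boundary orientations preserved by Step 2.

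\medskip

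I expect the main obstacle to be \textbf{Step 3}: carefully gluing the fixed collar model near the blown-up binding orbits to the (essentially unchanged) interior surface while keeping everything transverse to $Y$ and embedded, and doing so with a genuinely $C^1$-small isotopy uniform over the $C^1$-neighborhood. The delicate point is that the collar coordinates on $M_{L_Y}$ depend on choices (a tubular neighborhood of $\gamma_Y$, a trivialization of its normal bundle) that vary with $Y$; one must choose these continuously in $Y$ so that the collar models vary $C^1$-continuously and the interpolation between collar and interior can be made small. A secondary subtlety is ensuring the immersed (not embedded) boundary curves of higher multiplicity behave well: the definition of $\sigma^*$ uses an outward normal field $n$ along $\sigma$ tangent to $\Sigma$, and one must check that the perturbed surface $\Sigma_Y$ induces the same immersed curve $\sigma^*$ on $\partial M_{L_Y}$ up to isotopy, which again follows from $C^1$-closeness of $\Sigma_Y$ to $\Sigma$ near the boundary but deserves explicit mention. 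Everything else — persistence of non-degenerate orbits, openness of transversality, continuity of linearizations — is standard hyperbolic-dynamics and differential-topology input.
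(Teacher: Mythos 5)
Your proof follows essentially the same route as the paper, which simply defers to the argument of \cite[Proposition 5.1]{CDHR} (the Birkhoff section case): persistence of the binding orbits via non-degeneracy, persistence of transversality of the boundary curves to the linearized flow on the blow-up boundary, and $C^1$-openness of transversality for the compact lifted surface, glued back down to $M$. One caveat: your side claim that $\gamma_Y$ is again non-degenerate is false in general (an elliptic binding orbit can acquire root-of-unity eigenvalues under arbitrarily $C^1$-small perturbations), but this is harmless since the conclusion only requires a $\partial$-strong partial section for $Y$, not non-degenerate binding orbits.
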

\begin{proof}
The proof follows from Lemma \ref{lem:obs} and the fact that $\partial$-strong partial sections with non-degenerate binding orbits are robust to $C^1$-perturbations, induced orientations included, which follows exactly from the arguments in \cite[Proposition 5.1]{CDHR} where the case of a Birkhoff section is treated.
\end{proof}

In the previous section, we showed how to construct stable hypersurfaces whose Reeb field admits a positive and a negative rational half-twisting regions where $(\lambda,\omega)$ is $T^2$-invariant. By Lemma \ref{lem:htw}, the Reeb field admits a positive and a negative partial section. The goal of this section is to show that one can improve this into a stable hypersurface whose Reeb field admits a positive and a negative partial section that are $\partial$-strong and whose binding orbits are non-degenerate.

\begin{theorem}\label{thm:SHSdelta}
Let $M$ be a stable hypersurface in $(W, \Omega)$. Let $(\lambda,\omega)$ be a stable Hamiltonian structure induced in $M$. Assume that $(\lambda,\omega)$ admits a rational positive half-twisting $T^2$-invariant region and a rational negatively half-twisting $T^2$-invariant region. Then there exists a stable hypersurface $\tilde M$ that is stable isotopic to $M$, arbitrarily $C^\infty$-close to $M$ and whose characteristic foliation admits a positive and a negative $\partial$-strong partial section with non-degenerate binding periodic orbits.
\end{theorem}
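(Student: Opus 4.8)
The plan is to work entirely inside the two $T^2$-invariant half-twisting regions furnished by the hypothesis, deforming the stable hypersurface there by a further $T^2$-invariant isotopy so that the partial sections produced by Lemma \ref{lem:htw} become $\partial$-strong with non-degenerate binding orbits, and then leave the hypersurface untouched outside. Fix the positive region first, with coordinates $(x,y,r)$ on $U\cong T^2\times I$ in which $\lambda=g_1(r)dx+g_2(r)dy$ and $\omega=h_1'(r)dr\wedge dx+h_2'(r)dr\wedge dy$, and in which (after the argument in the proof of Proposition \ref{prop:SHSht}) the slope $k_U(r)=(h_1'(r),h_2'(r))/|(h_1',h_2')|$ hits a rational value $q=(p_1,p_2)$ at $r=a$ and the antipodal value $-q$ at $r=b$, with $k_U(r)\in(q,-q)$ for $r\in(a,b)$. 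The partial section from Lemma \ref{lem:htw} is $\Sigma=\gamma\times[a,b]$ where $\gamma$ is the closed linear orbit of slope $q$ on $T^2\times\{a\}$; the binding orbits are $\gamma\times\{a\}$ and $\gamma\times\{b\}$.

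First I would arrange the binding orbits to be non-degenerate. On a $T^2$-invariant region a whole $1$-parameter family of parallel closed orbits foliates each torus fiber at the slope values where $k_U$ is rational, so $\gamma\times\{a\}$ and $\gamma\times\{b\}$ are certainly degenerate (every nearby orbit on the same torus is periodic). The remedy is to perform a further $T^2$-invariant deformation of $(\lambda,\omega)$ supported near $r=a$ and $r=b$: replace the linear Reeb field $X=-h_2'(r)\partial_x+h_1'(r)\partial_y$ by a field $X=G(x,y,r)(-h_2'(r)\partial_x+h_1'(r)\partial_y)$ whose conformal factor $G$, restricted to the resonant torus, is chosen so that the return map on the torus fiber becomes a nondegenerate (e.g.\ Morse) circle diffeomorphism with exactly two closed orbits, one of which we keep as the new binding orbit. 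Concretely, since the hypersurface $\tilde M$ may be realized by a $T^2$-invariant embedding into the symplectization $\Omega_U=\omega+d(t\lambda)$ exactly as in \eqref{eq:emb}, I would perturb the embedding curve $f$ near the parameters $s$ where it is tangent to $r=a$ and $r=b$ so that the induced $\omega_F$, while still $T^2$-invariant (hence linear fibers) away from those tori, acquires a small $(x,y)$-dependence precisely on the two resonant tori that makes the restricted flow Morse--Smale on the circle of directions; this kills the one-parameter family and leaves a single transverse pair of closed orbits. The two that survive can be taken to have multiplicity $\pm1$ so the associated curves $\sigma^*$ in the blown-up torus are embedded.

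Next I would make the sections $\partial$-strong. After blowing up a binding orbit $\gamma$ to its unit normal bundle $T_\gamma\cong T^2$, the linearized flow $\tilde X$ on $T_\gamma$ is (a reparametrization of) a linear flow, and the curve $\sigma^*=\widetilde\sigma$ carried by the boundary of $\Sigma$ is, for the naive product $\Sigma=\gamma\times[a,b]$, a constant section of $p_\gamma$ — i.e.\ an orbit of $\tilde X$ at the resonant slope, hence tangent rather than transverse to $\tilde X$, so $\Sigma$ fails to be $\partial$-strong. To fix this I would bend the section near its boundary: replace $\Sigma=\gamma\times[a,b]$ by a surface that near $r=a$ and $r=b$ spirals, i.e.\ is of the form $\{(x,y)\in\gamma_\phi(r)\}\times[a,a+\epsilon]$ where $\gamma_\phi(r)$ is the linear curve $\gamma$ translated in the transverse direction by a function that, as $r\to a$, winds a controlled amount. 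The outward normal $n$ to this bent surface along the boundary then traces a curve $\widetilde\sigma$ in $T_\gamma$ whose tangent direction is everywhere different from the direction of $\tilde X$, i.e.\ transverse; a short computation with the linear model shows the relevant slopes differ because the half-twisting condition forces $k_U$ to approach the resonant value strictly monotonically from one side. One checks the two boundary curves $\sigma^*$ of the positive section (and likewise of the negative one) remain disjoint and keep their induced orientations. Repeating verbatim in the negative half-twisting region produces the negative $\partial$-strong section.

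Finally I would verify that all of these deformations can be packaged as a \emph{stable} isotopy. Each step is a $T^2$-invariant homotopy of $(\lambda,\omega)$ in $U$ (or in the negative region), fixed near $\partial U$, and $C^\infty$-small; by Proposition \ref{prop:param} — using, as in the proof of Proposition \ref{prop:SHSht}, that either the slope is constant near the ends of $U$ or non-constant on an interval near an end for all parameters — each such homotopy of Hamiltonian structures is covered by a homotopy of $T^2$-invariant stabilizing one-forms agreeing with $\lambda$ near $\partial U$, which then glues to the unchanged $\lambda$ on $M\setminus U$. Composing the (finitely many) stable homotopies by the gluing remark in Section \ref{ss:SHS} gives the desired stable isotopy from $M$ to $\tilde M$, and taking all perturbations small makes $\tilde M$ arbitrarily $C^\infty$-close to $M$. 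The main obstacle I anticipate is the $\partial$-strongness step: one must simultaneously keep the section's interior transverse to $X$, make its boundary trace a curve in the blown-up torus that is genuinely transverse (not merely immersed-generic) to the \emph{linearized} flow, preserve the sign of the boundary orientation, and do all of this by a deformation small enough that Proposition \ref{prop:param} still applies and the half-twisting (hence the positivity/negativity of the section) is not destroyed — it is the interaction between the blow-up picture and the resonant linear dynamics that requires care.
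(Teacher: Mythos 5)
Your overall architecture (work inside the two $T^2$-invariant half-twisting regions, fix the sections from Lemma \ref{lem:htw}, and control stability with the parametric extension results) is reasonable, but the non-degeneracy step has a genuine gap. First, multiplying the Reeb field by a conformal factor $G(x,y,r)$ does not change its orbits at all, so it cannot turn the circle of parallel closed orbits on the resonant torus into a Morse--Smale circle dynamics; to move the orbits you must change the direction field, i.e.\ give $\omega$ genuine $(x,y)$-dependence, which a $T^2$-invariant embedding of the form \eqref{eq:emb} never produces. More importantly, even if you did make the on-torus return map Morse while keeping the nearby tori $\{r=\mathrm{const}\}$ invariant (as your plan explicitly does, ``still $T^2$-invariant away from those tori''), the Poincar\'e return map of the binding orbit preserves the foliation by tori, so in coordinates (torus direction, $r$-direction) its linearization is triangular with the eigenvalue $1$ in the $r$-direction: the binding orbit remains degenerate. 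Non-degeneracy forces you to destroy the invariant-torus structure transversally to the resonant torus, and once you do that your only stability mechanism, Proposition \ref{prop:param}, no longer applies because the perturbation is not $T^2$-invariant. The paper resolves exactly this point by first homotoping the stabilizing one-form (via Proposition \ref{prop:CV1}) so that it equals the closed form $\pm d\theta$ near the binding tori; then \emph{any} sufficiently small perturbation of $\omega$ compactly supported near the binding orbit is automatically stabilized by the same one-form, and non-degeneracy is obtained by an arbitrarily small exact perturbation supported near the orbit using Robinson's lemma. This closed-local-stabilizer idea is the missing ingredient in your plan.

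Your $\partial$-strongness step also rests on an unjustified claim. You keep the flow and bend the section, arguing that ``the half-twisting condition forces $k_U$ to approach the resonant value strictly monotonically from one side.'' It does not: the definition only requires $k(r)\in(k(a),k(b))$ on the open interval, so the shear $h_1''$ may vanish at the binding torus. In that degenerate case the linearized flow on the blow-up torus is $\partial_\theta$ (the paper's $b\equiv 0$ when $h_1''(\delta)=c=0$), and the boundary trace of any multiplicity-one section is a graph $\phi=u(\theta)$ with $u$ periodic, so $u'$ vanishes somewhere and transversality to $\partial_\theta$ is impossible: bending the section alone cannot produce $\partial$-strongness. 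You would either have to first perturb the Hamiltonian structure $T^2$-invariantly so the slope turns strictly at the binding tori (a variant the paper mentions in a remark), or do what the paper does: keep the straight section $\{\varphi=\pi\}$ and perturb the flow near the binding orbit by an exact modification of $\omega$ whose kernel gains a term $F(\rho)\partial_\phi$ with $F\equiv c<0$ near $\rho=0$, which yields $b(\theta,\phi)=c\sin^2\phi+(h_1''(\delta)+c)\cos^2\phi<0$ and hence transversality of the extended flow to the extended section, while the closed stabilizing form $d\theta$ keeps the structure stable. So the statement is provable, but not along the route you describe without these repairs.
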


\begin{proof}
We will show that there exists a $C^\infty$-close stabilizable Hamiltonian structure $\hat \omega$ that is arbitrarily $C^\infty$-close to $\omega$ and satisfies $[\hat \omega]=[\omega]$ in $H^2(M,\mathbb{R})$ and whose kernel satisfies the desired properties. If this is proven, then by Lemma \ref{lem:app} in the appendix, the Hamiltonian structure $\hat \omega$ is given by the restriction of an embedding of $M$ in $(W,\Omega)$ that is $C^\infty$-close and isotopic to the original embedding. We will justify at the end of the proof that this isotopy is stable.\\

Let us work with the positive rational half-twisting region, and the negative one is treated analogously. By assumption there is an integrable region $U=T^2\times I$ with coordinates $(\theta,\varphi,r)$ where $\omega$ is of the form
$$\omega=d\alpha= h_1'(r)dr\wedge d\theta +h_2'(r)dr\wedge d\varphi, $$
and $\lambda$ is of the form
$$ \lambda=g_1(r)d\theta+ g_2(r)d\varphi. $$
We also know that the slope function, which in coordinates can be written as
$$k=\frac{(h_1'(r),h_2'(r))}{\norm{(h_1'(r),h_2'(r)}}:I\longrightarrow S^1,$$
half-twists positively. To simplify, assume that we did choose $\theta,\varphi$ such that for some small $\delta>0$ we have
\begin{equation}\label{eq:rot}
    (h_1'(\delta),h_2'(\delta))=(0,-1), \quad (h_1'(1-\delta),h_2'(1-\delta))=(0,1),
\end{equation} and $k(r)$ is contained between $(-1,0)$ and $(1,0)$ in the left hemisphere of $S^1$ for $r\in(\delta,1-\delta)$. \\

Our first goal is to show that there exists another stabilizing one-form $\tilde \lambda$ of $\omega$ which is closed near $r=\delta$ and $r=1-\delta$. This will be a useful property to make sure that a modification of $\omega$ that we will do later remains stable.

Choose a small enough $\tau>0$ and consider the intervals $I_1=[\tau, \delta-\tau]$, $I_2=[\delta+\tau, 1-\delta-\tau]$ and $I_3=[1-\delta+\tau,1-\tau]$. To ease the notation, we will assume that there is some $\tau$ such that the slope of the Reeb field of $(\lambda,\omega)$ is not constant in $I_1,I_2,I_3$. This is trivially true if we introduce several parameters $\tau_i$, but we will stick to a single $\tau$. In a small open neighborhood of $[0,\tau]$ in $I$, the one-form $\eta_0=g_1(r)d\theta+g_2(r)d\phi=\lambda|_{[0,\tau]}$ stabilizes $\omega$. In a neighborhood of $[\delta-\tau,\delta+\tau]$, we choose as one-form $\eta_1=d\theta$. The kernel of $\omega$ in $\{r=\delta\}$ is proportional to $-\pp{}{\theta}$, hence $\eta_1$ stabilizes $\omega$ in a neighborhood of $[\delta-\tau,\delta+\tau]$ for $\tau$ small enough. In a neighborhood of $[1-\delta-\tau, 1-\delta+\tau]$, we choose the one-form $\eta_2= -d\theta$, which by the same reason as before stabilizes $\omega$. In an open neighborhood of $[1-\tau,\tau]$ we choose $\eta_3=g_1(r)d\theta+g_2(r)d\phi=\lambda|_{[1-\tau,1]}$. We can now apply Proposition \ref{prop:CV1} in each interval $I_1,I_2,I_3$. We obtain a stabilizing one-form $\tilde \lambda$ of $\omega$ such that 
\begin{enumerate}
\item $\tilde \lambda=\lambda$ near $r=0$ and $r=1$,
\item $\tilde \lambda=d\theta$ near $r=\delta$,
\item $\tilde \lambda=-d\theta$ near $r=1-\delta$.
\end{enumerate}
Recall that the kernel of $\omega$ admits a positive partial section (Lemma \ref{lem:htw}) given by 
$$ \Sigma=\{(r,\theta,\varphi)\mid r\in[\delta,1-\delta], \varphi=\pi, \}. $$
A small neighborhood of the binding orbit $\gamma_1=\{r=\delta, \varphi=\pi\}$ is given by $U=\{(r,\theta,\varphi)\mid r\in(\delta-\tau_1,\delta+\tau_1), \varphi\in(\pi-\tau_1,\pi+\tau_1)\}$, which is isomorphic to $(-\tau_1,\tau_1)\times (-\tau_1,\tau_1)\times S^1$. In this last description let $(x,y)$ be coordinates in $(-\tau_1,\tau_1)$, i.e. $x=r-\delta$ and $y=\varphi-\pi$ in $U$. In these coordinates, we have
$$\omega=h_1'(x+\delta)dx\wedge d\theta + h_2'(x+\delta) dx\wedge dy, $$
and we know that $\frac{h_1'(x)}{h_2'(x)}$ is decreasing in $(0,\tau_1)$, by our assumptions on the slope function of the kernel of $\omega$. The stabilizing form in these coordinates is still written $\tilde \lambda=d\theta$. The partial section corresponds in $U$ to $\Sigma \cap U=\{x\geq 0, y=0\}$, see Figure \ref{fig:partdelt}). Take polar coordinates $(\rho,\phi)$ in a small enough two-disk $D\subset [-\tau_1,\tau_1]^2$ centered at zero. Then $\Sigma$ is just $\{\phi=0\}$, the two-form $\omega$ has the expression
$$\omega= h_1'(\rho\cos\phi+\delta)\cdot \left(\cos \phi d\rho \wedge d\theta-\sin \phi \rho d\phi\wedge d\theta\right)+h_2'(\rho\cos\phi+\delta) \rho d\rho \wedge d\phi.$$
and the kernel $\omega$ is generated by
$$X=h_1'(\rho\cos\phi+\delta)\sin\phi \pp{}{\rho} + h_1'(\rho\cos\phi+\delta)\frac{\cos \phi}{\rho}\pp{}{\phi} - h_2'(\rho\cos\phi+\delta)\pp{}{\theta}. $$
Now we modify $\omega$ by an exact perturbation
\begin{align*}
\tilde \omega&= \omega -\rho F(\rho)d\rho\wedge d\theta= \omega - d\big(\left( \int_0^\rho u F(u)du\right) d\theta \big).
\end{align*}
where $F(\rho)$ is a non-positive cut-off function equal to some negative constant $c>0$ near $\rho=0$, and equal to $0$ near $\rho=\delta'$ for some small $\delta'>0$. The one-form $\beta=\left( \int_0^\rho u F(u)du -  \int_0^{\delta'}u F(u) du\right) d\theta$ is also a primitive of the perturbation that we apply to $\omega$, and vanishes near the boundary of $U$. It is hence a globally defined one-form. If we can choose $c$ arbitrarily small, then $\beta$ can be assumed to be arbitrarily $C^\infty$-small. The kernel of $\tilde \omega$ is spanned by
$$ \tilde X= X + F(\rho)\pp{}{\phi}. $$
First, observe that the partial section $\Sigma$ is still a partial section of $X$. Indeed as $(h_1',h_2')$ is rotating clockwise near $r=\delta$ and $(h_1'(\delta),h_2'(\delta))=(0,-1)$, we have that $h_1'(r)<0$ for $r\in (\delta, \delta+ \delta_1)$ for a small $\delta_1$. Hence the vector field $F(\rho)\pp{}{\phi}$, which is transverse to $\Sigma$ for $x>0$, induces the same orientation as $X$ on $\Sigma$. The conclusion is that $\tilde X$ is positively transverse to the interior of $\Sigma$, see Figure \ref{fig:partdelt}.

\begin{figure}[!ht]
\begin{center}
\begin{tikzpicture}
     \node[anchor=south west,inner sep=0] at (0,0) {\includegraphics[scale=0.1]{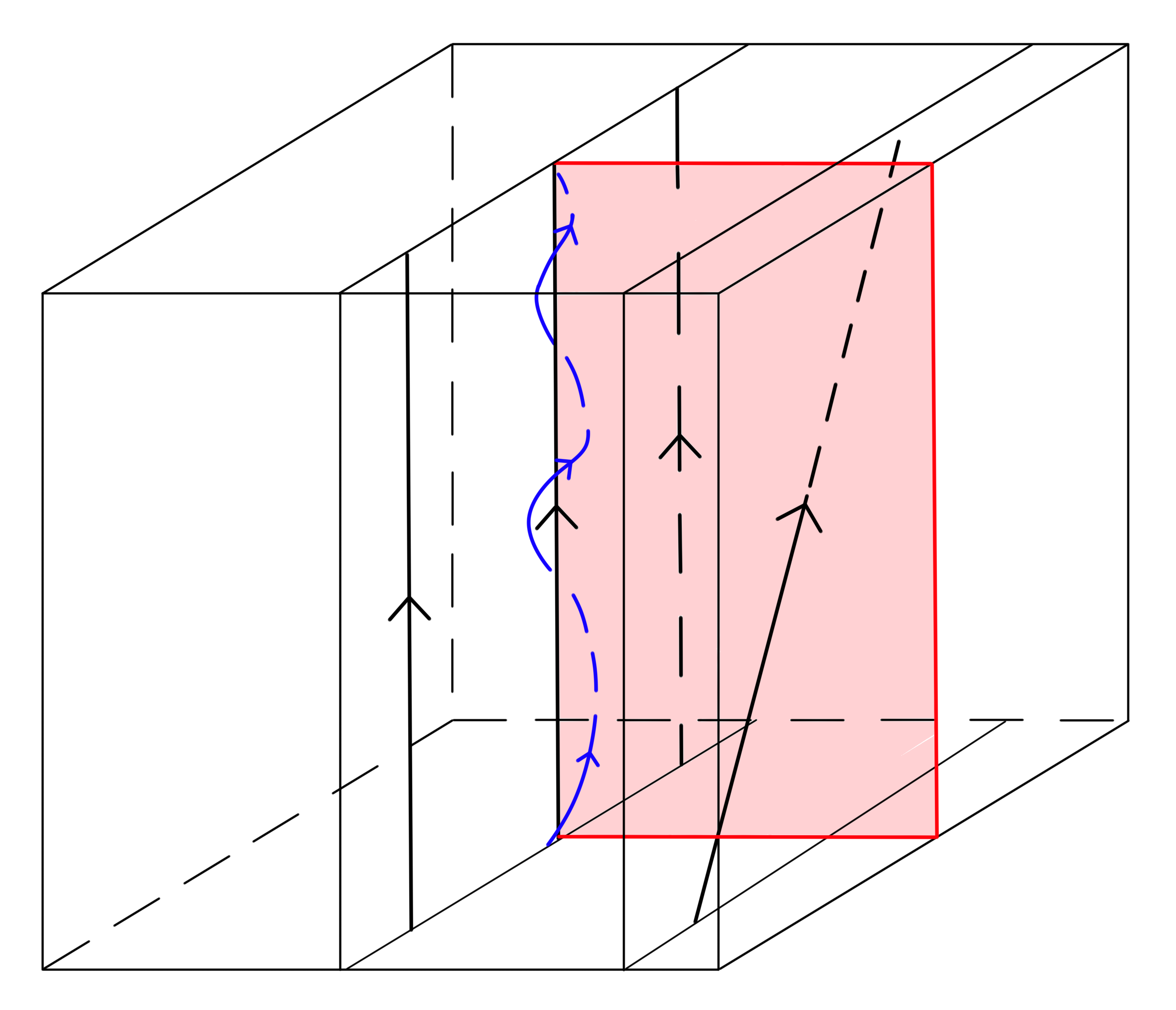}};
     \draw[->] (7,0.95)--(7,1.55);
     \draw[->] (7,0.95)--(7.6,0.95);
     \draw[->] (7,0.95)--(6.65,0.65);
     \node[scale=0.8] at (7,1.8) {$\theta$};
     \node[scale=0.8] at (7.8,0.95) {$x$};
     \node[scale=0.8] at (6.9,0.55) {$y$};
     \node[scale=0.8, color=red] at (5.7,4) {$\Sigma\cap U$};
     \node[scale=0.75, color=blue] at (2.98,5.05) {$ F(\rho)\pp{}{\phi}$};
     \end{tikzpicture}
\caption{The partial section and the perturbation of $X$}
\label{fig:partdelt}
\end{center}
\end{figure}

 Secondly, we claim that the form $\tilde \lambda$ is a stabilizing form of $\tilde \omega$. This has to be checked only in $D\times S^1$, since away from it the pair $(\tilde \lambda, \tilde \omega)$ coincides with $(\lambda,\omega)$. In $D\times S^1$, $\tilde \lambda$ is closed so the only condition to check is that $\tilde \lambda\wedge \tilde \omega>0$. But we know that in $\tilde \lambda$ in $U$ has the form $\tilde \lambda|_U=d\theta$, so for any choice of $F(\rho)$ we have $\lambda|_U(\tilde X)=\lambda_U(X)>0$. In fact, the form $\tilde \lambda$ is a stabilizing one-form for any Hamiltonian structure obtained by a compactly supported small perturbation of $\omega$ in $D\times S^1$, this will be useful to argue that the isotopy can be done through stable hypersurfaces.\\

Let us show that $\Sigma$ is a $\partial$-strong partial section of the Reeb field of $(\tilde \lambda,\tilde \omega)$. Let $M_L$ be the blow-up manifold obtained by replacing each periodic orbit in $\partial \Sigma$ by its unit normal bundle. In terms of our coordinates, the boundary of $M_L$ associated with $\gamma$ admits a collar neighborhood $\tilde U\cong T^2\times [0,1]$ parametrized by $(\theta,\phi, \rho)$ (we abuse notation by denoting the extended coordinates like the original ones), and the extended section $\Sigma_L$ corresponds to $\{\phi=0\}$. To compute the extended vector field $X_L$ we proceed as in \cite{Hr}. First, in $U$ the vector field $\tilde X$ has the form
$$\tilde X=h_1'\sin\phi \pp{}{\rho} +\left(F(\rho) + h_1'\frac{\cos \phi}{\rho}\right)\pp{}{\phi} - h_2'\pp{}{\theta}. $$
Second, recall that $h_2'|_{\rho=0}=-1$, so we have $X|_{(\theta,0,0)}=(1,0,0)$. We will now describe, following \cite[Section 3.1]{Hr}, the behavior of the extended vector field $\tilde X_L$ near the boundary component of $M_L$ associated with the closed orbit $\gamma$. It has the form
$$ \tilde X_L |_{\mathbb{T}_\gamma}= \pp{}{\theta} + b(\theta,\phi) \pp{}{\phi}, $$
 and the function $b$ can be described in terms of a decomposition of the form $\tilde X=u_1\pp{}{\theta}+u_2\pp{}{x}+u_3\pp{}{y}$ as follows. Let $x+iy$ be a complex coordinate of the disk $D$, then in our notation $x+iy=\rho e^{i\phi}$. Then
$$ b(\theta,\varphi)= \langle D_{2}Y(\theta,0)e^{i\phi}, ie^{i\phi} \rangle$$
where $Y=u_2\pp{}{x}+u_3\pp{}{y}$ and $D_2$ is the derivative in the disk factor, see \cite[Formula (14)]{Hr}. In our situation, the vector field $\tilde X$ decomposes as
$$ \tilde X= h_1'(x+\delta)\pp{}{y} - h_2'(x+\delta)\pp{}{\theta} + F(\rho) \left( -y \pp{}{x} + x \pp{}{y} \right). $$
A computation of $D_2Y$ near a small neighborhood of the origin, taking into account that $F(\rho)\equiv c$ for $\rho$ small enough, yields
\begin{equation*}
D_2Y(\theta,\rho,\phi)= 
\begin{pmatrix}
0 & -c\\
h_1''(x+ \delta)+c&0
\end{pmatrix} \implies D_2Y(\theta, 0, \phi)= 
\begin{pmatrix}
0 & -c\\
h_1''(\delta) +c&0
\end{pmatrix} 
\end{equation*}
We deduce that $b(\theta,\phi)=c\sin^2\phi+(h_1''(\delta)+c) \cos^2\phi$. Since $h'(r)$ is rotating positively, i.e. $(h_1'(r),h_2'(r))$ turns clockwise at $r=\delta$, and by Equation \eqref{eq:rot} we have $(h_1'(\delta),h_2'(\delta))=(0,-1)$, we deduce that $h_1''(\delta)\leq  0$. Choosing any negative value $c$ is enough, and then $\tilde X_L$ is everywhere transverse to the extended section $\Sigma_L$, which is given by the equation $\{\phi=0\}$. In particular, the constant $c$ can be arbitrarily small in absolute value and the one-form $\beta$ is $C^\infty$-small.

\begin{Remark} An alternative approach to perturb $X_L$ into some $\tilde X_L$ transverse to $\Sigma_L$ and such that the section becomes $\partial$-strong is by doing a $T^2$-invariant perturbation of the stable Hamiltonian structure in a way that $h_1''(\delta)>0$ near the tori containing the binding orbits. It might be possible to obtain this condition directly via an embedding as in Proposition \ref{prop:SHSht}, but it certainly does not follow immediately from the construction. We use a more general approach instead, which can be useful in other situations where one needs to produce a $\partial$-strong partial section out of one that is not, by slightly perturbing the vector field near the binding orbit. For the last step of this proof (making the binding orbits non-degenerate), it is key in any case that we change the stabilizing form near the binding orbit to be closed.
\end{Remark}

Doing this at every boundary component of $\Sigma$, and at each boundary component of the negative partial section in the negative rational half-twisting region, we end up with a stable Hamiltonian structure $(\tilde \lambda, \tilde \omega)$ whose Reeb field admits a positive and a negative $\partial$-strong partial section. Furthermore the Hamiltonian structure $\tilde \omega$ is cohomologous to $\omega$ and arbitrarily $C^\infty$-close to it. We do a final modification to $\tilde \omega$ to ensure that the binding orbits of the partial sections are non-degenerate. Near a binding orbit, using the previous notation, the stabilizing one-form is $\tilde \lambda=d\theta$ where $X|_{\gamma}=\pp{}{\theta}$. By \cite[Lemma 19]{Rob} there exists an arbitrarily $C^\infty$-small one-form $\eta$, compactly supported near $\gamma$, such that kernel of $\hat \omega=\tilde \omega+d\eta$ has $\gamma$ as a non-degenerate periodic orbit. The pair $(\tilde \lambda, \hat \omega)$ is still a stable Hamiltonian structure because $\tilde \lambda$ is closed near $\gamma$. As we observed in the first perturbation, the one-form $\tilde \lambda$ is a stabilizing one-form for any Hamiltonian structure that is a perturbation of $\tilde \omega$ compactly supported near the periodic orbit. The Reeb field of $(\tilde \lambda,\hat \omega)$ is $C^\infty$-close to the Reeb field of $(\tilde \lambda,\tilde \omega)$, but since $\Sigma$ was a $\partial$-strong partial section of the latter, it will still be a $\delta$-strong partial section of the former.

 Doing this at each binding orbit of each partial section, we end up with a stable Hamiltonian structure, which we denote $(\hat \lambda,\hat \omega)$ abusing notation, whose Reeb field admits a positive and a negative $\partial$-strong partial sections with non-degenerate binding orbits. The two-form $\hat \omega$ is cohomologous to $\omega$ and $C^\infty$-close to it, so this concludes the proof. The isotopy between $M$ and a stable hypersurface $M_1$ with induced Hamiltonian structure $\hat \omega$ is obtained via Lemma \ref{lem:app}, and gives an isotopy that is compactly supported near the bindings orbits of the partial sections. Our observations about the stabilizing properties of $\tilde \lambda$ show that this isotopy is done through stable hypersurfaces.
\end{proof}

\subsection{Non-density of Reeb-like flows}

We have shown in the previous sections how to construct stable hypersurfaces whose characteristic foliation admits a positive and a negative $\partial$-strong partial section with non-degenerate binding orbits. This is a robust obstruction to being Reeb-like that we obtain as a combination of Proposition \ref{prop:robpartial} and Lemma \ref{lem:obs}.

\begin{corollary}\label{cor:robobs}
    Let $X$ be a vector field in $M$ that admits a positive and a negative $\partial$-strong partial section whose binding orbits are non-degenerate. Then there exists a $C^1$-neighborhood of $X$ in $\mathfrak{X}(M)$ such that every vector field in this neighborhood is not Reeb-like.
\end{corollary}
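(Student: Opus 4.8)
The plan is to combine the two results just proven, essentially repackaging them. By Proposition \ref{prop:robpartial}, since $X$ admits a $\partial$-strong partial section $\Sigma_+$ with non-degenerate binding orbits (take the positive one), there is a $C^1$-neighborhood $\mathcal{U}_+$ of $X$ in $\mathfrak{X}(M)$ such that every $Y\in\mathcal{U}_+$ admits a $\partial$-strong partial section isotopic to $\Sigma_+$ by a $C^1$-small isotopy, with the orientations of boundary components preserved. Applying Proposition \ref{prop:robpartial} again to the negative partial section $\Sigma_-$ gives a $C^1$-neighborhood $\mathcal{U}_-$ of $X$ with the analogous conclusion.

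Now set $\mathcal{U}=\mathcal{U}_+\cap\mathcal{U}_-$, which is again a $C^1$-neighborhood of $X$. For any $Y\in\mathcal{U}$, the perturbed section obtained from $\Sigma_+$ is still a \emph{positive} partial section of $Y$: the key point is that Proposition \ref{prop:robpartial} preserves the induced orientation on each boundary component, so the property of being positive (the induced boundary orientation agreeing with the flow direction) is preserved. Likewise the perturbed section from $\Sigma_-$ is a negative partial section of $Y$. Hence $Y$ admits both a positive and a negative partial section, and Lemma \ref{lem:obs} immediately yields that $Y$ is not Reeb-like. Since $Y\in\mathcal{U}$ was arbitrary, this proves the corollary.

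There is no real obstacle here: the statement is exactly the conjunction of Proposition \ref{prop:robpartial} (applied twice) and Lemma \ref{lem:obs}. The only point requiring a word of care is that Lemma \ref{lem:obs} needs the two sections to be genuinely of opposite sign for the same vector field $Y$, which is guaranteed because Proposition \ref{prop:robpartial} transports the sign (via the preserved orientation of boundary components) along the perturbation; intersecting the two neighborhoods ensures both perturbed sections exist simultaneously for every $Y\in\mathcal{U}$. One could equivalently invoke Corollary \ref{cor:refTw} in the special case where the sections come from rational half-twisting regions, but phrasing the proof directly through Proposition \ref{prop:robpartial} gives the statement in the generality claimed, namely for any $\partial$-strong signed partial sections, not only those arising from integrable regions.
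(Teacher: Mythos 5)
Your proof is correct and is exactly the paper's argument: the corollary is stated there as the combination of Proposition \ref{prop:robpartial} (applied to each signed section, with the orientations of the boundary components preserved under perturbation) and Lemma \ref{lem:obs}, which is what you spell out. The only addition you make is the explicit intersection of the two neighborhoods, which is a harmless and correct elaboration of the same reasoning.
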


Being Reeb-like is a property of the foliation, so we have actually shown that any positive multiple of such a vector field admits a $C^1$-neighborhood in the space of vector fields in $M$ such that every vector field in that neighborhood is not Reeb-like.\\

Recall that $\mathfrak{X}_{vol}(M)$ denotes the space of non-vanishing vector fields preserving some volume form. We denote by $\mathcal{R}(M)$ and $\mathcal{SR}(M)$ the space of Reeb-like and stable-like flows in $M$ respectively. We are ready to prove Theorem \ref{thm:Reeb}, whose statement we now recall.

\begin{theorem}\label{thm:dens}
Let $M$ be a closed three-manifold. On each homotopy class of vector fields, there exists $Y\in \mathcal{SR}(M)$ and a $C^1$-neighborhood $\mathcal{V}$ of $Y$ in $\mathfrak{X}(M)$ such that $\mathcal{V}\cap \mathcal{R}(M)=\emptyset$.
\end{theorem}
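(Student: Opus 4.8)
The plan is to start from any non-vanishing vector field $v$ in the given homotopy class and realize some $C^\infty$-perturbation of it as a stable-like flow that carries the robust obstruction to being Reeb-like developed in the previous sections. Concretely, I would first fix a homotopy class of non-vanishing vector fields on $M$. I need to produce at least one stable Hamiltonian structure $(\lambda,\omega)$ whose Reeb field (up to positive scaling) lies in this homotopy class and whose induced $\omega$ has a $T^2$-invariant region with non-constant slope; the simplest way is to embed a local model. Since $M$ is a closed three-manifold, I can choose a small solid-torus chart and arrange, by a local surgery/construction on the line field, that the homotopy class is realized by a vector field which on $T^2\times I$ is linear in the torus fibers with a slope function that makes an arbitrarily small turn — this is harmless for the homotopy class — and then thicken this picture into a genuine stable Hamiltonian structure using the $T^2$-invariant normal forms from Section \ref{ss:inteT2}. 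Alternatively, and perhaps more cleanly, I would invoke the fact that every homotopy class of non-vanishing vector fields on a closed three-manifold contains a Reeb-like (hence stable-like) representative — this follows from flexibility results for contact structures / Reeb flows — so I may simply start with $Y_0\in\mathcal{R}(M)\subset\mathcal{SR}(M)$ in the given class.

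Once I have such a stable-like $Y_0$, realized as the Reeb field of $(\lambda_0,\omega_0)$, the next step is to perturb it into a stable Hamiltonian structure with the half-twisting structure we need. Here I would run the argument of Proposition \ref{prop:SHSht} at the level of stable Hamiltonian structures rather than hypersurfaces: in a chart where $(\lambda_0,\omega_0)$ is $T^2$-invariant (possibly after first homotoping $\lambda_0$ to arrange a $T^2$-invariant region, using the primitive-curve picture $H=(h_1,h_2)$, $G=(g_1,g_2)$), I modify the curve $H$ by an arbitrarily $C^\infty$-small isotopy supported in the interior so that its slope visits a rational point $q$, then $-q$, then $q$, then $-q$ again. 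Exactly as in the proof of Proposition \ref{prop:SHSht} this forces both a positive and a negative rational half-twisting $T^2$-invariant region; Proposition \ref{prop:CV1} (non-constant slope) produces the stabilizing one-form, so the perturbed pair is again a stable Hamiltonian structure, and its Reeb field $Y_1$ is $C^\infty$-close to $Y_0$ and in particular in the same homotopy class.

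Then I apply Theorem \ref{thm:SHSdelta} — or rather its proof, which operates entirely at the level of stable Hamiltonian structures — to upgrade these two half-twisting regions into a positive and a negative $\partial$-strong partial section with non-degenerate binding orbits, obtaining a stable Hamiltonian structure $(\hat\lambda,\hat\omega)$ whose Reeb field $Y$ is the desired one. By Corollary \ref{cor:robobs}, $Y$ admits a $C^1$-neighborhood $\mathcal{V}$ in $\mathfrak{X}(M)$ consisting of non-Reeb-like vector fields, i.e. $\mathcal{V}\cap\mathcal{R}(M)=\emptyset$, while $Y\in\mathcal{SR}(M)$ by construction; and since the whole chain of perturbations was $C^\infty$-small starting from a vector field in the prescribed homotopy class, $Y$ lies in that class. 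This proves the theorem, and the two displayed non-density consequences ($\mathcal{R}(M)$ not $C^1$-dense in $\mathfrak{X}_{vol}(M)$ or in $\mathcal{SR}(M)$) are immediate.

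The main obstacle I anticipate is the very first step: guaranteeing that \emph{every} homotopy class of non-vanishing vector fields contains a stable-like representative admitting (after small perturbation) a $T^2$-invariant region with non-constant slope. If one cannot simply cite a flexibility result putting a Reeb-like flow in each class, one must build the local $T^2\times I$ model by hand and check that splicing it in does not change the homotopy class of the line/vector field — an obstruction-theoretic computation — and that the complement can be filled in with a compatible stable Hamiltonian structure. Everything after that is essentially a repackaging of Propositions \ref{prop:SHSht} and Theorem \ref{thm:SHSdelta} in the intrinsic (no ambient symplectic manifold) setting, where the constructions are actually easier because one does not need Lemma \ref{lem:app} to realize the perturbed $\omega$ as a hypersurface.
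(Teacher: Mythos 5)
Your overall strategy (put a stable-like representative in the given homotopy class, introduce a positive and a negative rational half-twisting region \`a la Proposition \ref{prop:SHSht}, upgrade to $\partial$-strong partial sections with non-degenerate bindings via Theorem \ref{thm:SHSdelta}, and conclude with Corollary \ref{cor:robobs}) is exactly the paper's; the paper even remarks that one may work intrinsically with homotopies of stable Hamiltonian structures instead of embeddings, although its written proof passes through the symplectization of $(\lambda,\omega)$. Your first step is also fine and needs no hedging: Lutz--Martinet gives a contact structure in every homotopy class of plane fields, which is precisely how the paper starts.

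The genuine gap is in the transition from the contact representative to the setting where Proposition \ref{prop:SHSht} applies. That proposition requires a stabilizing one-form with $d\lambda=f\omega$ for \emph{non-constant} $f$ (this is what guarantees a $T^2$-invariant region of the pair $(\lambda,\omega)$ in which the curve $H=(h_1,h_2)$ can be wiggled). If you start from $(\alpha,d\alpha)$, then $f\equiv 1$ and the Reeb flow of a contact form need not have any integrable region at all. Your proposed fix, ``possibly after first homotoping $\lambda_0$ to arrange a $T^2$-invariant region,'' does not work: the characteristic foliation is $\ker\omega_0$ and is untouched by changing the stabilizing form, so no homotopy of $\lambda_0$ alone can create integrability; you must deform $\omega_0$ (equivalently the flow) itself. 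The paper closes this gap by citing \cite[Theorem 3.37]{CV}, which homotopes $(\alpha,d\alpha)$ through stable Hamiltonian structures to a pair $(\lambda,\omega)$ with $d\lambda=f\omega$ and $f$ non-constant (whence $T^2$-invariant regions exist, by the discussion in Section \ref{ss:inteT2}), and only then runs Proposition \ref{prop:SHSht} and Theorem \ref{thm:SHSdelta} on the hypersurface $M\times\{0\}$ in the symplectization, taking at the end a section of $\ker\varphi_1^*\Omega$. Your alternative of splicing in a solid-torus local model by hand is the hard part you yourself flag as unresolved (one must keep a globally defined stable Hamiltonian structure and the homotopy class), so as written the proposal is missing the key ingredient that makes the first perturbation legitimate; with \cite[Theorem 3.37]{CV} (or an equivalent integrability-creating deformation of $\omega$, as in Section \ref{ss:addint}) inserted at that point, the rest of your argument goes through as in the paper.
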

A consequence is that $\mathcal{R}(M)$ is not $C^1$-dense in $\mathcal{SR}(M)$ or in $\mathfrak{X}_{vol}(M)$. One can as well fix a volume-form $\mu$ and denote by $\mathfrak{X}_{\mu}(M)$, $\mathcal{R}_\mu(M)$ and $\mathcal{SR}_\mu(M)$ the space of flows preserving $\mu$, of Reeb-like flows preserving $\mu$ and of stable-like flows preserving $\mu$ in $M$. Before proving Theorem \ref{thm:dens}, we state a straightforward corollary in terms of flows preserving a fixed volume form.
 
\begin{corollary}\label{cor:fixmu}
For any volume form $\mu$ in $M$, the set $\mathcal{R}_\mu(M)$ is not $C^1$-dense in $\mathcal{SR}_\mu(M)$ or in $\mathfrak{X}_{\mu}(M)$.
\end{corollary}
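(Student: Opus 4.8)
The plan is to deduce Corollary \ref{cor:fixmu} directly from Theorem \ref{thm:dens} (equivalently Theorem \ref{thm:Reeb}) by a simple rescaling argument, using the flexibility in the choice of the preserved volume form. First I would fix an arbitrary volume form $\mu$ on $M$ and take the vector field $Y\in\mathcal{SR}(M)$ produced by Theorem \ref{thm:dens} together with its $C^1$-neighborhood $\mathcal{V}$ with $\mathcal{V}\cap\mathcal{R}(M)=\emptyset$. Since $Y$ is stable-like it preserves \emph{some} volume form $\mu_Y$; write $\mu_Y = \psi\,\mu$ for a positive function $\psi\in C^\infty(M)$. The key observation is that being Reeb-like is a property of the oriented foliation spanned by the vector field, not of the particular parametrization: if $gX$ is Reeb-like for a positive function $g$, then so is $X$. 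Hence I would replace $Y$ by $Y' := \tfrac{1}{\psi}\,Y$ (or, to be safe, by a suitable positive multiple), which spans the same foliation, is still stable-like, and now lies in $\mathcal{SR}_\mu(M)$ because $\iota_{Y'}\mu = \tfrac1\psi\iota_Y\mu$ is closed iff $\iota_Y\mu_Y$ is.

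Actually the cleanest route avoids the rescaling and simply reuses the construction: the hypersurface $\tilde M$ (equivalently, the stable Hamiltonian structure) from Theorem \ref{thm:SHSdelta} yields a characteristic line field carrying a positive and negative $\partial$-strong partial section with non-degenerate binding orbits. Given the fixed $\mu$, Theorem \ref{thm:Ste}-type normal forms and the discussion in Section \ref{ss:SHS} provide a vector field $Y_\mu$ spanning this line field and preserving $\mu$ (one simply solves $\iota_{Y_\mu}\mu = \omega$ for the relevant closed maximally non-degenerate two-form $\omega$, which exists precisely because the line field is the kernel of a volume-compatible Hamiltonian structure). Then $Y_\mu\in\mathcal{SR}_\mu(M)$, and by Corollary \ref{cor:robobs} there is a $C^1$-neighborhood $\mathcal{V}_\mu$ of $Y_\mu$ in $\mathfrak{X}(M)$ with $\mathcal{V}_\mu\cap\mathcal{R}(M)=\emptyset$. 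Intersecting with $\mathfrak{X}_\mu(M)$ (a closed subspace, but that is irrelevant here) gives a $C^1$-neighborhood of $Y_\mu$ inside $\mathfrak{X}_\mu(M)$ disjoint from $\mathcal{R}_\mu(M)$, and also disjoint from $\mathcal{R}(M)$, which already proves non-density in $\mathfrak{X}_\mu(M)$; since $Y_\mu\in\mathcal{SR}_\mu(M)$, the same neighborhood witnesses non-density in $\mathcal{SR}_\mu(M)$ as well.

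The main (indeed only) subtlety is the compatibility between the preserved volume form $\mu$ that we are handed and the volume form that the constructed stable-like field preserves a priori. This is exactly the point addressed by the remark in Section \ref{ss:SHS} that a positive multiple of a Reeb field of a stable Hamiltonian structure preserves \emph{some} volume form, possibly different from $\mu$; one needs to know that by rescaling the field by a positive function one can hit the prescribed $\mu$ while staying stable-like, which holds because $\ker\omega\subset\ker d\lambda$ and the stable-like condition only constrains the oriented line field, together with positivity $\lambda(Y)>0$, both of which are preserved under multiplication by positive functions. I would phrase the proof as: apply Theorem \ref{thm:dens} to obtain the foliation, observe Reeb-likeness depends only on the oriented foliation, pick the unique (up to positive scaling) representative $Y_\mu$ preserving $\mu$, and invoke Corollary \ref{cor:robobs} to obtain the $C^1$-open set of non-Reeb-like fields around it; this is a few lines and contains no real obstacle beyond carefully invoking the right earlier statements.
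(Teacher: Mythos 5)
Your proposal is correct, but the route you ultimately adopt is not the paper's. The paper deduces Corollary \ref{cor:fixmu} from the \emph{statement} of Theorem \ref{thm:dens} alone: the field $X$ there preserves $\hat\mu=\frac{1}{\lambda(X)}\lambda\wedge\omega$, one writes $\mu=h\hat\mu$, passes to $\tilde X=\frac{1}{h}X\in\mathcal{SR}_\mu(M)$, and excludes $C^1$-approximation by a soft sequence argument: if $Y_n\in\mathcal{R}(M)$ converged to $\tilde X$, then $hY_n\in\mathcal{R}(M)$ would converge to $X$, contradicting the choice of $\mathcal{V}$. You instead re-enter the construction: take the hypersurface from Theorem \ref{thm:SHSdelta}, define the $\mu$-preserving representative by $\iota_{Y_\mu}\mu=\omega$ (this is indeed non-vanishing, spans $\ker\omega$, preserves $\mu$ since $d\omega=0$, and satisfies $\lambda(Y_\mu)>0$ because $\lambda\wedge\omega=\lambda(Y_\mu)\,\mu$, with the harmless caveat that if $\mu$ induces the opposite orientation one flips the sign of $Y_\mu$ or of $\omega$), and invoke Corollary \ref{cor:robobs} directly. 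For that invocation you must check that $Y_\mu$ itself, not just the Reeb field of $(\tilde\lambda,\tilde\omega)$, carries the positive and negative $\partial$-strong partial sections with non-degenerate bindings; this does hold, since transversality/tangency, the signs of boundary components, transversality in the blow-up, and non-degeneracy of the linearized return map depend only on the oriented line field, and it is precisely what the paper records in the sentence following Corollary \ref{cor:robobs}, so your step is justified though it deserves an explicit line. The paper's route buys brevity and uses Theorem \ref{thm:dens} as a black box; yours buys a $C^1$-open Reeb-free neighborhood around a concrete $\mu$-preserving representative without any limiting argument. One caveat on your discarded first paragraph: the rescaling there is inverted (if $Y$ preserves $\mu_Y=\psi\mu$, the $\mu$-preserving multiple is $\psi Y$, not $\tfrac{1}{\psi}Y$), and the claim that $\tfrac{1}{\psi}\iota_Y\mu$ is closed iff $\iota_Y\mu_Y$ is closed is false in general, since $f\,\iota_Y\mu_Y$ is closed exactly when $df(Y)=0$; moreover that paragraph alone would still need the paper's sequence trick to transport the Reeb-free neighborhood from $Y$ to its multiple. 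Since your final argument does not rely on it, correctness is unaffected.
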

\begin{proof}
By Theorem \ref{thm:dens}, there is some $X\in \mathcal{SR}(M)$ and a neighborhood $\mathcal{V}$ of $X$ in $\mathfrak{X}(M)$ such that $\mathcal{V}\cap \mathcal{R}(M)=\emptyset$. There exists a stable Hamiltonian structure $(\lambda,\omega)$ such that $\lambda(X)>0$ and $\iota_X\omega=0$, and $X$ preserves the volume form $\hat \mu=\frac{1}{\lambda(X)}\lambda\wedge \omega$. For any other volume form $\mu=h\hat \mu$ where $h$ is a positive function in $M$, the vector field $\tilde X=\frac{1}{h}X$ is in $\mathcal{SH}_\mu(M)$, and we claim that it cannot be $C^1$-approximated by flows in $\mathcal{R}(M)$. If there is a sequence of vector fields $Y_n \in \mathcal{R}(M)$ whose limit is $\tilde X$, then $hY_n$ is a sequence of vector fields in $\mathcal{R}(M)$ whose limit is $X$, reaching a contradiction. Thus $\tilde X$ admits a neighborhood in $\mathfrak{X}(M)$ that does not intersect $\mathcal{R}(M)$, and since $\mathcal{R}_\mu(M)$ is a subset of $\mathcal{R}(M)$, we conclude that there is a neighborhood $\mathcal{U}$ of $\tilde X$ in $\mathfrak{X}(M)$ such that $\mathcal{U}\cap \mathcal{R}_\mu(M)=\emptyset$, proving the corollary.
\end{proof}
We proceed to the proof of Theorem \ref{thm:dens}.
\begin{proof}[Proof of Theorem \ref{thm:dens}]
There are several ways to construct a Reeb field of a stable Hamiltonian structure on a given homotopy class of non-vanishing vector fields on any closed three-manifold and such that $(\lambda,\omega)$ is such that $d\lambda=f\omega$ for a non-constant function $f$. Perhaps the simplest combination of results is the following. Homotopy classes of vector fields and of plane fields are in one-to-one correspondence, simply by considering the orthogonal complement with respect to some ambient Riemannian metric. Given a homotopy class of vector fields, there exists a contact structure $\xi$ in the corresponding homotopy class of plane fields, as implied by the classical results of Lutz and Martinet. Let $(\alpha,d\alpha)$ be the stable Hamiltonian structure such that $\ker \alpha=\xi$. By \cite[Theorem 3.37]{CV} the stable Hamiltonian structure $(\alpha,d\alpha)$ is homotopic to some other stable Hamiltonian structure $(\lambda,\omega)$ such that $d\lambda=f\omega$ with $f$ non-constant. Consider the symplectization of $(\lambda,\omega)$, namely $M\times (-\varepsilon,\varepsilon)$ equipped with the symplectic form
$$ \Omega= \omega+ d(t\lambda), $$
where $t$ parametrizes $(\varepsilon,\varepsilon)$. By Proposition \ref{prop:SHSht} and Theorem \ref{thm:SHSdelta}, there exists a $C^0$-small isotopy of embedded hypersurfaces $M_t=\varphi_t(M)$, such that $M_0=M\times \{0\}$ and $M_1$ is stable and the characteristic foliation of $M_1$ admit a $\partial$-strong positive and negative partial section with non-degenerate binding orbits. Choose a smooth section $X_t$ of the kernel of the family of closed two-forms $\omega_t=\varphi_t^*\Omega$. The flow $X_1$ is stable-like, since $\omega_1$ admits stabilizing one-form, and admits a $C^1$-neighborhood in $\mathfrak{X}(M)$ without Reeb-like flows by Corollary \ref{cor:robobs}. Furthermore, the vector field $X_1$ is homotopic to $X_0$ which belongs to an arbitrary homotopy class of vector fields. 
\end{proof}

\begin{Remark}
The ideas of the proof of Theorem \ref{thm:Reeb} via Theorem \ref{thm:SHSdelta} can be directly implemented by considering homotopies of stable Hamiltonian structures in $M$ instead of manipulating stable embeddings in its symplectization. Namely, we can introduce rational half-twisting regions and robust partial sections to the Reeb field of a stable Hamiltonian structure by deforming a well-chosen pair $(\lambda,\omega)$ in $M$ for instance using several techniques in \cite{CV}. However, we have to work with embeddings anyway to prove Theorem \ref{thm:main}.
\end{Remark}

\section{Generic non-integrability of volume-preserving flows}\label{s:nonint}

Throughout this section, and to apply KAM arguments, we will use the language of vector fields preserving a given volume form $\mu$. In this context, there is a one-to-one correspondence between smooth non-vanishing vector fields preserving $\mu$ and closed non-degenerate two-forms in $M$ (i.e. Hamiltonian structures). We denote by $\mathfrak{X}_\mu^\eta(M)$ the set of smooth non-vanishing vector fields in $M$ preserving $\mu$ and such that $[\iota_X\mu]=\eta\in H^2(M,\mathbb{R})$, which is equivalent to the set of Hamiltonian structures with cohomology class $\eta$, and equip it with the $C^\infty$-topology.

\subsection{The Wronskian and a KAM theorem}

Let $X$ be a smooth volume-preserving vector field on a three-dimensional manifold $M$. Let $U \subset M$ be an integrable region of $X$. By Theorem \ref{thm:Ste} there exist coordinates $(x,y,r)$ of $U\cong T^2\times I$ such that $X$ takes the form
\begin{equation}\label{eq:ste}
X= G(x,y,r)\cdot  \left( f_1(r) \pp{}{x} + f_2(r) \pp{}{y} \right). 
\end{equation}
Following \cite{KKPS}, define the Wronskian of $X$ along $U$ as
$$ W(X|_U):=\norm{f_1'(r)f_2(r)-f_1(r)f_2'(r)}.$$

\begin{defi}\label{def:twKAM}
A vector field $X$ has non-zero Wronskian in an integrable region $U$ if
$$ W(X|_U)(r)>\tau,\enspace \text{for some } \tau>0 \text{ and each } r\in [0,1]. $$
In other words, the Wronskian of $X$ in $U$ is uniformly bounded from below by a positive number.
\end{defi}

Let us now recall a KAM theorem for exact divergence-free vector fields following Khesin, Kuksin, and Peralta-Salas \cite[Theorem 3.2]{KKPS}. 

\begin{theorem}\label{thm:KAM}
Assume that $X$ is a vector field with an integrable region $U=T^2\times (-\varepsilon,\varepsilon)$ with coordinates $(x,y,r)$ where $X$ assumes the form 
$$X=\left( f(r) \pp{}{x} + g(r) \pp{}{y} \right)$$
and therefore is an exact divergence-free vector field with respect to $dx\wedge dy\wedge dr$. Assume, furthermore, that $X$ has non-zero Wronskian. Then there exists constants $C, \delta_0>0$ and a continuous function $\omega:[0,\delta_0]\rightarrow \mathbb{R}$ satisfying $\lim_{z\rightarrow 0}\omega(z)=0$ such that the following properties are satisfied. Let $Y$ be any exact divergence-free vector field with respect to $dx\wedge dy \wedge dr$ such that $\norm{Y-X}_{C^\infty}<\delta<\delta_0$. There exists a diffeomorphism $\psi:U \rightarrow U$ preserving $dx\wedge dy\wedge dr$, a Borel set $Q\subset (-\varepsilon,\varepsilon)$ and smooth functions $\tilde f(r), \tilde g(r)$ such that:
\begin{itemize}
\item[-]  $\operatorname{Leb}\left((-\varepsilon,\varepsilon)\setminus \overline Q\right)\leq C\tau^{-1}\sqrt\delta$, where $\tau$ is the lower bound of the Wronskian of $X$ in $U$,
\item[-] $\norm{\psi-\operatorname{id}}_{C^\infty}\leq \omega(\delta)$,
\item[-]  $\norm{f-\tilde f}_{C^\infty} \leq \omega(\delta)$ and $\norm{g-\tilde g}_{C^\infty}\leq \omega(\delta)$,
\item[-] for $\tilde r\in \overline Q$, the vector field $Y$ transformed by $\psi$ takes the form 
$$ \psi_*Y=\tilde f(r)\pp{}{x}+\tilde g(r)\pp{}{y}, $$
and the ratio $\frac{\tilde f(r)}{\tilde g(r)}$ is an irrational number.
\end{itemize}
\end{theorem}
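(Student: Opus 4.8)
The plan is to treat $X = f(r)\partial_x + g(r)\partial_y$ as a completely integrable divergence-free system on $T^2 \times (-\varepsilon,\varepsilon)$ whose invariant tori are the slices $T^2 \times \{r\}$, on which the induced motion is the linear flow of rotation vector $(f(r),g(r))$, and to run a KAM scheme in the exact divergence-free category. The first point is that the non-zero Wronskian hypothesis $W(X|_U) = \norm{f'g - fg'} > \tau$ is exactly the twist (non-degeneracy) condition for this scheme: since the angular speed of the curve $r \mapsto (f(r),g(r)) \in \R^2$ about the origin equals $\norm{f'g - fg'}/(f^2+g^2)$, a uniform lower bound $\tau$ forces the normalized rotation direction $k(r) \in S^1$ (the slope function of Section~\ref{ss:inteT2}) to move with uniformly positive speed, so $r$ is a bi-Lipschitz parameter for the frequency ratio $f/g$. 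Consequently, after fixing Diophantine exponents tuned to the size $\delta$ of the perturbation, the set of $r$ for which $f(r)/g(r)$ is Diophantine has complement of measure $O(\sqrt\delta)$, with the implied constant controlled by $\tau^{-1}$ via the inverse of the frequency map.

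Next I would set up the iteration. Write $Y = X + Z$ with $Z$ small and exact divergence-free with respect to $dx\wedge dy\wedge dr$, and carry along a primitive $\iota_Y(dx\wedge dy\wedge dr) = d\sigma$; keeping track of this primitive (and, what matters, of its periods) is what keeps us in the exact class throughout the iteration, and it is exactness that kills the translational cohomological obstruction that otherwise prevents persistence of invariant tori for volume-preserving flows. One then runs a fast-converging Moser--Newton iteration: at each step, near a torus with Diophantine slope, one solves the linearized homological equation on $T^2$ — whose solvability with tame estimates is governed by the small divisors controlled by the Diophantine condition — producing a volume-preserving near-identity change of coordinates that pushes $Y$ closer to an integrable model, while a counterterm correction $f,g \rightsquigarrow \tilde f,\tilde g$ of the frequency functions absorbs the non-removable average of $Z$. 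The standard quantitative bookkeeping gives convergence on the Cantor set of good frequencies, with the limit diffeomorphism $\psi$ volume-preserving and $C^\infty$-close to the identity, and with $\psi_*Y = \tilde f(r)\partial_x + \tilde g(r)\partial_y$, irrational slope, on each surviving torus.

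Then I would assemble the global statement. The surviving tori are parametrized, through the twist condition, by a Borel set $Q \subset (-\varepsilon,\varepsilon)$ of $r$-values whose complement has Lebesgue measure $\le C\tau^{-1}\sqrt\delta$ (the factor $\tau^{-1}$ being the Lipschitz constant used to transport the bad slopes back to the $r$-line). The Whitney-smooth dependence of the KAM tori and conjugacies on the frequency parameter, composed with this bi-Lipschitz reparametrization and extended off $\overline Q$ by a standard smoothing/extension argument, yields a single volume-preserving $\psi : U \to U$ with $\norm{\psi - \operatorname{id}}_{C^\infty} \le \omega(\delta)$ and $\norm{f - \tilde f}_{C^\infty},\,\norm{g - \tilde g}_{C^\infty} \le \omega(\delta)$, where $\omega(\delta) \to 0$ is the modulus of continuity coming from the KAM estimates; this is the content of \cite[Theorem~3.2]{KKPS}.

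The main obstacle is making the KAM machinery run cleanly in the exact divergence-free category rather than the Hamiltonian one: one must pick the right normal form for a divergence-free perturbation so that the homological equations are solvable with tame bounds, verify at every step that the corrections stay divergence-free and exact — genuinely tracking the primitive one-forms and their periods, not just the vector fields — and, most delicately, obtain the measure bound in the precise form $C\tau^{-1}\sqrt\delta$, which demands the optimal square-root loss in the iteration and a careful accounting of how all constants depend on $\tau$ and on the finite number of derivatives available. Since only $C^\infty$ (in practice, finitely many derivatives) is in play, the Diophantine exponents and the number of Newton steps must be balanced against $\delta$, and it is precisely this balancing that produces the modulus $\omega$.
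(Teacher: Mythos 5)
The paper does not actually prove this theorem: it is quoted, essentially verbatim, from \cite[Theorem 3.2]{KKPS}, and the only mathematical point the paper itself adds is a regularity remark --- the cited statement produces a diffeomorphism $\psi$ and functions $\tilde f,\tilde g$ that are a priori only $C^1$, and in the smooth setting one upgrades to $C^\infty$ by invoking Moser's twist theorem, following \cite[Remark 3]{KKPS} and \cite{Y}. Your proposal instead sketches a from-scratch KAM scheme in the exact divergence-free category. As a roadmap it is reasonable, and you identify the right mechanisms: the Wronskian as the twist/non-degeneracy condition, exactness as what kills the translational (cohomological) obstruction to persistence of invariant tori for volume-preserving flows, a Newton--Moser iteration over Diophantine frequencies, and the transport of the measure estimate back to the $r$-line with a $\tau^{-1}$ factor. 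But as a proof it is incomplete in exactly the places that carry the content: solvability of the homological equations with tame estimates in the exact divergence-free class, preservation of exactness along the iteration, convergence, and the precise $C\tau^{-1}\sqrt\delta$ bound are all deferred by you as ``main obstacles'' rather than carried out --- and these are precisely what \cite[Theorem 3.2]{KKPS} supplies. Since you end by declaring the assembled statement to be the content of \cite[Theorem 3.2]{KKPS}, your argument in effect reduces to the same citation the paper makes, with the sketch serving as motivation rather than proof.

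The one genuine gap relative to the statement as it is used in the paper concerns regularity. You assert that the limiting $\psi$, $\tilde f$, $\tilde g$ are $C^\infty$ and $C^\infty$-close to the identity (respectively to $f,g$), but \cite[Theorem 3.2]{KKPS} as stated yields only $C^1$ objects, and the balancing of Diophantine exponents against finitely many derivatives that you invoke does not by itself produce smoothness of the conjugacy. The paper closes this gap by noting that, with all data smooth, an application of Moser's twist theorem (see \cite[Remark 3]{KKPS} and \cite{Y}) gives $C^\infty$ regularity of $\psi$, $\tilde f$, $\tilde g$; if you intend to prove the theorem rather than cite it, this step needs an explicit argument and cannot be absorbed into the generic ``quantitative bookkeeping'' of the iteration.
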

The statement above does not correspond exactly to \cite[Theorem 3.2]{KKPS}. Indeed, their statement assumes less regularity of the involved vector fields, and implies the existence of a diffeomorphism $\psi$ and function $\tilde f, \tilde g$ that are only $C^1$ in general. In our context, everything is smooth and the application of Moser's twist theorem \cite{Mos} (see \cite[Remark 3]{KKPS} and \cite{Y} for more details) yields a $C^\infty$-diffeomorphism $\psi$ and $C^\infty$-functions $\tilde f, \tilde g$.\\

\subsection{Non-integrability is residual}

Consider the subset $\mathfrak{X}_\mu^\eta(M) \subset \mathfrak{X}^\mu(M)$ of divergence-free vector fields such that $[\iota_X\mu]=\eta$, where $\eta\in H^2(M,\mathbb{R})$, and we denote by $\mathcal{N} \subset \mathfrak{X}_\mu^\eta(M)$ those vector fields that do not admit any smooth first integral.  That is, if $X\in \mathcal{N}$ then $[\iota_X\mu]=\eta$ and every function $f\in C^\infty(M)$ such that $df(X)=0$ is constant. Let $\mathcal{I}$ be the subset of vector fields in $\mathfrak{X}_\mu^\eta(M)$ that admit an integrable region $U\cong T^2 \times I$ (as in Definition \ref{def:intreg}) somewhere in $M$. The following lemma is completely standard if we keep in mind that we work with non-vanishing vector fields.

\begin{lemma}\label{lem:intequiv}
The sets $\mathfrak{X}_\mu^\eta(M)\setminus \mathcal{N}$ and $\mathcal{I}$ are equal.
\end{lemma}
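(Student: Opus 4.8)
The plan is to prove the two inclusions separately, using the structural results already available. For the inclusion $\mathcal{I} \subseteq \mathfrak{X}_\mu^\eta(M) \setminus \mathcal{N}$, suppose $X \in \mathcal{I}$, so $X$ admits an integrable region $U \cong T^2 \times I$ in which, by Sternberg's Theorem~\ref{thm:Ste}, there are coordinates $(x,y,r)$ with $X|_U = G(x,y,r)(f_1(r)\partial_x + f_2(r)\partial_y)$. Then the coordinate function $r$ is a first integral of $X$ on $U$: indeed $dr(X|_U) = G(x,y,r)(f_1(r)\, dr(\partial_x) + f_2(r)\, dr(\partial_y)) = 0$. The task is to promote this \emph{local} first integral to a \emph{global} non-constant smooth function $F \in C^\infty(M)$ with $dF(X) = 0$. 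First I would shrink to a slightly smaller region $T^2 \times I' \Subset T^2 \times I$, pick a bump function $\chi:I \to [0,1]$ that is constant near each end of $I$ (taking two distinct values, $0$ near one end and $1$ near the other) and strictly increasing somewhere in $I'$, and set $F := \chi(r)$ on $U$, extended by the appropriate constant on each of the (at most two) connected components of $M \setminus U$. Here one uses that $\partial U$ consists of two tori $T^2 \times \{0\}$ and $T^2 \times \{1\}$, each separating a neighborhood, so the locally constant extension is well-defined and smooth, and $F$ is non-constant because $\chi$ is. Hence $dF(X) = 0$ globally and $X \notin \mathcal{N}$, i.e. $X \in \mathfrak{X}_\mu^\eta(M) \setminus \mathcal{N}$.

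For the reverse inclusion $\mathfrak{X}_\mu^\eta(M)\setminus\mathcal{N} \subseteq \mathcal{I}$, suppose $X$ is a non-vanishing volume-preserving vector field with $[\iota_X\mu] = \eta$ admitting a non-constant smooth first integral $F$. By Sard's theorem, $F$ has a regular value $c$ with $F^{-1}(c) \neq \emptyset$; since $M$ is closed and $F$ is non-constant, such a $c$ exists and the level set $S := F^{-1}(c)$ is a non-empty closed embedded surface, compact and without boundary. The key point now is that $X$ is tangent to $S$ (since $dF(X) = 0$) and nowhere vanishing \emph{on} $S$, so $\chi(S) = 0$ and $S$ is a disjoint union of $2$-tori. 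Pick one torus component $T$. In a tubular neighborhood of $T$, parametrize by $(p, r) \in T \times (-\delta, \delta)$ with $T = \{r = 0\}$; since $c$ is a regular value, after shrinking $\delta$ every level $F^{-1}(c + s)$ for $|s|$ small is again a regular level set near $T$ and is a $2$-torus (again by the tangency-plus-nonvanishing argument applied to the nearby level, or by continuity/isotopy of the level sets for a regular value). One can reparametrize so that these nearby level tori are exactly $T^2 \times \{r\}$ for $r$ in some interval $I$, and since $X$ is tangent to each of these level sets, $T^2 \times I$ is an integrable region in the sense of Definition~\ref{def:intreg}. Thus $X \in \mathcal{I}$.

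The main obstacle — which is really the only non-formal point — is ensuring that the level sets involved are genuinely tori, i.e. that one gets a region diffeomorphic to $T^2 \times I$ rather than, say, a single torus fiber or an annulus times a circle with the wrong product structure. This is handled by the observation that any closed surface carrying a nowhere-vanishing vector field has Euler characteristic zero, hence is a torus or Klein bottle, and the orientability forced by the ambient volume form $\mu$ (together with coorientability of $S$ as a regular level set) rules out the Klein bottle; then one must check that the \emph{nearby} regular level sets foliate a neighborhood as a product $T^2 \times I$, which follows from the implicit function theorem applied near the regular value $c$ together with the fact that for a regular value all nearby level sets are diffeomorphic via the gradient flow of $F$. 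The emphasis in the statement that the lemma is ``completely standard if we keep in mind that we work with non-vanishing vector fields'' is precisely the Euler-characteristic argument pinning down the topology of the level sets.
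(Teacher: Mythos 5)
Your reverse inclusion ($X$ has a non-constant first integral $\Rightarrow X\in\mathcal{I}$) is correct and is essentially the paper's own argument: regular value, closed level components carrying the non-vanishing tangent field $X$, Euler characteristic zero plus orientability (coorientation by $dF$ inside the orientable $M$) to pin down the torus, and the (normalized) gradient flow of $F$ to trivialize the nearby regular levels into a product $T^2\times I$.

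There is, however, a genuine flaw in your forward inclusion as written. You take $\chi:I\to[0,1]$ equal to $0$ near one end of $I$ and $1$ near the other, and extend $F=\chi(r)$ by ``the appropriate constant on each of the (at most two) connected components of $M\setminus U$.'' This extension is \emph{not} well-defined in general: the two boundary tori $T^2\times\{0\}$ and $T^2\times\{1\}$ need not lie in different components of $M\setminus U$. For instance, take $M=T^3$ with $X$ a linear flow tangent to the torus fibers of a fibration $T^3\to S^1$ and let $U$ be a thickened fiber; then $M\setminus U$ is connected, and your prescription forces the locally constant extension to equal $0$ near one boundary torus and $1$ near the other, which is contradictory. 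The fact that each boundary torus separates a small neighborhood of itself does not imply it separates $M$. The repair is exactly what the paper does: choose the bump function $\phi(r)$ equal to the \emph{same} constant (say $0$) near both ends $r=0$ and $r=1$, non-constant in between; then $F=\phi(r)$ extends by zero over all of $M\setminus U$ regardless of the topology of the complement, and is a smooth non-constant first integral since $dF(X)=\phi'(r)\,dr(X)=0$ on $U$. With this one-line change your argument goes through and coincides with the paper's proof.
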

\begin{proof}
Assume that a vector field $X$ admits an integrable region $U\cong T^2\times I$ with coordinates $(x,y,r)$. Define a function $f:T^2\times I \longrightarrow \mathbb{R}$ as $f(x,y,r)=\phi(r)$ where $\phi:I\longrightarrow \mathbb{R}$ is a bump function equal to zero near $r=0$ and $r=1$. Then $f$ extends to a globally defined function in $M$ and is a non-constant first integral of $X$. 

Conversely, assume that $X$ admits a non-constant smooth first integral. Then if $c$ is a regular value of $f$, choose $\varepsilon>0$ small enough such that $[c-\varepsilon,c+\varepsilon]$ are all regular values of $f$. Any connected component of $f^{-1}(x)$ with $x\in [c-\varepsilon,c+\varepsilon]$ is necessarily a torus $T$: it is a closed surface, the vector field $X$ is non-vanishing and tangent to it, and the surface is cooriented by the gradient of $f$. Using an auxiliary metric, the flow $\phi_t$ of the vector field $\frac{\nabla f}{df(\nabla f)}$ satisfies near this torus $\pp{}{t}df(\phi_t(x))|_{t=s}=1$. For $t$ near $0$, this shows that $f(\phi_t(x))=f(x)+t$ for any $x\in T$, proving that the fibration induced by $f$ is trivial in the connected component of $f^{-1}([-\varepsilon,\varepsilon])$ that contains $T$. Hence, each connected component of $f^{-1}([c-\varepsilon, c+ \varepsilon])$ is diffeomorphic to $T^2\times I$. The vector field is tangent to each torus fiber, which corresponds to a connected component of each level set $f^{-1}(x)$ with $x\in [c-\varepsilon,c+\varepsilon]$, so $X$ is in $\mathcal{I}$.
\end{proof}

Hence, to show that $\mathcal{N}$ is residual in $\mathfrak{X}_\mu^\eta(M)$, it is enough to show that $\mathcal{I}$ is meager in $\mathfrak{X}_\mu^\eta(M)$.

\begin{theorem}\label{thm:residualint}
Let $M$ be a closed three-manifold with a fixed volume form $\mu$. The set $\mathcal{I}$ is meager in $\mathfrak{X}_\mu^\eta(M)$ equipped with the $C^\infty$-topology. 
\end{theorem}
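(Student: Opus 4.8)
The plan is to exhibit $\mathcal{I}$ as a countable union of closed sets with empty interior; by Baire's theorem its complement $\mathcal{N}$ will then be residual, which is what we want in view of Lemma \ref{lem:intequiv}. Fix a reference metric on $M$ and a countable dense set $D\subset M$, and for $p\in D$, $n\geq 1$ set
$$\mathcal{I}_{p,n}=\bigl\{\,X\in\mathfrak{X}_\mu^\eta(M):\ \overline{B(p,1/n)}\subset\operatorname{int}(U)\ \text{for some integrable region }U\text{ of }X\,\bigr\}.$$
By Lemma \ref{lem:intequiv}, every $X\in\mathcal{I}$ has an integrable region, whose interior contains some such ball, so $\mathcal{I}=\bigcup_{p,n}\mathcal{I}_{p,n}$ and it suffices to show that each $\mathcal{I}_{p,n}$ is nowhere dense, i.e. not $C^\infty$-dense in any nonempty open $\mathcal{O}\subset\mathfrak{X}_\mu^\eta(M)$. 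Given $X_0\in\mathcal{O}\cap\mathcal{I}_{p,n}$ with integrable region $U\supset\overline{B(p,1/n)}$, I will produce $Y\in\mathcal{O}$ admitting a hyperbolic periodic orbit through the open ball $B(p,1/n)$; this property is $C^1$-open, hence $C^\infty$-open, and is incompatible with membership in $\mathcal{I}_{p,n}$, so $\mathcal{O}\not\subset\overline{\mathcal{I}_{p,n}}$.

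All perturbations are carried out on the Hamiltonian structure $\iota_{X_0}\mu$ by adding an exact form $d\beta$ with $\beta$ supported in $U$: this keeps the de Rham class equal to $\eta$ and, for $\beta$ small, keeps the associated vector field non-vanishing, so we stay inside $\mathfrak{X}_\mu^\eta(M)\cap\mathcal{O}$. Write $X_0$ in Sternberg coordinates on $U$ via Theorem \ref{thm:Ste}. If the torus foliation of $U$ has constant slope (zero Wronskian) near $p$, I first add an exact form supported in $U$ that keeps all tori invariant but tilts their slopes, arriving at a field whose Wronskian near $p$ is non-zero in the sense of Definition \ref{def:twKAM}; after the standard volume-preserving normalization this is the setting of Theorem \ref{thm:KAM}. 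Now choose a resonant (rational-slope) invariant torus $T^\ast$ through $p$ and, as in Markus and Meyer \cite{MM}, add an exact form supported in a thin neighborhood of $T^\ast$ inside $U$ so that the circle of periodic orbits of $X_0$ on $T^\ast$ breaks, in Poincar\'e--Birkhoff fashion, into finitely many periodic orbits that are alternately elliptic and hyperbolic; here Theorem \ref{thm:KAM} supplies the persistence of the surrounding Diophantine tori and the twist non-degeneracy, exactly as in the Markus--Meyer argument. Using the freedom in the perturbation I arrange one of the resulting hyperbolic orbits $\gamma$ to pass through $B(p,1/n)$, and take $Y$ to be the resulting field.

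A hyperbolic periodic orbit persists under $C^1$-small perturbations with continuous continuation, so $Y$ has a $C^1$-neighborhood $\mathcal{V}$ in which every field has a hyperbolic periodic orbit meeting $B(p,1/n)$. Such a field $Z$ cannot lie in $\mathcal{I}_{p,n}$: an integrable region $U'$ of $Z$ with $\overline{B(p,1/n)}\subset\operatorname{int}(U')$ is flow-invariant, so it would contain this whole orbit $\gamma$, which would then lie on an invariant two-torus; but the restriction of $Z$ to any torus of an integrable region preserves an induced area form, and a non-vanishing $C^\infty$ vector field on $T^2$ preserving an area form has all orbits closed and parallel or all orbits dense — in either case it has no periodic orbit that is hyperbolic inside the torus, contradicting the hyperbolicity of $\gamma$. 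Hence $\mathcal{V}\cap\mathcal{I}_{p,n}=\emptyset$, which completes the proof, and together with Lemma \ref{lem:intequiv} shows $\mathcal{N}$ is residual in $\mathfrak{X}_\mu^\eta(M)$. The step I expect to be the main obstacle is the construction of $Y$: producing, by an arbitrarily $C^\infty$-small perturbation that is simultaneously volume-preserving and cohomologically trivial, a genuine hyperbolic periodic orbit at a prescribed location on a resonant torus of a twist-integrable region — this is precisely where the quantitative KAM and Poincar\'e--Birkhoff machinery behind Theorem \ref{thm:KAM} must be implemented in the divergence-free setting; the normalization reducing Sternberg's $G\cdot(f_1\partial_x+f_2\partial_y)$ to the form $f_1\partial_x+f_2\partial_y$ of Theorem \ref{thm:KAM}, and the slope-tilting step in the constant-Wronskian case, are secondary technical points.
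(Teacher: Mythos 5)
Your overall architecture is sound and genuinely different from the paper's: the decomposition $\mathcal{I}=\bigcup_{p,n}\mathcal{I}_{p,n}$, the robustness of a hyperbolic periodic orbit, and the incompatibility of such an orbit with an invariant torus of an integrable region (eigenvalue $1$ of the linearized return map in the rational case, no periodic orbits at all in the irrational case) are all correct. But the pivotal step is not proved, and the tools you cite do not supply it. You need to create, by an arbitrarily $C^\infty$-small perturbation that is simultaneously volume-preserving, exact (so as to stay in $\mathfrak{X}_\mu^\eta(M)$), and supported in the integrable region, a genuinely hyperbolic periodic orbit positioned so as to meet $B(p,1/n)$. Theorem \ref{thm:KAM} gives only the persistence of Diophantine tori; it contains no resonance-breaking, no Poincar\'e--Birkhoff statement, and no hyperbolicity conclusion, and Markus--Meyer work in the Hamiltonian category, so their construction would have to be re-implemented for exact divergence-free fields: reduction to an area-preserving return map on an annulus cross-section, verification of the twist from the non-zero Wronskian, an averaging/implicit-function argument showing the chosen resonant torus breaks into non-degenerate saddle and elliptic orbits for your specific exact perturbation, and a phase adjustment placing a saddle through the prescribed ball. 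None of this is routine bookkeeping, and it is exactly the technical heart that the paper avoids: instead of constructing hyperbolic orbits, the paper uses KAM only to show that fields near a non-zero-Wronskian integrable field which remain integrable must have non-constant slope, hence carry rational tori foliated by degenerate periodic orbits, and then invokes Robinson's genericity theorem (with a symplectization argument to fix the cohomology class $\eta$) to conclude that such fields form a meager set.

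A second, smaller underestimate: your ``slope-tilting'' step in the constant-Wronskian case and the ``standard volume-preserving normalization'' are not immediate from Sternberg's form $G\cdot(f_1\partial_x+f_2\partial_y)$, because in Sternberg coordinates $\iota_X\mu$ has coefficients depending on $(x,y)$, so adding a $T^2$-invariant exact form does not directly control the new slope function. The paper handles this by first producing an auxiliary closed stabilizing one-form on a sub-region and applying the Cieliebak--Volkov normal form (\cite[Theorem 3.3]{CV}) to obtain coordinates in which $\iota_X\mu$ is $T^2$-invariant, and only then perturbing the coefficient functions $(h_1,h_2)$; a further argument (using the dense set of irrational tori) is needed to normalize the volume form to $dx\wedge dy\wedge dr$ before KAM applies. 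If you carry out the resonance-breaking honestly you will need the same (or an equivalent) normalization, so these points should not be dismissed as secondary. In short: the skeleton of your proof would work, but the construction of the hyperbolic orbit within the constrained class of perturbations is a genuine gap as written, and filling it requires substantially more work than the paper's route through Robinson's theorem.
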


\begin{proof}
We will show that there exists a set $\mathcal{A}$ that is residual in $\mathcal{I}$ and meager in $\mathfrak{X}_\mu^\eta(M)$.

\begin{prop}
The set $\mathcal{A}$ of vector fields in $\mathcal{I}$ with some integrable region $T^2\times I$ where the slope function is non-constant is residual in $\mathcal{I}$.
\end{prop}
\begin{proof}
Let $U_i$ be a countable basis of open sets of $M$ (which are not necessarily of the form $T^2\times I$). For each $U_i$, let $\mathcal{I}_i$ denote the set of vector fields in $M$ such that $U_i$ is contained in an integrable region of the flow. Similarly let $\mathcal{I}_{\operatorname{tw},i}$ denote those vector fields in $M$ such that $U_i$ is contained in an integrable region $V=T^2\times I$ for which there is some possibly smaller interval $J\subset I$ such that $(T^2\times J)\cap U_i\neq \emptyset$ and the Wronskian of $X$ in $T^2\times J$ is non-zero. Let us show that $\mathcal{I}_{\operatorname{tw},i}$ is $C^\infty$ dense in $\mathcal{I}_i$. Let $X$ be a vector field that admits an integrable region that contains $U_i$, then $U_i\subset V$ where $V\cong T^2\times (-\varepsilon,\varepsilon)$, and without loss of generality, we assume that $(T^2\times \{0\}) \cap U_i \neq \emptyset$. By Theorem \ref{thm:Ste} there are coordinates $(x,y,r)$ for which the flow has the form
\begin{equation}\label{eq:Pster}
X=G(x,y,r)\cdot  \left( f_1(r) \pp{}{x} + f_2(r) \pp{}{y} \right). 
\end{equation}
We claim that, perhaps in a smaller integrable region $\tilde V\cong T^2\times (-\delta,\delta)$ contained in $V$ and intersecting $U_i$, there are coordinates $(x',y',r')$ such that 
$$\omega= \iota_X\mu= h_2(r') dr'\wedge dx' - h_1(r') dr'\wedge dy',$$
for some function $h_1, h_2$ from $(-\delta,\delta)$ to $\mathbb{R}$.
To argue this, we first need to find an auxiliary closed one-form that evaluates positively on $X$. Choose $\tilde V=T^2\times (-\delta,\delta)$ a possibly smaller integrable region intersecting $U_i$, contained in $V$, and satisfying that the slope of the flow $X$ is contained in a very small interval in $S^1$. Then the function $\frac{(f_1(r),f_2(r))}{|(f_1,f_2)|}$ has its image in a small interval in $S^1$ too, and thus there exists a pair of real numbers $(a,b)$ such that $\langle (a,b), (f_1(r),f_2(r)) \rangle>0$ for all $r\in (-\delta,\delta)$. This implies that the closed one-form $\lambda=adx+bdy$ satisfies $\lambda(X)>0$ in $\tilde V$. The pair $(\lambda,\omega)$ defines a stable Hamiltonian structure in $\tilde V$, where the Reeb flow is integrable and parallel to $X$. By \cite[Theorem 3.3]{CV}, there are coordinates $(x',y',r')$ of $\tilde V\cong T^2\times (-\delta,\delta)$ such that the exact form $\omega$ is $T^2$-invariant, and hence can be written in the form $\iota_X\mu= h_2(r') dr'\wedge dx' - h_1(r') dr'\wedge dy'$. 

Rename the coordinates $(x', y', r')$ back to $(x,y,r)$. The volume form can be written as $\mu=F(x,y,r)dx\wedge dy \wedge dr$ for some positive function $F(x,y,r)$, and the vector field $X$ is then $X=\frac{1}{F}(h_1(r)\pp{}{x} + h_2(r)\pp{}{y})$. We can now make an arbitrarily $C^\infty$-small perturbation of $h_1,h_2$, compactly supported in $(-\delta,\delta)$, into a pair of functions $\tilde h_1,\tilde h_2$ such that $\tilde h_1' \tilde h_2 - \tilde h_1 \tilde h_2'\neq 0$ near $0 \in (-\delta,\delta)$. This can be done as follows. If the Wronskian of $X$ does not vanish near $r=0$, the vector field is already in $\mathcal{I}_{tw,i}$. So assume that the Wronskian vanishes there, and assume that $h_2(0)\neq 0$ (the case $h_1(0)\neq 0$ is analogous). We have $h_1'(0)h_2(0)-h_2'(0)h_1(0)=0$ by hypothesis. Let $\tilde h_1$ be a function $C^\infty$-close to $h_1$, which coincides with $h_1$ away from a small neighborhood of the origin, and such that $\tilde h_1(0)=h_1(0)$ and $\tilde h_1'(0)\neq 0$. It follows that $\tilde h_1'(0) h_2(0)- h_2'(0)\tilde h_1(0)\neq 0$. Choosing $\tilde h_2=h_2$, the pair $(\tilde h_1,\tilde h_2)$ is the required perturbation. 

The divergence free vector field $Y$ defined by $\iota_Y\mu = \tilde h_2 dr\wedge dx -\tilde h_2 dr\wedge dy$ is arbitrarily $C^\infty$-close to $X$. It has non-vanishing Wronskian in $T^2\times (-\delta',\delta')$ for some $\delta'<\delta$, and $\iota_Y\mu$ is exact in $V$, thus $[\iota_Y\mu]=[\iota_X\mu]$ and $Y \in \mathcal{I}_{\operatorname{tw},i}$. We have thus shown that $\mathcal{I}_{\operatorname{tw},i}$ is dense in $\mathcal{I}_i$.  \\

 We will now prove that there is an open neighborhood $\mathcal{A}_i$ of $\mathcal{I}_{\operatorname{tw},i}$ in $\mathcal{I}_i$ such that every flow in $\mathcal{A}_i$ admits an integrable region with non-constant slope. Consider a flow $X$ in $\mathcal{I}_{\operatorname{tw},i}$, hence as before there is an integrable region $V$ containing $U_i$ where $X$ takes the form \eqref{eq:Pster} and satisfies somewhere in $V\cap U_i$ that the Wronskian is non-zero. Choose some $\tilde V\cong T^2\times (-\varepsilon,\varepsilon)\subset V$ such that $X|_{\tilde V}$ has non-zero Wronskian everywhere in $\tilde V$, and $\tilde V\cap U_i \neq \emptyset$. Reparametrize $X$ to $\tilde X=H\cdot X$ where $H$ is a smooth positive function equal to $\frac{1}{G(x,y,r)}$ in $\tilde V$. Then $\tilde X$ has the form
$$ \tilde X=f_1(r)\pp{}{x}+f_2(r)\pp{}{y}, $$
in $\tilde V$ and has non-zero Wronskian. The vector field $\tilde X$ preserves the volume form $\tilde \mu = G\mu$, and the volume form $\mu$ is in these coordinates of the form $\mu=F(x,y,r)dx\wedge dy \wedge dr$. Notice that the closed two-form $\iota_{\tilde X}G\mu$ has the expression
$$\iota_{\tilde X} G\mu=f_1FGdy\wedge dr -f_2FGdx\wedge dr,$$
and arguing as before, this form vanishes when restricted to $T^2\times \{0\}$, and thus it must be exact. We claim that using that $\tilde X$ preserves $G\mu$ and that it has non-zero Wronskian implies that $G\mu$ must be of the form $\rho(r)dx\wedge dy \wedge dr$. This can be seen as follows. Write $G\mu=\rho(x,y,r) dx\wedge dy \wedge dr$, where $\rho=F\cdot G$. The closedness of $\iota_{\tilde X}\rho dx\wedge dy \wedge dr$ and of $\iota_{\tilde X}dx\wedge dy \wedge dr$ imply that $\iota_XdH=0$ and hence that $H$ is a first integral of $X$. On the other hand, there is a dense set of values $a\in I$ for which $X$ is an irrational rotation on the torus $T^2\times \{a\}$, a flow which does not admit any first integral. This implies that the function $H$ must be constant on each of these tori and hence in each torus fiber, which means that $H=H(r)$. 

Changing the coordinate $r$ in $I$ we can then assume that $\tilde \mu=dx\wedge dy\wedge dr$. We are in the hypotheses of Theorem \ref{thm:KAM}, so there exists $\delta_0>0$ such that for $\delta<\delta_0$, any $\tilde \mu$-exact divergence-free vector field $Z$ satisfying
$$ \norm{Z-\tilde X}_{C^\infty} <\delta $$
satisfies the conclusions of the KAM theorem. Let $Y\in \mathcal{I}_i$ be a vector field such that
$$ \norm{Y-X}_{C^\infty}<\delta_1. $$
Observe that since $[\iota_Y\mu]=[\iota_X\mu]=\eta$, the vector field $Y$ is also exact in $T^2\times I$.
Taking $\delta_1$ small enough (this only depends on the $C^\infty$ norm of $H$), it implies that
$$ \norm{H\cdot Y-\tilde X}_{C^\infty}<\delta. $$
Furthermore $\tilde Y=H\cdot Y$ satisfies $\iota_{\tilde Y}G\mu=\iota_Y\mu$, and since $[\iota_Y\mu]=[\iota_X\mu]$, it follows that $\tilde Y$ is exact in $\tilde V$ with respect to $dx\wedge dy \wedge dt$.

 If $\delta$ is small enough, the theorem applies to $\tilde X$ and $\tilde Y$, so there exists a Borel set $Q\subset (-\varepsilon,\varepsilon)$ and smooth functions $\tilde f(t), \tilde g(t)$ satisfying the conclusions of Theorem \ref{thm:KAM}. Since the Borel set $Q$ is of almost full measure, and the rotation vector $(\tilde f(t),\tilde g(t))$ is close to the rotation vector $(f(t),g(t))$ in the invariant tori, we deduce that $\tilde Y$ inherits a set of almost full measure of tori that have different irrational rotation vectors. By assumption $Y$ admits an integrable region $V_2$ that contains $U_i$, and the invariant tori inherited from the KAM theorem are minimal, so these tori are necessarily fibers of the integrable region $V_2$ of $\tilde Y$. In the integrable region $V_2$ intersecting $U_i$, the slope of $\tilde Y$ cannot be constant, since there are invariant tori with different irrational rotation vectors. Since $Y$ is parallel to $\tilde Y$, the same conclusion holds for $Y$. Thus, we have shown that there exists an open neighborhood $\mathcal{A}_i$ of $\mathcal{I}_{\operatorname{tw},i}$ inside $\mathcal{I}_i$ such that every element in $\mathcal{A}_i$ has an integrable region that intersects $U_i$ and has non-constant slope near some torus intersecting $U_i$. Since $\mathcal{I}_{\operatorname{tw},i}$ is dense in $\mathcal{I}_i$, the set $\mathcal{A}_i$ is open and dense and thus the complement $\mathcal{I}_i\setminus \mathcal{A}_i$ is nowhere dense in $\mathcal{I}_i$. In particular
$$ \mathcal{B}=\bigcup_i \mathcal{I}_i\setminus \mathcal{A}_i  $$
is meager in $\mathcal{I}$. Notice that $\mathcal{I}\setminus \mathcal{A} \subset \mathcal{B}$, and thus $\mathcal{I}\setminus \mathcal{A}$ is also meager in $\mathcal{I}$. We conclude that $\mathcal{A}$ is residual in $\mathcal{I}$.
\end{proof}

To conclude, we will show that $\mathcal{A}$ is meager in $\mathfrak{X}_\mu^\eta(M)$.  Let $\mathcal{P}$ be the set of vector fields preserving $\mu$ such that every periodic orbit is non-degenerate. Recall that a periodic orbit is non-degenerate if the linearized Poincar\'e return map does not have any root of unity as an eigenvalue. In particular, a non-degenerate periodic orbit is isolated. Notice that any element in $\mathcal{A}$ admits an invariant torus with a rational rotation vector, which is filled with non-isolated and hence degenerate periodic orbits. Thus, if we show that the set $\mathcal{P}^\eta=\mathcal{P}\cap \mathfrak{X}_\mu^\eta(M)$ is residual in $\mathfrak{X}_\mu^\eta$, we will conclude that $\mathcal{A}$ is meager.

Robinson \cite{Rob} proved that $\mathcal{P}$ is residual in $\mathfrak{X}^\mu(M)$, so $\mathcal{P}=\bigcap_{N \in \mathbb{N}} \mathcal{P}_N$ for some open and dense sets $\mathcal{P}_N$ in $\mathfrak{X}^\mu(M)$. Hence $\mathcal{P}_N^\eta=\mathcal{P}_N\cap \mathfrak{X}_\mu^\eta(M)$ is open in $\mathfrak{X}_\mu^\eta(M)$. It remains to show that $\mathcal{P}_N^\eta$ is dense in $\mathfrak{X}_\mu^\eta(M)$ to conclude that $\mathcal{P}^\eta=\bigcap_{N=1}^\infty \mathcal{P}_N^\eta$ is residual in $\mathfrak{X}_\mu^\eta$. To see that $\mathcal{P}_N^\eta$ is dense, we will directly show that $\mathcal{P}^\eta$ is dense by a symplectization procedure. Let $X$ be a divergence-free vector field such that $[\iota_X\mu]=\eta$ is an exact non-degenerate two-form. Endow $M\times (-\varepsilon,\varepsilon)$ with the symplectic form
$$ \omega= d(t\lambda)+\iota_X\mu, $$
where $\lambda$ is some one-form such that $\lambda(X)=1$ and $t$ is the coordinate in $(-\varepsilon,\varepsilon)$. The vector field $X$ coincides with the Hamiltonian vector field defined by the function $H=t$ along $M\times \{0\}$. By \cite[Theorem 1]{Rob} there is an arbitrarily $C^\infty$-close function $\tilde H$ such that the Hamiltonian flow along $\{\tilde H=0\}\cong M$ is non-degenerate. Since $\{\tilde H=0\}$ is $C^\infty$-close to $M\times \{0\}$, the restriction of $\omega$ to $\{\tilde H=0\}$ is a non-degenerate closed two-form $\tilde \omega$ that is $C^\infty$-close to $\iota_X\mu$ and $[\tilde \omega]=\eta$. We conclude that the vector field $Y$ in $M$ defined by the equation $\iota_Y\mu=\eta$ is arbitrarily $C^\infty$-close to $X$ and lies in $\mathcal{P}^\eta$. We have thus shown that $\mathcal{P}_N^\eta$ is open and dense in $\mathfrak{X}_{\mu}^\eta(M)$ for each $N$, and we conclude that $\mathcal{P}^\eta=\bigcap_{N=1}^\infty \mathcal{P}_N^\eta$ is residual in $\mathfrak{X}_{\mu}^\eta(M)$.

Since any element in $\mathcal{A}$ admits an invariant torus with a rational rotation vector, and non-degenerate periodic orbits are necessarily isolated, we deduce that $\mathcal{A}\subset \mathfrak{X}_{\mu}^\eta(M) \setminus \mathcal{P}$ and hence $\mathcal{A}$ is meager in $\mathfrak{X}_{\mu}^\eta(M)$. \\

We are ready to show that $\mathcal{I}$ is meager in $\mathfrak{X}_\mu^\eta(M)$, which follows from the properties of residual and meager sets. We include the argument for completeness. We have shown that $\mathcal{A}$ is meager in $\mathfrak{X}_\mu^\eta(M)$, hence 
$$ \mathcal{A}=\bigcup_i A_i, \enspace A_i \subset \mathfrak{X}_\mu^\eta \text{ nowhere dense in } \mathfrak{X}_\mu^\eta(M). $$
On the other hand, $\mathcal{I}\setminus \mathcal{A}$ is meager in $\mathcal{I}$ so
$$\mathcal{I}\setminus \mathcal{A}= \bigcup_i A_i', \quad A_i' \subset \mathcal{I} \text{ nowhere dense in } \mathcal{I}.$$
Since $A_i'$ is nowhere dense in $\mathcal{I}$, it is nowhere dense in $\mathfrak{X}_\mu^\eta(M)$ too and we have
$$\mathcal{I}=\left(\bigcup_i A_i \right) \cup \left(\bigcup_i A_i'\right), \enspace A_i', A_i \subset \mathfrak{X}_\mu^\eta(M) \text{ nowhere dense in } \mathfrak{X}_\mu^\eta(M).$$
We conclude that $\mathcal{I}$ is meager in $\mathfrak{X}_\mu^\eta(M)$, finishing the proof of the theorem.
\end{proof}
Combining Theorem \ref{thm:residualint} and Lemma \ref{lem:intequiv}, we deduce that a generic element in $\mathfrak{X}_\mu^\eta$ admits no smooth first integral. We have restricted to smooth vector fields but probably a similar statement holds for $C^k$-vector fields endowed with the $C^k$-topology, as long as $k$ is large enough to apply the KAM theorem, for example $k\geq 4$ suffices. \\

Theorem \ref{thm:residualint} holds as well if we consider the whole set of (possibly somewhere vanishing) volume-preserving vector fields, which we denote by $\mathcal{X}_{\eta}^\mu(M)$ to distinguish it from the set $\mathfrak{X}_\mu^\eta(M)$. The only difference in the proof is that one has to use that vector fields in $\mathfrak{X}_\mu^\eta(M)$ with isolated zeroes define an open and dense set, and that Lemma \ref{lem:intequiv} holds in this open set. A particularly interesting case from a hydrodynamics point of view is the set of exact divergence-free vector fields, i.e. when $[\eta]=[0]$.

\begin{corollary}\label{cor:exnonint}
Let $M$ be a closed three-manifold, and $\mathfrak{X}_{0}^\mu(M)$ the set of exact divergence-free vector fields with respect to a volume form $\mu$. The set of flows that admit no smooth first integral is residual in $\mathfrak{X}_{0}^\mu(M)$.
\end{corollary}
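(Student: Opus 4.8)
The plan is to read the statement as the specialization of Theorem \ref{thm:residualint} to the cohomology class $\eta=0\in H^2(M,\mathbb{R})$: indeed $\mathfrak{X}_0^\mu(M)$ is by definition the space of divergence-free fields $X$ with $\iota_X\mu$ exact, i.e.\ the ``$\eta=0$'' slice, except that we no longer require $X$ to be nowhere zero. Combining Theorem \ref{thm:residualint} with Lemma \ref{lem:intequiv} already yields the assertion for nowhere-vanishing exact divergence-free fields, so the only genuine task is to promote both statements to the setting that allows isolated zeros, exactly as announced in the paragraph preceding the corollary.

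First I would isolate the open dense subset $\mathcal{O}\subset\mathfrak{X}_0^\mu(M)$ of fields all of whose zeros are isolated, equivalently the $X$ for which $\iota_X\mu$, regarded as a section of $\Lambda^2 T^*M$, is transverse to the zero section; openness is immediate and density follows from a jet-transversality argument applied to primitives of $\iota_X\mu$ (one may prescribe the value of $d\alpha$ at any point by varying $\alpha$, so transversality to the codimension-three zero section is generic, forcing isolated zeros). A residual subset of $\mathcal{O}$ is residual in $\mathfrak{X}_0^\mu(M)$, so it suffices to argue inside $\mathcal{O}$. Next I would check that Lemma \ref{lem:intequiv} holds verbatim on $\mathcal{O}$: the implication ``integrable region $\Rightarrow$ non-constant first integral'' is proved by the same bump function in the transverse coordinate and is insensitive to zeros, while for the converse one picks a regular value $c$ of a non-constant first integral $f$ whose preimage avoids the finitely many zeros of $X$ (possible since those zeros exclude only finitely many values of $f$), so that $X$ is nowhere zero on $f^{-1}([c-\varepsilon,c+\varepsilon])$ and the Euler-characteristic argument produces an integrable region $T^2\times I$ (Definition \ref{def:intreg}) on which $X$ does not vanish. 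Hence, on $\mathcal{O}$, ``admits no smooth first integral'' is precisely the complement of the set $\mathcal{I}$ of fields possessing such an integrable region.

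It then remains to rerun the proof of Theorem \ref{thm:residualint} to conclude that $\mathcal{I}$ is meager in $\mathcal{O}$ (equivalently in $\mathfrak{X}_0^\mu(M)$). The construction of the residual-in-$\mathcal{I}$ subset $\mathcal{A}$ of fields with an integrable region of non-constant slope is entirely local on integrable regions, where $X$ is automatically nowhere zero, so Sternberg's normal form (Theorem \ref{thm:Ste}), the Wronskian perturbation, and the KAM input (Theorem \ref{thm:KAM}) apply unchanged. The meagerness of $\mathcal{A}$ in the ambient space comes, as before, from the fact that an $\mathcal{A}$-field carries a rational invariant torus filled with non-isolated, hence degenerate, periodic orbits, so $\mathcal{A}$ is disjoint from the set of fields all of whose periodic orbits are non-degenerate; one needs this last set to be residual in $\mathfrak{X}_0^\mu(M)$, which is the volume-preserving genericity result of Robinson \cite{Rob}, applied in the exact (i.e.\ cohomology-preserving) category. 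Then $\mathcal{I}\setminus\mathcal{A}$ is meager in $\mathcal{I}$ and $\mathcal{A}$ is meager in $\mathfrak{X}_0^\mu(M)$, so $\mathcal{I}$ is meager and the fields with no smooth first integral form a residual set.

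The main obstacle I expect is precisely the genericity of non-degenerate periodic orbits \emph{for exact divergence-free fields with zeros allowed}: the symplectization argument used for this step in the proof of Theorem \ref{thm:residualint} literally requires a global one-form $\lambda$ with $\lambda(X)=1$, which is unavailable when $X$ vanishes (and, by $C^0$-stability of the index, nowhere-vanishing fields are not dense in $\mathfrak{X}_0^\mu(M)$ in general). I expect this to be handled by applying the symplectization argument on the complement of the finitely many zeros and adding a separate, finite-dimensional genericity perturbation supported in small balls around each zero that makes the zeros themselves non-degenerate and removes degenerate periodic orbits from their neighborhoods; all such perturbations can be chosen exact and $C^\infty$-small, preserving membership in $\mathfrak{X}_0^\mu(M)$.
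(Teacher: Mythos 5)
Your proposal takes essentially the same route as the paper: the corollary is only sketched there, in the paragraph preceding its statement, as the case $\eta=0$ of Theorem \ref{thm:residualint} extended to possibly vanishing fields, and the two ingredients invoked are exactly the ones you elaborate (open density of the fields with isolated/non-degenerate zeros, and validity of Lemma \ref{lem:intequiv} on that set), the rest of the argument being rerun on integrable regions, where the field is automatically non-vanishing. Two remarks on the points where you go beyond that sketch. First, ``all zeros isolated'' is not literally an open condition; what is open (and dense, by your transversality argument) is that $\iota_X\mu$ is transverse to the zero section, and since you effectively define $\mathcal{O}$ that way nothing is lost. Second, the obstacle you flag is real --- the symplectization step in the proof of Theorem \ref{thm:residualint} does use a one-form with $\lambda(X)=1$, and the paper is silent about this in the vanishing case --- but your proposed patch is not quite the right shape: making the zeros non-degenerate, and perturbing in small balls around them, says nothing about degenerate closed orbits, which are not localized near the zeros. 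The cleaner repair is that genericity of non-degenerate closed orbits (of period at most $N$, for each $N$) for volume-preserving fields is obtained, as in Robinson \cite{Rob} or by a Kupka--Smale-type argument, through perturbations supported in small flow boxes around points of the orbits, and any divergence-free perturbation supported in a ball is exact with compactly supported primitive (since the compactly supported $H^2$ of a ball vanishes), hence stays in $\mathfrak{X}_0^\mu(M)$; so the fixed-cohomology constraint is harmless and no symplectization is needed where the field vanishes.
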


As discussed in \cite[Remark p 2039]{EPST}, Corollary \ref{cor:exnonint} in the whole set of exact divergence-free vector fields can be used to extend the proof of the main theorem in \cite{EPST} about the uniqueness properties of the helicity in the set of exact divergence-free vector fields of $C^1$-regularity to those vector fields of $C^\infty$-regularity, and similarly for $C^k$-regularity with $k\geq 4$. The uniqueness properties of the helicity for smooth or $C^k$ volume-preserving vector fields for any $k\geq 4$ were established in \cite{KPSY} using a different approach. \\

Theorem \ref{thm:residualint} has a symplectic interpretation in terms of closed non-degenerate two-forms and embedded hypersurfaces.
\begin{corollary}\label{cor:HamNoInt}
On a closed three-dimensional manifold $M$, the characteristic foliation of a generic Hamiltonian structure within the set of Hamiltonian structures with a given cohomology class in $H^2(M,\mathbb{R})$ admits no smooth first integral. On a four-dimensional symplectic manifold $(W,\Omega)$, the characteristic foliation of a generic embedded hypersurface admits no smooth first integral.
\end{corollary}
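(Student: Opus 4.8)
The plan is to deduce both assertions of Corollary~\ref{cor:HamNoInt} from Theorem~\ref{thm:residualint} together with Lemma~\ref{lem:intequiv}, the only genuine work being the transport of the first, abstract statement to hypersurfaces by means of the appendix lemma.

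For the first assertion I would fix, once and for all, an auxiliary volume form $\mu$ on $M$ and use the standard dictionary between Hamiltonian structures and volume-preserving fields: a two-form $\omega$ on $M$ is closed, maximally non-degenerate and satisfies $[\omega]=\eta$ if and only if the field $X$ determined by $\iota_X\mu=\omega$ is non-vanishing, preserves $\mu$, and has $[\iota_X\mu]=\eta$; the correspondence $\omega\mapsto X$ is a homeomorphism for the $C^\infty$-topologies between the space of Hamiltonian structures with class $\eta$ and $\mathfrak{X}_\mu^\eta(M)$. The characteristic foliation of $\omega$ is spanned by this very $X$, so a smooth first integral of the characteristic foliation is exactly a non-constant $f\in C^\infty(M)$ with $\iota_X\,df=0$; thus the foliation admits no first integral precisely when $X\in\mathcal{N}$. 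By Lemma~\ref{lem:intequiv} we have $\mathfrak{X}_\mu^\eta(M)\setminus\mathcal{N}=\mathcal{I}$, and Theorem~\ref{thm:residualint} says $\mathcal{I}$ is meager, so $\mathcal{N}$ is residual; transporting back through the dictionary gives the first claim. (The conclusion does not depend on the choice of $\mu$, consistently with the fact that the characteristic foliation does not either.)

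For the second assertion I would fix an embedded hypersurface $M_0\subset W$. Since $W$ is symplectic, hence orientable, and $M_0$ carries the Hamiltonian structure $\omega_0=\iota_{M_0}^*\Omega$, the hypersurface $M_0$ is orientable and coorientable, so a $C^\infty$-small neighbourhood of $M_0$ in $\mathcal{HS}(W)$ consists of hypersurfaces $e(M_0)$ for embeddings $e\colon M_0\to W$ that are $C^\infty$-close to the inclusion, the characteristic foliation being insensitive to reparametrization. Each such $e$ induces the Hamiltonian structure $e^*\Omega$ on $M_0$, and the homotopy formula forces $[e^*\Omega]=[\omega_0]=:\eta$ along any isotopy, so this whole neighbourhood lands in the $\eta$-slice. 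The assignment $e\mapsto e^*\Omega$ is a (tame) submersion onto a neighbourhood of $\omega_0$ in that slice — pointwise surjectivity of its differential follows from the non-degeneracy of $\Omega$, while the surjectivity onto a neighbourhood, with control of the realizing isotopy, is exactly what Lemma~\ref{lem:app} provides. Being continuous and open, $e\mapsto e^*\Omega$ pulls back the residual set produced by the first assertion to a residual subset of hypersurfaces near $M_0$; since $e(M_0)$ and $e^*\Omega$ have the same characteristic foliation, this is a neighbourhood of $M_0$ consisting generically of hypersurfaces with no first integral. As $\mathcal{HS}(W)$ is a Lindelöf Baire space, a set that is residual in a neighbourhood of each of its points is residual, which yields the second claim.

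The routine ingredients here — the dictionary, the homotopy formula locking the cohomology class, and the covering/Baire bookkeeping — are standard. The one point that genuinely requires input is the openness of $e\mapsto e^*\Omega$ onto the fixed-class slice, equivalently the statement that every cohomologous $C^\infty$-small deformation of the induced Hamiltonian structure can be realized by moving the hypersurface while keeping the isotopy under control. This is precisely the role of the appendix, so I regard the main obstacle as already isolated there; beyond it the proof is a direct transport of Theorem~\ref{thm:residualint}.
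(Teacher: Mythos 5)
Your proposal is correct and follows essentially the same route as the paper: the contraction-with-$\mu$ dictionary plus Theorem \ref{thm:residualint} and Lemma \ref{lem:intequiv} for the Hamiltonian-structure statement, and continuity of $e\mapsto e^*\Omega$ together with Lemma \ref{lem:app} (realizing cohomologous $C^\infty$-small perturbations by moving the hypersurface) for the hypersurface statement. Your extra Lindel\"of/Baire gluing step merely strengthens the per-isotopy-class genericity the paper settles for; just note that the phrase ``residual in a neighbourhood of each of its points'' should read ``residual in a neighbourhood of every hypersurface,'' which is what your construction actually provides.
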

\begin{proof}
    The statement for Hamiltonian structures is immediate from the fact that elements in $\mathfrak{X}_\mu^\eta (M)$ and Hamiltonian structures with cohomology class $\eta$ are identified by contraction with $\mu$. The statement for embedded hypersurfaces follows from three observations. The first is that two isotopic embedded hypersurfaces $i_0:M_0\rightarrow W$ and $i_1:M_1\rightarrow W$ in $(W,\Omega)$ have cohomologous Hamiltonian structure $\omega_0=i_0^*\Omega$ and $\omega_1=i_1^*\Omega$. The second is that $C^\infty$-close hypersurfaces have $C^\infty$-close induced Hamiltonian structures, and the third (and perhaps less immediate one) is that a sufficiently $C^\infty$-small exact perturbation $\tilde \omega$ of a Hamiltonian structure $\omega$ induced in a hypersurface $M$ is induced in a hypersurface $\tilde M$ that is $C^\infty$-close to $M$, as per Lemma \ref{lem:app} in the Appendix. These observations imply that an open and dense property in $\mathfrak{X}_\mu^\eta(M)$ is also an open and dense property in the set of Hamiltonian structures with cohomology class $\eta$ induced on embedded hypersurfaces in $(W,\Omega)$, and thus also open and dense in the set of embeddings. Thus, given the set $A$ of embedded hypersurfaces within an isotopy class (whose induced Hamiltonian structures are in some cohomology class $\eta$, for example), any residual property in $\mathfrak{X}_\mu^\eta(M)$ is also residual in $A$. This holds for any isotopy class and thus proves the statement.
\end{proof}

\section{Stability is not open}\label{s:notgeneric}

We proceed to the proof of the main Theorem \ref{thm:main}. We will first prove it under a simplifying assumption, and then show that one can reduce to this case by a preliminary $C^1$-perturbation of the hypersurface.

\subsection{Proof under a simplifying assumption}\label{ss:mainsimp}
The goal of this section is to prove Theorem \ref{thm:main} under the simplifying assumption that the stable Hamiltonian structure $(\lambda,\omega)$ has a non-constant proportionality factor $d\lambda/\omega$. In the final section, we will reduce to this case. Recall that $\mathcal{HS}(W)$ denotes the space of embedded hypersurfaces in $W$.

\begin{theorem}\label{thm:mainsimp}
Let $M\rightarrow (W,\Omega)$ be a closed stable hypersurface in a four-dimensional symplectic manifold. Assume that an induced stable Hamiltonian structure $(\lambda,\omega)$ is such that $d\lambda=f\omega$ with $f$ non-constant. Then there exists a stable hypersurface $\tilde M\subset W$, stable isotopic and $C^0$-close to $M$, and a $C^\infty$-open neighborhood $\mathcal{U}$ of $\tilde M$ in $\mathcal{HS}(W)$ such that a generic element in $\mathcal{U}$ is not stable.
\end{theorem}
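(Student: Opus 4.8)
The plan is to build $\tilde M$ by two successive deformations, and then to combine the robustness of the resulting dynamical obstructions with the trichotomy for Reeb flows of stable Hamiltonian structures in dimension three and with the generic non-integrability result of Theorem~\ref{thm:residualint}. First, since $f=d\lambda/\omega$ is non-constant, Proposition~\ref{prop:SHSht} applies directly and produces an arbitrarily $C^0$-close, stable isotopic hypersurface $M'$ admitting a positive and a negative rational half-twisting integrable region on which an associated stable Hamiltonian structure is $T^2$-invariant. Feeding $M'$ into Theorem~\ref{thm:SHSdelta}, I obtain a stable hypersurface $\tilde M$, stable isotopic to $M'$ and arbitrarily $C^\infty$-close to it, whose characteristic foliation admits a positive and a negative $\partial$-strong partial section with non-degenerate binding orbits. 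Since the first deformation is $C^0$-small and the second is $C^\infty$-small, $\tilde M$ is $C^0$-close to $M$; the two stable isotopies compose into a single one using the gluing remark for stable homotopies recalled in Section~\ref{ss:SHS}.

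\paragraph{Robustifying the obstruction.} Next I would fix a nowhere-vanishing section $\tilde X$ of the characteristic line bundle of $\tilde\omega=\iota^*\Omega$. By Corollary~\ref{cor:robobs} there is a $C^1$-neighborhood $\mathcal V\subset\mathfrak X(M)$ of $\tilde X$ in which no vector field is Reeb-like; shrinking $\mathcal V$ if necessary, Proposition~\ref{prop:robpartial} ensures that every vector field in $\mathcal V$ still carries a positive (resp.\ negative) $\partial$-strong partial section, hence by Proposition~\ref{prop:obsGCS} admits no global cross-section. So every field in $\mathcal V$ fails simultaneously to be Reeb-like and to possess a global cross-section. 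Since both properties depend only on the oriented characteristic foliation, the same holds for any positive rescaling; this lets me pass freely between vector fields spanning the same line bundle.

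\paragraph{From vector fields back to hypersurfaces.} Now fix a volume form $\mu$ on $M$ and set $\eta=[\tilde\omega]\in H^2(M;\mathbb R)$. By Lemma~\ref{lem:app} in the appendix, every hypersurface $N$ in a small enough $C^\infty$-neighborhood $\mathcal U$ of $\tilde M$ in $\mathcal{HS}(W)$ is the image of an embedding whose induced Hamiltonian structure $\omega_N$ is $C^\infty$-close to $\tilde\omega$ and cohomologous to it; contracting with $\mu$ identifies $\omega_N$ with a vector field $Y_N\in\mathfrak X_\mu^\eta(M)$ that is $C^\infty$-close (in particular $C^1$-close) to the field $X$ determined by $\iota_X\mu=\tilde\omega$. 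After shrinking $\mathcal U$ so that all these $Y_N$ lie, up to rescaling, in $\mathcal V$, I conclude that the characteristic foliation of every $N\in\mathcal U$ is neither Reeb-like nor admits a global cross-section. If such an $N$ were stable, its characteristic foliation would be the Reeb field of an induced stable Hamiltonian structure, and by the trichotomy of Section~\ref{ss:SHS} it would then have to admit a non-trivial smooth first integral. Hence the stable hypersurfaces in $\mathcal U$ form a subset of those $N\in\mathcal U$ whose associated $Y_N$ admits a first integral. By Theorem~\ref{thm:residualint} (equivalently, by the hypersurface version in Corollary~\ref{cor:HamNoInt}, which transports residuality in $\mathfrak X_\mu^\eta(M)$ to residuality among hypersurfaces near $\tilde M$ via Lemma~\ref{lem:app}), this subset is meager in $\mathcal U$, so a generic element of $\mathcal U$ is unstable, which is the claim.

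\paragraph{Expected main difficulty.} The delicate step is the last one: setting up the dictionary between $C^\infty$-nearby hypersurfaces and cohomologous $\mu$-preserving vector fields so that (i) the $C^1$-robust ``not Reeb-like / no global cross-section'' properties genuinely survive the identification, and (ii) the residuality of Theorem~\ref{thm:residualint}, which is stated inside $\mathfrak X_\mu^\eta(M)$, transports to a residual subset of $\mathcal U\subset\mathcal{HS}(W)$. Both points rest on Lemma~\ref{lem:app} and on the foliation-only nature of the relevant properties. Verifying that all intermediate isotopies are stable (already arranged inside Proposition~\ref{prop:SHSht} and Theorem~\ref{thm:SHSdelta}) and gluing them is then routine.
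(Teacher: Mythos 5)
Your proposal is correct and follows essentially the same route as the paper: deform via Proposition \ref{prop:SHSht} and Theorem \ref{thm:SHSdelta} to get robust signed $\partial$-strong partial sections, use Corollary \ref{cor:robobs} and Proposition \ref{prop:obsGCS} to rule out Reeb-likeness and global cross-sections on a $C^\infty$-neighborhood, and then combine the trichotomy with the generic non-integrability of Theorem \ref{thm:residualint} (transported to hypersurfaces exactly as in Corollary \ref{cor:HamNoInt} via Lemma \ref{lem:app}). The only cosmetic difference is that you spell out the vector-field dictionary slightly more explicitly than the paper, which folds it into Corollary \ref{cor:HamNoInt}.
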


\begin{proof}[Proof of Theorem \ref{thm:mainsimp}]
Let $M$ be a stable embedded hypersurface in $(W,\Omega)$ and $(\lambda,\omega)$ an induced stable Hamiltonian structure in $M$ which by assumption can be chosen such that $f=\frac{d\lambda}{\omega}$ is not constant. By Proposition \ref{prop:SHSht} and Theorem \ref{thm:SHSdelta}, there exists a stable isotopic $C^0$-close hypersurface $\tilde M\subset W$ such that the characteristic foliation of $\tilde M$ admits a positive and a negative $\partial$-strong partial section with non-degenerate binding periodic orbits. Let $(\tilde \lambda,\tilde \omega)$ be a stable Hamiltonian structure on $\tilde M$, where $\tilde \omega$ is the pullback of $\Omega$ by the inclusion of $\tilde M$ in $W$. Then, by Proposition \ref{prop:robpartial} there exists a neighborhood $\mathcal{U}\subset \mathcal{HS}(W)$ of $\tilde M$ such that the characteristic foliation of every hypersurface in $\mathcal{U}$ admits a positive and a negative partial section. The characteristic foliation of any hypersurface in $\mathcal{U}$ is not Reeb-like by Corollary \ref{cor:robobs} and does not admit a global cross-section by Proposition \ref{prop:obsGCS}.

This shows that if an element of $\mathcal{U}$ is stable, then any defining stable Hamiltonian structure $(\tilde \lambda,\tilde \omega)$ is such that $d\tilde \lambda=f \tilde \omega$ with $f$ non-constant. Indeed, if $f$ is constant and $f\neq 0$ then $X$ is Reeb-like, and if $f\equiv 0$ then $X$ admits a cross-section. By Corollary \ref{cor:HamNoInt} there exists a residual set $\mathcal{A}$ in $\mathcal{U}$ such that the characteristic foliation of an element in $\mathcal{A}$ admits no smooth first integral. Hence, a section $X$ of the characteristic of a hypersurface in $\mathcal{A}$ is not Reeb-like, admits no cross-section, and admits no smooth first integral. We conclude that such hypersurface is necessarily unstable, concluding the proof.
\end{proof}

The previous proof applies as well to the flows we constructed in Theorem \ref{thm:Reeb} and shows that stable-like flows are neither $C^\infty$-open nor locally generic in $\mathfrak{X}_\mu^\eta(M)$. Since $\mathfrak{X}_\mu^\eta(M)$ is endowed with the subspace topology, we deduce as well that stable-like flows are not $C^\infty$-open in $\mathfrak{X}_\mu(M)$. It is not clear, however, if stable-like flows are generic when we don't fix the cohomology class of $\iota_X\mu$, i.e. if they are generic in $\mathfrak{X}_\mu(M)$ for an arbitrary three-manifold $M$. We know it is not the case on rational homology spheres, since there $\mathfrak{X}_\mu(M)=\mathfrak{X}_\mu^0(M)$.

We can also deduce that a larger class of volume-preserving flows on a closed three-manifold is not $C^\infty$-open. 
\begin{defi}[\cite{PRT}]
A volume-preserving vector field $X$ is Eulerisable if there exists an ambient metric in $M$ such that $X$ is a stationary solution to the Euler equations for the metric $g$ and some volume form $\mu$.
\end{defi}
The property of being Eulerisable is equivalent to the existence of a one-form $\alpha$ such that $\alpha(X)>0$ and $\iota_Xd\alpha$ is exact.

\begin{corollary}
On any closed three-manifold $M$, the set of non-vanishing Eulerisable flows preserving $\mu$ is not $C^\infty$-open in $\mathfrak{X}_\mu(M)$ or in $\mathfrak{X}_\mu^\eta(M)$.
\end{corollary}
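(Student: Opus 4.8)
The plan is to follow the scheme of the remark after Theorem~\ref{thm:mainsimp}, which shows that stable-like flows are not $C^\infty$-open in $\mathfrak{X}_\mu^\eta(M)$, and to upgrade it to the a priori larger class of Eulerisable flows. The only genuinely new input is a short observation that collapses the Eulerisable hypothesis onto the Reeb-like / global-cross-section dichotomy, which we have already obstructed robustly.

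First I would record that every stable-like flow is Eulerisable: if $(\lambda,\omega)$ is a stable Hamiltonian structure with Reeb field $R$ and $X=gR$ with $g>0$, then $\lambda(X)=g>0$ and $\iota_X d\lambda=g\,\iota_R d\lambda=0$ (since $\iota_R\omega=0$ and $\ker\omega\subset\ker d\lambda$), so the exact form $\iota_X d\lambda$ witnesses that $X$ is Eulerisable. Then I would take the stable-like vector field $X$ constructed in the proof of Theorem~\ref{thm:Reeb}: it lies in $\mathcal{SR}(M)$ (hence is Eulerisable), it represents any prescribed homotopy class, and it carries a positive and a negative $\partial$-strong partial section with non-degenerate binding orbits. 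Fixing a volume form $\mu$ preserved by $X$ and $\eta=[\iota_X\mu]\in H^2(M;\mathbb{R})$, we have $X\in\mathfrak{X}_\mu^\eta(M)\subseteq\mathfrak{X}_\mu(M)$. By Corollary~\ref{cor:robobs} (for the Reeb-like obstruction) and by Propositions~\ref{prop:robpartial} and~\ref{prop:obsGCS} (for the global-cross-section obstruction), there is a $C^1$-neighbourhood $\mathcal{W}$ of $X$ in $\mathfrak{X}(M)$ in which no vector field is Reeb-like or admits a global cross-section; and by Corollary~\ref{cor:HamNoInt} (via Theorem~\ref{thm:residualint} and Lemma~\ref{lem:intequiv}) the flows with no non-constant smooth first integral form a residual subset of $\mathfrak{X}_\mu^\eta(M)$. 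Intersecting this residual set with the non-empty open set $\mathcal{W}\cap\mathfrak{X}_\mu^\eta(M)$ yields vector fields $Y$, arbitrarily $C^\infty$-close to $X$ inside $\mathfrak{X}_\mu^\eta(M)$, that are not Reeb-like, admit no global cross-section, and possess no non-constant smooth first integral.

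The heart of the argument is then to verify that such a $Y$ is \emph{not} Eulerisable. Suppose some one-form $\alpha$ satisfies $\alpha(Y)>0$ and $\iota_Y d\alpha=dh$ with $h\in C^\infty(M)$. Then $dh(Y)=d\alpha(Y,Y)=0$, so the Bernoulli function $h$ is a first integral of $Y$ and therefore constant, giving $\iota_Y d\alpha=0$. Writing $\beta=\iota_Y\mu$ (closed and non-degenerate), both $d\alpha$ and $\beta$ annihilate $Y$; since in dimension three the two-forms killing a fixed non-zero vector form a one-dimensional space, $d\alpha=c\,\beta$ for some $c\in C^\infty(M)$. Then $0=d(d\alpha)=dc\wedge\beta$, and because $\beta$ is decomposable with $\ker\beta=\langle Y\rangle$ this forces $dc(Y)=0$, so $c$ is a first integral of $Y$, hence constant. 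If $c\neq0$, then $d\alpha$ is nowhere zero and $\iota_Y(\alpha\wedge d\alpha)=\alpha(Y)\,d\alpha\neq0$, so $\alpha$ is a contact form with $\ker d\alpha=\langle Y\rangle$; since $\alpha(Y)>0$, $Y$ is Reeb-like, contradicting the definition of $\mathcal{W}$. If $c=0$, then $\alpha$ is a closed non-vanishing one-form with $\alpha(Y)>0$, so $Y$ admits a global cross-section by Tischler's argument (as recalled in Section~\ref{ss:inteT2}; equivalently, by Theorem~\ref{thm:Sch}, since $\alpha$ pairs positively with every non-zero foliation cycle and annihilates every exact current), again contradicting the definition of $\mathcal{W}$. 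Hence $Y$ is not Eulerisable.

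Consequently $X$ is an Eulerisable, $\mu$-preserving, non-vanishing flow that is a $C^\infty$-limit of non-Eulerisable vector fields in $\mathfrak{X}_\mu^\eta(M)$; thus the set of non-vanishing Eulerisable flows preserving $\mu$ is not $C^\infty$-open in $\mathfrak{X}_\mu^\eta(M)$, and since the latter carries the subspace topology from $\mathfrak{X}_\mu(M)$, it is not $C^\infty$-open in $\mathfrak{X}_\mu(M)$ either. The step I expect to require the most care is precisely this last reduction --- identifying the Bernoulli function and the proportionality factor $c$ as first integrals of $Y$ --- together with keeping track that the genericity input (Corollary~\ref{cor:HamNoInt}) and the robustness input (Corollary~\ref{cor:robobs}, Propositions~\ref{prop:robpartial} and~\ref{prop:obsGCS}) are combined over the correct (subspace) topology; everything else is a bookkeeping recombination of these results with the construction underlying Theorem~\ref{thm:Reeb}.
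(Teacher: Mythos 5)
Your proposal is correct and follows essentially the same route as the paper: the same underlying construction (robust positive and negative $\partial$-strong partial sections obstructing Reeb-likeness and global cross-sections), the same genericity of non-integrability in $\mathfrak{X}_\mu^\eta(M)$, and the same key observation that the Bernoulli function of a would-be Eulerisable structure is a first integral, hence constant. The only cosmetic difference is that you derive the final contradiction directly from the dichotomy $d\alpha=c\,\iota_Y\mu$ with $c$ a constant first integral (Reeb-like if $c\neq0$, global cross-section via Tischler if $c=0$), whereas the paper packages this as the non-stabilizability of the generic perturbation already furnished by Theorem \ref{thm:mainsimp}.
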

\begin{proof}
    On any three-manifold, it is easy to construct a stable Hamiltonian structure $(\lambda',\omega')$ with a non-constant proportionality factor $\frac{d\lambda'}{\omega'}$, but to be concrete we can directly use \cite[Theorem 3.37]{CV}. The Hamiltonian structure $\omega'$ is induced by the inclusion of $M$ into the symplectization of $(\lambda',\omega')$, i.e. the symplectic manifold $M\times (-\varepsilon,\varepsilon)$ equipped with the symplectic form $d(t\lambda')+\omega'$ where $t$ denotes the coordinate on $\mathbb{R}$. By Theorem \ref{thm:mainsimp}, we can find a hypersurface $\tilde M\cong M$ isotopic $M$ whose induced Hamiltonian structure $\tilde \omega$ admits a stabilizing one-form $\lambda$ with the following property. There is a neighborhood of $\tilde \omega$ in the set of cohomologous Hamiltonian structures such that every element in that neighborhood has a characteristic foliation that is not Reeb-like and does not admit a global cross-section. Let $Y$ be the vector field defined by $\iota_Y\mu=\tilde \omega$, and notice that $Y$ is Eulerisable. Indeed, as $\tilde \omega$ is stabilizable it admits a stabilizing one-form, which satisfies $\lambda(Y)>0$ and $\iota_Yd\lambda=0$ (which is, in particular, an exact one-form). On the other hand, a generic perturbation $\tilde Y$ of $Y$ in $\mathfrak{X}_\mu^\eta(M)$ is a vector such that $\iota_Y\mu$ is not stabilizable and which admits no smooth first integral. Let us show that $\tilde Y$ is not Eulerisable. If it is not the case, then there exists a one-form $\tilde \lambda$ such that $\tilde \lambda(Y)>0$ and $\iota_Y \tilde \lambda=-dB$ for some function $B\in C^\infty(M)$. This function is a first integral of $Y$, since integrating the previous equality we have $\iota_YdB=0$. It must then be constant and $\tilde \lambda$ is a stabilizing one-form of $\iota_Y\mu$, which is a contradiction.
\end{proof}

\subsection{Adding integrability to stable hypersurfaces}\label{ss:addint}

In the previous section, we proved the main theorem for a stable hypersurface that admits a defining stable Hamiltonian structure $(\lambda,\omega)$ such that $d\lambda$ is not a constant multiple of $\omega$. In this section, we show that if this is not the case, we can find a stable hypersurface isotopic and $C^1$-close to a given one that satisfies it.

\begin{theorem}\label{thm:fnonconst}
Let $M\subset (W,\Omega)$ be a stable hypersurface. Then there exists a stable isotopic $C^1$-close hypersurface $\tilde M$ with a defining stable Hamiltonian structure $(\lambda,\omega)$ such that $d\lambda=f\omega$ with $f$ non-constant.
\end{theorem}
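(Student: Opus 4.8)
If $M$ already carries an induced stable Hamiltonian structure $(\lambda,\omega)$ with $f:=d\lambda/\omega$ non-constant, take $\tilde M=M$; so assume every induced pair $(\lambda,\omega)$ has $f$ equal to a constant. Fixing one such pair, either $f\neq 0$ and $\omega$ is of contact type, or $f\equiv 0$, $\lambda$ is closed, and the characteristic foliation admits a global cross-section. In either case the characteristic foliation of $M$ carries a closed orbit $\gamma$, by the three-dimensional Weinstein conjecture \cite{HT} in the contact case and by \cite{HT, Re, CR} in general; the only exceptions are certain $T^2$-bundles supporting a minimal suspension, which are handled directly (there the characteristic foliation is linear on $T^2\times(0,1)\subset M$ and a $C^1$-small perturbation turns it into a rational linear flow, producing an integrable region at once). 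The plan is then: (i) $C^1$-perturb $M$ through stable hypersurfaces to a stable $M'$ whose characteristic foliation has a $T^2$-invariant integrable region of non-constant slope; (ii) on $M'$, modify the stabilizing one-form inside that region to obtain a defining stable Hamiltonian structure with non-constant proportionality factor. Step (ii) is soft; step (i) is the heart of the matter and is the content of Proposition~\ref{prop:addint}.

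\textbf{Creating the integrable region (main obstacle).} Around $\gamma$ one uses a symplectic normal form: a neighbourhood of $\gamma$ in $W$ is symplectomorphic to $(\mathbb{R}/\mathbb{Z})_\theta\times\mathbb{R}_\tau\times D^2_{(p,q)}$ with $\Omega=d\tau\wedge d\theta+dp\wedge dq$ and $M=\{\tau=h(\theta,p,q)\}$, where $h$ vanishes to second order along $\gamma=\{\tau=0,\ p=q=0\}$; the characteristic foliation is, up to reparametrization, the suspension flow of the time-dependent Hamiltonian $h(\theta,\cdot)$ on $D^2$. We want to replace $h$, on a small solid torus around $\gamma$, by an autonomous and rotationally invariant $\tilde h=H(p^2+q^2)$, so that the induced Hamiltonian structure there becomes $H'(\rho)\,d\rho\wedge d\theta+dp\wedge dq$, a $T^2$-invariant Hamiltonian structure with non-constant slope as soon as $H''\not\equiv 0$. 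The essential difficulty is that this replacement must be done by a perturbation of $h$ (equivalently of $M$) that is small in $C^1$: a naive cutoff interpolation between $H(\rho)$ and $h$ fails, because the transition annulus contributes an error of size $\sim\|\text{non-radial part of }h\|\cdot(\text{radius})/(\text{width})$, which need not be small. This is overcome by first bringing $h$ to a partial normal form near $\gamma$ — a $\theta$-dependent symplectic coordinate change of Floquet/Birkhoff type, whose existence requires first arranging, or choosing, $\gamma$ to be of a suitable type — in which the non-autonomous and non-radial part of $h$ vanishes to high order along $\gamma$; the interpolation error then becomes higher-order in the size of the solid torus and tends to $0$. Stability is preserved along the way: away from the modification, $\lambda$ still stabilizes $\omega'$; on the modified region $\omega'$ is $T^2$-invariant with non-constant slope, so Proposition~\ref{prop:CV1} extends the stabilizing form $d\theta$ (valid near $\gamma$, where the Reeb field is $\approx\partial_\theta$) across it, after homotoping $\lambda$ near the region's boundary to a $T^2$-invariant form via \cite[Lemma 3.9]{CV}; the isotopy is made stable by the parametric statement Proposition~\ref{prop:param}.

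\textbf{From the integrable region to a non-constant proportionality factor.} Now $M'$ is stable and carries a $T^2$-invariant integrable region $U\cong T^2\times I$ with $\omega'=h_1'(r)\,dr\wedge dx+h_2'(r)\,dr\wedge dy$ of non-constant slope, and the stabilizing form $\lambda'$ produced above is $T^2$-invariant on $U$, say $\lambda'=g_1^0(r)\,dx+g_2^0(r)\,dy$ with $(g_1^0{}',g_2^0{}')=f_0(r)(h_1',h_2')$, where $f_0$ is constant near each component of $\partial U$ (by the standing assumption on $M$). Choose $f$ agreeing with $f_0$ near $\partial U$ but non-constant in the interior, with $\|f-f_0\|$ small, and set $\tilde\lambda=\lambda'$ outside $U$ and $\tilde\lambda=g_1(r)\,dx+g_2(r)\,dy$ inside, with $(g_1',g_2')=f(h_1',h_2')$ and $g_i=g_i^0$ near $\partial U$; taking the modification small keeps $\tilde\lambda\wedge\omega'>0$ (an open condition), and $(g_1',g_2')\parallel(h_1',h_2')$ keeps $\ker\omega'\subset\ker d\tilde\lambda$. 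Then $(\tilde\lambda,\omega')$ is a stable Hamiltonian structure induced on $M'$ with $d\tilde\lambda/\omega'=f$ non-constant. This final step does not move the hypersurface, only the stabilizing form, so it is a trivial stable homotopy that composes with the previous ones by the gluing of stable homotopies recorded in Section~\ref{ss:SHS}. Setting $\tilde M=M'$ completes the proof, and feeding $\tilde M$ into Theorem~\ref{thm:mainsimp} removes the simplifying assumption in Theorem~\ref{thm:main}.
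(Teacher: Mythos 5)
Your overall skeleton (find a closed orbit of the characteristic foliation, create an integrable region around it by a $C^1$-small isotopy, then modify the stabilizing one-form inside that region) coincides with the paper's route via Theorem \ref{thm:addintS} and Lemma \ref{prop:intSHS}, but the central step --- the analogue of Proposition \ref{prop:addint} --- is not actually proved in your write-up. You propose to make $h$ rotationally invariant near $\gamma$ and hang the $C^1$-smallness on ``first bringing $h$ to a partial normal form \dots whose existence requires first arranging, or choosing, $\gamma$ to be of a suitable type.'' This is exactly where the argument can break: the $2$-jet of $h$ along $\gamma$ can be made radial by a $\theta$-dependent symplectic change of coordinates only if the linearized return map is conjugate to a rotation, i.e.\ only if $\gamma$ is elliptic (and a Birkhoff normal form killing the non-radial part ``to high order'' further requires non-resonance of the rotation number). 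There are stable hypersurfaces, e.g.\ contact hypersurfaces with Anosov Reeb flow, all of whose closed characteristics are hyperbolic, so you cannot ``choose'' a suitable $\gamma$; and ``arranging'' one means changing the linearized return map by a perturbation of the hypersurface that you must show is $C^1$-small --- which is precisely the nontrivial content you have omitted. The paper's Proposition \ref{prop:addint} handles this by working with the return map itself: first replace $F$ by its linearization $DF|_0$ (legitimate because $DF|_0\circ F^{-1}$ is $C^1$-close to the identity on a small disk, so a generating-function construction yields a $C^1$-small Hamiltonian perturbation), and then kill the linear part along a path in $SL(2,\mathbb{R})$ whose generating Hamiltonians are quadratic, so that after a cutoff at radius $\delta$ their $C^1$-norms are $O(\delta)$; this estimate works for hyperbolic orbits just as well as elliptic ones and is the missing ingredient in your proposal.

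Two further remarks. First, your stated reason for discarding the naive cutoff is based on the wrong estimate: since $h$ and $dh$ vanish along $\gamma$, the non-radial part of $h$ is $O(\rho^2)$ with gradient $O(\rho)$, so interpolating to the rotational average over an annulus whose width is comparable to its radius $\delta$ costs only $O(\delta)$ in $C^1$ (it is the $C^2$-norm that is not small, which is why the statement is about $C^1$-closeness); so the direct averaging route could in fact be salvaged, but as written your proof carries out neither that estimate nor the paper's. Second, some smaller gaps: in the last step you must also impose $\int_I (f-f_0)h_1'\,dr=\int_I (f-f_0)h_2'\,dr=0$, otherwise the new $g_i$ do not match $g_i^0$ at the far boundary of $U$ (this is what the paper's choice of $\rho$ with $\int\rho h_i'\,dr=0$ achieves); your stability argument invokes Proposition \ref{prop:CV1}/Proposition \ref{prop:param} on the transition annulus, where the induced Hamiltonian structure is not $T^2$-invariant, so their hypotheses fail --- the paper instead gets stability of the isotopy for free from the standing assumption $d\lambda=c\omega$, since for $c\neq 0$ the hypersurface is of contact type and for $c=0$ it carries a global cross-section, both $C^1$-robust properties; and for the existence of the closed orbit the paper simply applies the Pugh--Robinson closing lemma \cite{PR}, which sidesteps your only sketched treatment of the exceptional $T^2$-bundle cases of \cite{HT}.
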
 

It is clear that Theorem \ref{thm:fnonconst} and Theorem \ref{thm:mainsimp} imply our main result Theorem \ref{thm:main}.
We will prove the statement above by a concatenation of reductions. The following lemma establishes that if $M$ is a stable hypersurface whose characteristic foliation admits an integrable region, then we can find a stabilizing one-form such that $\frac{d\lambda}{\omega}$ is non-constant. Thus, we reduce the problem to finding a stable isotopic hypersurface with an integrable region, which will do next.

\begin{lemma}\label{prop:intSHS}
Let $(\lambda,\omega)$ be a stable Hamiltonian structure whose Reeb field admits an integrable region. Then there exists a stabilizing one-form $\tilde \lambda$ of $\omega$ such that $d\tilde \lambda=f\omega$ with $f$ a non-constant function.
\end{lemma}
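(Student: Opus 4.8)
The plan is to keep $\omega$ fixed and obtain $\tilde\lambda$ by a perturbation of $\lambda$ supported in the integrable region. Recall from Section~\ref{ss:inteT2} that the proportionality factor $f:=d\lambda/\omega$ is itself a first integral of the Reeb field $X$. If $f$ is non-constant there is nothing to prove, so I would assume $f\equiv c$ is constant. The key point is that once we put $(\lambda,\omega)$ in $T^2$-invariant normal form, the stability condition $\ker\omega\subset\ker d\tilde\lambda$ amounts simply to $d\tilde\lambda$ being a function multiple of $\omega$ (since $X$ spans $\ker\omega$), so it suffices to modify $\lambda$ inside the region in a way that changes this function into a non-constant one while keeping $\tilde\lambda\wedge\omega>0$ and keeping $\tilde\lambda$ globally defined.

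First I would pass to a normal form. By hypothesis the Reeb field $X$ has an integrable region $U\cong T^2\times I$; since $\ker\omega=\langle X\rangle$ is tangent to the torus fibers and $f\equiv c$ is in particular constant on each fiber, the discussion of Section~\ref{ss:inteT2} (based on \cite[Theorem 3.3, Lemma 3.9]{CV}) provides coordinates $(x,y,r)$ on a possibly smaller region $U'\cong T^2\times I$ in which
$$\omega= h_1'(r)\,dr\wedge dx + h_2'(r)\,dr\wedge dy,$$
and, after replacing $\lambda$ by a homotopic stabilizing one-form relative to $\partial U'$, $\lambda=g_1(r)\,dx+g_2(r)\,dy$. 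Because $\omega$ is non-degenerate, $(h_1'(r),h_2'(r))\neq 0$ for every $r$. Writing out $\iota_X d\lambda=0$ with $X$ proportional to $-h_2'\partial_x+h_1'\partial_y$ forces $(g_1',g_2')$ to be pointwise proportional to $(h_1',h_2')$, i.e. $d\lambda=\psi(r)\,\omega$ for a function $\psi$ which, by the "relative to $\partial U'$" clause, equals the constant $c$ near the endpoints of $I$. If $\psi$ is non-constant we are already done, so assume $\psi\equiv c$.

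Now I would perform the perturbation exactly in the spirit of the proof of Proposition~\ref{prop:SHSht}. The linear functional $\phi\mapsto\big(\int_I\phi(r)h_1'(r)\,dr,\ \int_I\phi(r)h_2'(r)\,dr\big)$ on $C_c^\infty(\mathrm{int}\,I)$ has infinite-dimensional kernel, so choose a non-zero $\phi$ in this kernel and rescale it to be as $C^\infty$-small as desired. Set $\tilde g_i(r)=\int_0^r\phi(s)h_i'(s)\,ds$; these vanish near $0$ because $\phi$ does and near $1$ because $\int_I\phi h_i'=0$, so
$$\tilde\lambda:=\lambda+\tilde g_1\,dx+\tilde g_2\,dy \ \text{on }U',\qquad \tilde\lambda:=\lambda \ \text{elsewhere}$$
is a smooth one-form on $M$. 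Then on $U'$ one computes $d\tilde\lambda=d\lambda+\phi(r)\big(h_1'\,dr\wedge dx+h_2'\,dr\wedge dy\big)=(c+\phi(r))\,\omega$, while $d\tilde\lambda=c\,\omega$ off $U'$; hence $d\tilde\lambda=\tilde f\,\omega$ globally with $\tilde f$ equal to $c$ outside $U'$ and to $c+\phi(r)$ inside, which is non-constant since $\phi\not\equiv 0$. Finally $\tilde\lambda\wedge\omega>0$ everywhere (it equals $\lambda\wedge\omega>0$ outside $U'$, and is a $C^0$-small perturbation of it inside, for $\phi$ small), and $\iota_X d\tilde\lambda=\tilde f\,\iota_X\omega=0$, so $(\tilde\lambda,\omega)$ is a stable Hamiltonian structure with non-constant proportionality factor.

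Once the normal form from \cite{CV} is in place the argument is elementary, so I do not anticipate a serious obstacle; the only delicate point is that the perturbation must be supported away from $\partial U'$ so that $\tilde\lambda$ extends globally, and this is precisely guaranteed by the infinite-dimensionality of the kernel of the functional above.
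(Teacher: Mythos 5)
Your proposal is correct and follows essentially the same route as the paper: after putting $\omega$ in $T^2$-invariant normal form via \cite[Theorem 3.3]{CV}, one adds the compactly supported perturbation $\tilde g_1\,dx+\tilde g_2\,dy$ with $\tilde g_i'=\rho h_i'$ and zero total integrals, so that $d\tilde\lambda=(c+\rho)\omega$ while $\tilde\lambda\wedge\omega>0$ is preserved for $\rho$ small — exactly the argument borrowed from the proof of Proposition \ref{prop:SHSht}. Your additional step of normalizing $\lambda$ itself to a $T^2$-invariant form (and the case split on $\psi$) is harmless but not needed, since the perturbation argument only uses the normal form of $\omega$.
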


\begin{proof}
We will assume that $d\lambda=c\omega$ for some constant $c$, since otherwise we are done. The fact that the Reeb field admits an integrable region implies by \cite[Theorem 3.3]{CV} that in this region $U\cong T^2\times I$ there are coordinates $(x,y,r)$ such that 
$$\omega=h_1'(r) dr\wedge dx + h_2'(r)dr\wedge dy.$$
 We can now use part of an argument that we used in the proof of Theorem \ref{thm:SHSdelta}. There is a non-zero function $\rho:I\rightarrow \mathbb{R}$ vanishing for $r\in [0,\delta]\cup [1-\delta,1]$ and satisfying $\int_0^1 \rho(r)h_1'(r)dr=\int_0^1 \rho(r) h_2'(r)dr=0$. Let $\tilde g_1(r)$ and $\tilde g_2(r)$ be primitives of $\rho h_1'$ and $\rho h_2'$ and consider the one-form 
$$\tilde \lambda= \lambda + \tilde g_1 dx + \tilde g_2 dy, $$
which is a stabilizing one-form of $\omega$ if $\rho$ is small enough. We have $d\tilde \lambda= d\lambda + \rho \omega$, and hence $\frac{d\tilde\lambda}{\omega}$ is not constant.
\end{proof}

To be able to apply the previous lemma, we need to establish that one can introduce integrable regions in the characteristic foliation of a stable hypersurface by means of a small stable isotopy.

\begin{theorem}\label{thm:addintS}
Let $M\subset (W,\Omega)$ be a stable hypersurface with a stabilizing one-form $\lambda$ satisfying $d\lambda=c\omega$ for some constant $c\in \mathbb{R}$. Then there exists a stable hypersurface $\tilde M$, stable isotopic and arbitrarily $C^1$-close to $M$, whose characteristic foliation admits an integrable region.
\end{theorem}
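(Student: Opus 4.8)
The plan is to reduce everything to a single periodic orbit of the characteristic foliation and to insert the integrable region inside a thin solid‑torus neighbourhood of it, checking at each stage that stability and the cohomology class of $\omega$ survive. \emph{Step 1 (a periodic orbit).} If $c\neq 0$ then $\lambda/c$ is a contact form with $d(\lambda/c)=\omega$, so $M$ is of contact type and the Weinstein conjecture \cite{HT} provides a closed orbit $\gamma$ of the characteristic field $X=\ker\omega$. If $c=0$ then $\lambda$ is closed, and after a stable homotopy that fixes $M$ and only moves $\lambda$ (a linear path of closed stabilizing forms) I may assume $[\lambda]$ is a ray through a rational class, so that $M$ is the suspension of an area–preserving diffeomorphism of a surface $\Sigma$; if it, or an iterate, has a periodic point we get $\gamma$, and otherwise $\Sigma=T^2$ and (the $T^2$–bundle exceptions to the Weinstein conjecture) one may choose coordinates in which $M$ is the suspension of a near–identity area–preserving diffeomorphism and then perturb it, $C^\infty$–smally and with zero flux, to create a periodic point; the resulting hypersurface is still a suspension of an area–preserving map, hence still stable, and has the same cohomology class, so by Lemma \ref{lem:app} it is $C^\infty$–close and stable isotopic to $M$. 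In all cases I may now assume $X$ has a periodic orbit $\gamma$.

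\emph{Step 2 (normal form and the local perturbation).} The orbit $\gamma$ is an isotropic circle in $W$, so by Weinstein's neighbourhood theorem a neighbourhood of $\gamma$ is symplectomorphic to one of the zero section in $(T^*S^1\times\R^2,\ dp\wedge d\theta+dx\wedge dy)$ with $\gamma=\{p=x=y=0\}$; since $\partial_\theta$ spans the characteristic direction along $\gamma$ one has $T_\gamma M=(T\gamma)^{\Omega}=\{dp=0\}$, so near $\gamma$ the hypersurface is a graph $M=\{p=H(\theta,x,y)\}$ with $H(\theta,0,0)=0$ and $d_{x,y}H(\theta,0,0)=0$, hence $H=O(r^2)$ uniformly in $\theta$ ($r^2=x^2+y^2$); there $\omega=dH\wedge d\theta+dx\wedge dy$ and $X=\partial_\theta+H_y\partial_x-H_x\partial_y$. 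Fix $0<a<b\ll 1$ and a cutoff $\chi(r)$ equal to $1$ on a core interval $[a',b']\subset(a,b)$ and to $0$ near $r=a,b$, and set $\tilde H:=H-\chi(r)\,\big(H-\langle H\rangle_{\phi}\big)$, where $\langle H\rangle_\phi$ is the average of $H$ over the angle $\phi$ of the $(x,y)$–plane. Because $H=O(r^2)$, the oscillatory part $H-\langle H\rangle_\phi$ is $O(b^2)$ with angular derivative $O(b)$, so $\|\tilde H-H\|_{C^1}=O(b)$, and the hypersurface $\tilde M$ obtained by replacing $M$ with $\{p=\tilde H\}$ inside the tube is arbitrarily $C^1$–close to $M$.

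\emph{Step 3 (conclusion).} The Hamiltonian structure induced on $\tilde M$ is $\tilde\omega=\omega+d\big((\tilde H-H)\,d\theta\big)$, an exact perturbation of $\omega$ with $[\tilde\omega]=[\omega]$; since $\tilde M$ is written down explicitly (equal to $M$ off the tube, to the graph $\{p=\tilde H\}$ on it), it is $C^1$–close and isotopic to $M$ via $p=H+s(\tilde H-H)$, $s\in[0,1]$. On the core $\{a'\le r\le b'\}$ one has $\tilde H=\langle H\rangle_\phi=:K(\theta,r)$, angle–independent, so there $\tilde X=\partial_\theta+(K_r/r)(y\partial_x-x\partial_y)$ annihilates $r$: the core is an integrable region $\cong T^2_{\theta,\phi}\times[a',b']$ of the characteristic foliation of $\tilde M$. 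For stability: if $c=0$ the closed form $\lambda$ still stabilizes $\tilde\omega$ (the condition $\ker\tilde\omega\subset\ker d\lambda=TM$ is vacuous and $\lambda\wedge\tilde\omega>0$ by smallness); if $c\neq 0$ then $\tilde\omega=d\big(\lambda/c+\beta\big)$ with $\beta=(\tilde H-H)\,d\theta$ a $C^1$–small one–form, and $\lambda/c+\beta$ is still a contact form, hence stabilizes $\tilde\omega$. In either case the interpolation above is a stable isotopy, which (concatenated with the one from Step 1) proves Theorem \ref{thm:addintS}; an application of Lemma \ref{prop:intSHS} then upgrades the conclusion to a stabilizing form with non‑constant $d\lambda/\omega$, as needed for Theorem \ref{thm:fnonconst}.

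\emph{Main obstacle.} The delicate point is the tension inside Step 2: producing a genuine interval's worth of invariant tori is a non‑generic, integrability–forcing perturbation, and keeping it $C^1$–small forces one to work in a thin tube where $H=O(r^2)$; the price paid is that $\tilde\omega$ is only $C^0$–close to $\omega$ (the cutoff $\chi$ contributes $O(1)$ to $\|\tilde H-H\|_{C^2}$), which is exactly why the statement cannot be improved beyond $C^1$–closeness. The secondary difficulty is the no‑periodic‑orbit case of Step 1: one must manufacture a periodic orbit while simultaneously preserving the cohomology class of $\omega$ — forced by the isotopy — and preserving stabilizability, and it is precisely the hypothesis $d\lambda=c\omega$ together with the cohomological triviality of ball‑supported (zero‑flux) perturbations that makes this possible.
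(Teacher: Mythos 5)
Your Steps 2--3 are essentially correct and take a genuinely different route from the paper's local construction. The paper (Proposition \ref{prop:addint}) works in a Hamiltonian flow-box near one point of $\gamma$, extracts the symplectic first-return map on a transverse disk, and uses generating functions plus an explicit $SL(2,\mathbb{R})$-path Hamiltonian to make the return map first linear and then the identity near the orbit; you instead take a Weinstein tube around the whole isotropic circle $\gamma$, write $M$ as a graph $\{p=H\}$ with $H=O(r^2)$, and angularly average $H$ on an annular core, which makes the return dynamics rotationally invariant there. Your identification $T_\gamma M=(T\gamma)^{\Omega}=\{dp=0\}$ is correct (symplectomorphisms preserve symplectic orthogonals), the vector field computation and the $C^1$-estimate $\|\tilde H-H\|_{C^1}=O(b)$ (with cutoff transition widths comparable to $r$) are right, and your explicit stabilizing forms ($\lambda$ itself when $c=0$; $\lambda/c+s\beta$, or equivalently $\lambda+sc\beta$, when $c\neq 0$) give the stable isotopy directly, where the paper instead appeals to the $C^1$-robustness of the contact / global cross-section properties. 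Both approaches lose exactly one derivative through the cutoff, which is why only $C^1$-closeness is obtained.

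The genuine gap is in Step 1, in the case $c=0$ with no periodic orbit. Two assertions there do not hold as stated. First, ``one may choose coordinates in which $M$ is the suspension of a near-identity area-preserving diffeomorphism'' is false: a periodic-point-free area-preserving diffeomorphism of $T^2$ (e.g.\ a totally irrational translation, or a shear over an irrational rotation) is not conjugate to anything near the identity. Second, creating a periodic point by a $C^\infty$-small \emph{zero-flux} perturbation is precisely the smooth closing lemma for area-preserving torus maps; it is not elementary, and the flux constraint you correctly identify (forced by $[\tilde\omega]=[\omega]$ and Lemma \ref{lem:app}) is exactly what blocks a naive argument: a $C^\infty$-small Hamiltonian perturbation of a totally irrational translation keeps the mean rotation vector irrational, so producing a rational rotation vector, hence a periodic point, is a deep matter, not a routine perturbation. (Your side claim that the fiber must be $T^2$ in the periodic-point-free case also needs a citation, though it is true.) The fix is cheap and is what the paper does: since the theorem only claims $C^1$-closeness, apply the Pugh--Robinson Hamiltonian $C^1$-closing lemma \cite{PR} to a hypersurface with no periodic orbit; the perturbed hypersurface is still stable because, when $c=0$, the global cross-section property (and the given closed stabilizing form) survives $C^1$-small perturbations, and when $c\neq 0$ your Weinstein-conjecture argument already supplies the orbit. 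With Step 1 repaired in this way, your construction proves Theorem \ref{thm:addintS}.
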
 

To prove this theorem, the idea is to construct an integrable region near a periodic orbit (which we will show can be assumed to exist) of the characteristic foliation. This is the content of the next proposition.

\begin{prop}\label{prop:addint}
Let $M$ be a hypersurface in a symplectic manifold $(W,\Omega)$ of dimension four. Let $\gamma$ be a periodic orbit of the characteristic foliation of $M$. Then there exists an isotopic $C^1$-close hypersurface $\tilde M$ whose characteristic foliation admits $\gamma$ as a periodic orbit and has an integrable region. 
\end{prop}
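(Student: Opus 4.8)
The plan is to work in a tubular neighborhood of $\gamma$ in $W$, where a concrete symplectic model is available, and to realize the integrable region by a $C^1$-small graphical perturbation of $M$ supported in a thin solid torus around $\gamma$. Since $\gamma$ is an embedded circle it is isotropic, and its symplectic normal bundle is a rank-two symplectic vector bundle over $S^1$, hence trivial because $\mathrm{Sp}(2,\R)$ is connected. By Weinstein's isotropic neighborhood theorem, a neighborhood of $\gamma$ in $(W,\Omega)$ is symplectomorphic to a neighborhood of $S^1_\theta\times\{0\}$ in $(S^1_\theta\times\R^3_{x,y,t},\, dx\wedge dy + dt\wedge d\theta)$, with $\gamma$ identified with $S^1\times\{0\}$. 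As $\gamma$ is a closed leaf of the characteristic foliation of $M$, along $\gamma$ we have $T_pM=(T_p\gamma)^{\perp_\Omega}=\ker dt$, so after shrinking the neighborhood $M$ is the graph $\{t=u(\theta,x,y)\}$ of a function with $u|_\gamma=0$ and $du|_\gamma=0$; in particular $u(\theta,z)=O(|z|^2)$ with $z=(x,y)$.

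Next I would record the characteristic foliation of a graphical hypersurface: for a small function $v(\theta,z)$, the induced Hamiltonian structure on $\{t=v\}$ is $\omega_v=dx\wedge dy+dv\wedge d\theta$, and a short computation shows its kernel is spanned by $X_v=\partial_\theta+(\partial_y v)\,\partial_x-(\partial_x v)\,\partial_y$, i.e. $X_v$ is the suspension of the $\theta$-periodic family of Hamiltonian vector fields of $v(\theta,\cdot)$ on $(\R^2,dx\wedge dy)$. Consequently, if $v$ is radial and independent of $\theta$ on an annulus $\{\rho_1\le|z|\le\rho_2\}$, say $v=\phi(|z|)$ with $\phi'\neq 0$ there, then in polar coordinates $X_v=\partial_\theta-\frac{\phi'(r)}{r}\,\partial_\varphi$ is tangent to the tori $\{|z|=c\}$, so $\{\rho_1\le|z|\le\rho_2\}\times S^1_\theta\cong T^2\times I$ is an integrable region of the characteristic foliation of $\{t=v\}$.

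It then remains to build $\tilde u$ that is $C^1$-close to $u$, agrees with $u$ near $\gamma$ and outside a thin solid torus, and is radial on an annulus. Fix $r_1>0$ small enough that $N:=\{|z|\le r_1\}$ lies in the Weinstein chart; since $u=O(|z|^2)$ we have $\|u\|_{C^1(N)}\le C_u r_1$ with $C_u$ depending only on $M$. Choose radii $\delta_0<\delta_1<\delta_2<\delta_3$ all proportional to $r_1$ and a radial profile $\phi$ supported in $[\delta_0,\delta_3]$ with $\phi'\equiv\varepsilon$ on $[\delta_1,\delta_2]$, $|\phi'|\le\varepsilon$ and $|\phi|\le\varepsilon r_1$, for a small $\varepsilon>0$. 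Set $\tilde u:=u$ for $|z|\le\delta_0$ or $|z|\ge\delta_3$, $\tilde u:=\phi(|z|)$ for $\delta_1\le|z|\le\delta_2$, and $\tilde u:=u+\kappa(|z|)(\phi(|z|)-u)$ on the two transition annuli, with a cutoff $\kappa$, $0\le\kappa\le1$ and $|\kappa'|\le C r_1^{-1}$. Counting derivatives yields $\|\tilde u-u\|_{C^1(N)}\le C'(\varepsilon+r_1)$ with $C'$ independent of $\varepsilon$ and $r_1$. Let $\tilde M$ be $M$ outside $N$ and the graph $\{t=\tilde u\}$ inside $N$: it is a smooth embedded hypersurface, $C^1$-close to $M$ once $\varepsilon,r_1$ are small, and isotopic to $M$ through the graphs $\{t=u+s(\tilde u-u)\}$, $s\in[0,1]$. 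Since $\tilde M$ coincides with $M$ near $\gamma$, the orbit $\gamma$ is still a closed leaf of its characteristic foliation, and by the previous paragraph that foliation has the integrable region $\{\delta_1\le|z|\le\delta_2\}\times S^1_\theta$.

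The only genuinely delicate point is reconciling the smallness of the perturbation with the drastic change of the local first-return dynamics; this is possible precisely because near a characteristic periodic orbit the hypersurface is automatically $C^1$-close to the flat model $\{t=0\}$ (as $u=O(|z|^2)$), so a $C^1$-small modification supported near $\gamma$ can overwrite the local dynamics by an integrable twist. (In the application within Theorem \ref{thm:addintS} one additionally checks, when $M$ is stable, that this isotopy can be taken to be stable, using the $T^2$-invariant extension results for stabilizing one-forms of Section \ref{ss:inteT2}.)
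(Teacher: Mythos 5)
Your proof is correct, but it follows a genuinely different route from the paper's. You work in a Weinstein isotropic neighborhood of the whole orbit $\gamma$, observe that $T_pM=(T_p\gamma)^{\perp_\Omega}$ along $\gamma$ forces $M$ to be a graph $\{t=u\}$ with $u=O(|z|^2)$, and then overwrite the graph on a thin annular solid torus by a rotationally symmetric profile; the induced characteristic field there is $\partial_\theta-\tfrac{\phi'(r)}{r}\partial_\varphi$, so the invariant tori are explicit, and the $C^1$-smallness is exactly the quadratic vanishing of $u$ along $\gamma$ (the mechanism you correctly flag as the delicate point). The paper instead works in a Hamiltonian flow-box around a single point of $\gamma$ and performs surgery on the first-return map of the characteristic flow: a generating-function construction (with Hamilton--Jacobi control of the $C^1$-norm) first replaces the return map by its linearization $DF|_0\in SL(2,\R)$, and then, using connectedness of $SL(2,\R)$ and a cut-off linear Hamiltonian, makes the return map the identity near the fixed point, so concentric circles sweep out invariant tori. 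Your argument is more elementary -- no return maps or generating functions -- and produces an explicit integrable annulus with prescribed twist; the paper's argument stays entirely within the framework of perturbing the defining Hamiltonian $H$ (the deformed hypersurfaces are level sets of $C^1$-close Hamiltonians) and yields integrability arbitrarily close to $\gamma$. Both deliver what Theorem \ref{thm:addintS} needs; note only that the stability of the isotopy is handled there via the $C^1$-robustness of the contact-type or global-cross-section property (rather than the $T^2$-invariant extension results you mention in passing), and that argument applies equally well to your graphical isotopy since it is $C^1$-small.
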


\begin{proof}
Let $\iota:M\rightarrow (W,\Omega)$ be the hypersurface, and $\omega=\iota^*\Omega$ the induced Hamiltonian structure. Let $X$ be a section of $\ker \omega$ given by fixing a Hamiltonian function $H\in C^\infty(W)$ such that $M$ is a regular connected component of $H^{-1}(0)$. In other words $X$ is the Hamiltonian vector field of $H$ along $M$. The closed curve $\gamma$ is a periodic orbit of $X$. Construct Hamiltonian flow-box coordinates (see e.g. \cite[Theorem 6.3.2]{MO}) near some point $p \in \gamma$, i.e. a neighborhood $U\cong D\times [0,1]\times[-1,1]$ with coordinates $(x,y,z,t)$ such that the Hamiltonian is $H=t$, the symplectic form is $\Omega= dx\wedge dy + dz\wedge dt$ and $p=(0,0,0,0)$. In these coordinates, the Hamiltonian flow of $H$ is $X_H=\pp{}{z}$, and by construction $X_H|_M=X$.

In a small enough disk $D_0\subset D$ of radius $\delta$, we can find a function $\tau:D_0 \times \{1\}\times \{0\} \rightarrow \mathbb{R}$ such that $\phi_X^{\tau(q)}(q)\in D\times \{0\}\times \{0\}$ for each $q\in D_0 \times \{1\}\times \{0\}$, where $\phi_X^t$ denotes the flow of $X_H$. If we define $F(q)=\phi_X^{\tau(q)}(q)$, its projection to the $D$ factor can be interpreted as the first-return map $F:D_0\longrightarrow F(D_0)$ of $X_H$ along $D_0$. The map $F$ is symplectomorphism of $D_0$ into its image, where we equip $\mathbb{R}^2$ with the standard symplectic form $\omega=dx\wedge dy$, and has the origin as a fixed point (as the orbit of $X$ through $p$ is periodic). The map $F$ is $C^1$-close to its linearization $DF|_0:\mathbb{R}^2\rightarrow \mathbb{R}^2$ if the radius of $D_0$ is small enough, and similary with $F^{-1}$ and its linearization. Hence $g=DF|_0 \circ F^{-1}$ is arbitrarily $C^1$-close to the identity if we restrict to a sufficiently small neighborhood of the origin.\\

 We want first to perturb $M$ so that the new characteristic foliation admits a first-return map along $\gamma$ that is equal to $g\circ F$ in a small enough disk centered around $p$. A classical construction using generating functions, see e.g. \cite[Lemma 2.10]{AM}, shows that given any $\varepsilon>0$ there is a compactly supported symplectomorphism $g_c$ of $D_0$ into its image such that $\norm{g_c}_{C^1}<\varepsilon$ and $g_c=g$ in a small enough disk centered at the origin. 

An isotopy generating $g_c$ is given by the family of compactly supported symplectomorphisms $g_s$ obtained via the generating functions
$$ W_s(x',y)=x'y+\phi(s)V(x',y), $$
where $V(x',y)$ is such that $x'y+V(x',y)$ is a generating function of $g_c$ and $\phi(s)$ is a smooth cut-off function equal to $0$ near $s=0$ and equal to $1$ near $s=1$. Notice that $V$ is $C^2$-small, and thus $W(s,x',y):=W_s(x',y)$ satisfies that $\pp{W}{s}$ is $C^1$-small. The isotopy $g_s$ is generated by a time-dependent vector field $X_s$ such that $\iota_{X_s}dx\wedge dy=-dK_s$, where $K_s$ is a compactly supported function equal to $0$ near $s=0$ and $s=1$. The Hamilton-Jacobi equation (see e.g. \cite{BP}) and the $C^1$-smallness of $\pp{W}{s}$ imply that the function $K(x,y,z):=K_z(x,y)$ is $C^1$-small, i.e. its $C^1$-norm is bounded by $\varepsilon$ up to a constant.

Consider the function 
$$ \hat H= t+ K(x,y,z),$$ 
which is arbitrarily $C^1$-close to $H=t$ in $U$. The level set $\hat H=0$ extends as $H=0$ away from $U$, since $K$ has compact support in $D_0 \times [0,1]$. Denote by $\hat M$ this hypersurface, which is isotopic and $C^1$-close to $M$. The Hamiltonian vector field of $\hat H$ is 
 $$X_{\hat H}= \pp{}{z} - \pp{K_z}{z}\pp{}{t} + X_z,$$
 where $X_z$ and $K_z$ are the parametric families introduced before, parametrized by $z$. Using the trivial parametrization of $\{ \hat H=0\}$ by $(x,y,z)\mapsto (x,y,z,-K(x,y,z))$, which provides a diffeomorphism of $\tilde M$ with $M$, the vector field $X_{\hat H}$ along $\hat H=0$ is given by $\pp{}{z} + X_z$. Under this identification, there is a periodic orbit $\tilde \gamma\subset \tilde M$ of the characteristic foliation of $\tilde M$ that is identified with $\gamma\subset M$. The Poincar\'e return map of $\tilde X=X_{\hat H}|_{\tilde M}$ at $D_0$ is now given by $g_c\circ F$, which close to the periodic orbit is just given by an element in $SL(2,\mathbb{R})$.  \\
 
Notice that if the matrix is elliptic, then the characteristic foliation along $\hat M$ admits an integrable region: every concentric circle of small enough radius around $p$ is invariant by the first return map. In any case, we will show that one can always achieve that the first return map is the identity in a small neighborhood of the periodic orbit $\gamma$.
 
We will reuse the same notations as before in this new perturbation. We choose flow-box coordinates $U\cong D^2\times [0,1]\times [-1,1]$ of some point $p$ on the closed curve $\tilde \gamma \subset \tilde M$. Fix a small enough disk $D_0\times \{1\}\times \{0\}$ such that the flow of $\tilde X$ applied to this disk hits $D\times \{0\}\times \{0\}$ at some disk $D_1\times \{0\}\times\{0\}$. If $D_0$ is small enough, the disk $D_1$ is just the image of $D_0$ by a matrix $A\in SL(2,\mathbb{R})$. Since the symplectic linear group is connected, there is a path of matrices $A_s\in SL(2,\mathbb{R})$ such that $A_0=\operatorname{Id}$ and $A_1=A^{-1}$. The Hamiltonian
\begin{equation}
h_s (x,y)= -\frac{1}{2}\langle \dot A_s {A_s}^{-1} \cdot (y,-x), (x,y) \rangle
\end{equation}  
generates $A^{-1}$, i.e. the Hamiltonian vector field defined by $\iota_{X_s}dx\wedge dy=-d h_s$ generates an isotopy $\phi_t$ of the disk such that $\phi_t=A_t$ and hence $\phi_1=A^{-1}$. We multiply $h_s$ by a cut-off function $\psi(r)$ equal to $0$ near $r=\delta$ and equal to $1$ near $r=0$, where $r$ denotes a radial coordinate in $D$. For any small enough $\nu$, we can assume that $\norm{\psi}_{C^1}\leq C_\psi \frac{1}{\delta-\nu}$ for some constant $C_{\psi}$. Then the Hamiltonian $\tilde h_s=\psi h_s$ is compactly supported, and its norm for any fixed value of the parameter is
\begin{align*}
\norm{\tilde h_s}_{C^1} &\leq C_1 (\norm{\psi}_{C^1}\norm{h_s}_{C^0}+ \norm{\psi}_{C^0}\norm{h_s}_{C^1})\\
						&< C_2\left(\frac{r^2}{\delta-\nu}+r\right),
\end{align*}
for some constants $C_1,C_2$. For any $\delta$, we can choose $\nu$ so that there exists a universal constant $C_3$ with
$$\norm{\tilde h_s}_{C^1}\leq C_3 r.$$
Hence, given any $\varepsilon$ we can choose $\tilde h_s$ with $\norm{\tilde h_s}_{C^1}\leq \varepsilon$. Similarly, the time derivative $\dot{h}_s$ is also arbitrarily small: its norm is bounded by $C_4r^2$, for some constant $C_4$, so we can choose an arbitrarily small $\delta$ and make this norm arbitrarily small. Proceeding as before, we consider the Hamiltonian function
$$ \hat H_2= t+ \tilde h(x,y,t),$$
where $\tilde h(x,y,t):=\tilde h_t(x,y)$. This function is arbitrarily $C^1$-close to $H$, and the characteristic foliation through $\hat H_2=0$ admits an integrable region near the periodic orbit containing $p$: indeed, by construction, the first return map near $p$ is now equal to the identity.
\end{proof}
We can now prove Theorem \ref{thm:addintS} and finally Theorem \ref{thm:fnonconst}.
\begin{proof}[Proof of Theorem \ref{thm:addintS}]
Let $(\lambda,\omega)$ be a defining stable Hamiltonian structure of $M\subset (W, \Omega)$, and assume that $\frac{d\lambda}{\omega}=c$ is constant, since otherwise we are done. We treat separately the cases $c=0$ and $c\neq 0$. If $c=0$, then the Reeb field of $(\lambda,\omega)$ admits a global cross-section. This property is stable under $C^0$-perturbations of the characteristic foliation, hence it is stable under $C^1$-perturbations of the hypersurface. If $c\neq 0$, then $M$ is a contact type hypersurface. This property is also stable under $C^1$-perturbations, this is easily inferred from the fact that the contact type condition is given by the existence of a Liouville vector field $Y \in \mathfrak{X}(W)$ transverse to the hypersurface. Hence, if we show that there exists an arbitrarily small $C^1$-perturbation of $M$ into a hypersurface whose characteristic foliation admits an integrable region with non-constant slope, then the resulting hypersurface and isotopy $M_t$ are stable. Indeed, in the case of a global cross-section, the starting stabilizing one-form stabilizes as well any Hamiltonian structure near $\omega$, and if $M$ is of contact type, the isotopy $M_t$ can be stabilized by a family of contact primitives of the Hamiltonian structures induced in $M_t$.

If our starting stable hypersurface has no periodic orbit at all, we can apply the Hamiltonian $C^1$-closing Lemma \cite{PR} to slightly $C^2$-perturb $M$ to another (stable) hypersurface $\tilde M$ whose characteristic foliation admits at least one periodic orbit. This hypersurface will be stable isotopic to $M$, as we argued before. Applying Proposition \ref{prop:addint} to one of the periodic orbits then implies Theorem \ref{thm:addintS}.
\end{proof}

We proceed with the proof of Theorem \ref{thm:fnonconst}.
\begin{proof}[Proof of Theorem \ref{thm:fnonconst}]
Let $M \subset (W,\Omega)$ be a stable hypersurface and $\lambda$ a stabilizing one-form of the Hamiltonian structure $\omega$ induced in $M$. We assume that $\frac{d\lambda}{\omega}$ is constant since otherwise we are done. By Theorem \ref{thm:addintS} the hypersurface $M$ is stable homotopic to some stable hypersurface $M_1$ whose characteristic foliation admits an integrable region. Let $\lambda_1$ be the stabilizing one-form of $\omega_1$ (the Hamiltonian structure induced in $M_1$) obtained by this homotopy. By Lemma \ref{prop:intSHS} there is another stabilizing one-form $\tilde \lambda_1$ of $\omega_1$ such that $\frac{d\tilde \lambda_1}{\omega_1}$ is non-constant. Any two stabilizing one-forms are homotopic, and thus $M_1$ endowed with $(\tilde \lambda_1,\omega_1)$ is homotopic to $M$ endowed with $(\lambda,\omega)$, as we wanted to prove.
\end{proof}

We finish this section by proving that Theorem \ref{thm:main} implies that the set $\mathcal{SR}_\mu(M)$ of stable-like flows preserving a given volume form $\mu$ is not open in $\mathfrak{X}_\mu(M)$.

\begin{corollary}\label{cor:notopen}
    Let $M$ be a closed three-manifold. Then $\mathcal{SR}_\mu (M)$ is not open in $\mathfrak{X}_\mu(M)$.
\end{corollary}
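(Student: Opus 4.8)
The plan is to extract a stable-like vector field from the construction behind Theorem \ref{thm:main} and then perturb it inside its cohomology class. Fix the given volume form $\mu$ on $M$. By the proof of Theorem \ref{thm:Reeb} (i.e. Proposition \ref{prop:SHSht} together with Theorem \ref{thm:SHSdelta}, applied in a symplectization) there is a stable Hamiltonian structure $(\lambda_0,\omega_0)$ on $M$ whose Reeb field $R$ admits a positive and a negative $\partial$-strong partial section with non-degenerate binding orbits. Then $R$ preserves the volume form $\mu_0=\lambda_0\wedge\omega_0$ and satisfies $\iota_R\mu_0=\omega_0$. Writing $\mu=h\mu_0$ with $h>0$ smooth, set $X=\tfrac1hR$; then $\iota_X\mu=\omega_0$, so $X$ is non-vanishing, preserves $\mu$, has $[\iota_X\mu]=[\omega_0]=:\eta\in H^2(M;\R)$, and is a positive multiple of the Reeb field of $(\lambda_0,\omega_0)$ with $\lambda_0(X)>0$, hence $X\in\mathcal{SR}_\mu(M)$. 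Since being Reeb-like, admitting a global cross-section, and admitting a $\partial$-strong partial section with non-degenerate binding orbits are all properties of the oriented characteristic foliation, $X$ inherits from $R$ a positive and a negative $\partial$-strong partial section with non-degenerate binding orbits.

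Next I would make these obstructions $C^1$-robust and then perturb inside $\mathfrak{X}_\mu^\eta(M)$. By Corollary \ref{cor:robobs} (and the remark that it applies to positive multiples of such vector fields), together with Proposition \ref{prop:robpartial} combined with Proposition \ref{prop:obsGCS}, there is a $C^1$-neighborhood $\mathcal{V}$ of $X$ in $\mathfrak{X}(M)$ in which no vector field is Reeb-like and no vector field admits a global cross-section. On the other hand, by Theorem \ref{thm:residualint} and Lemma \ref{lem:intequiv}, the set of vector fields in $\mathfrak{X}_\mu^\eta(M)$ admitting no smooth first integral is residual, hence dense, in $\mathfrak{X}_\mu^\eta(M)$; intersecting with the open neighborhood $\mathcal{V}\cap\mathfrak{X}_\mu^\eta(M)$ of $X$, one gets vector fields $Y$ arbitrarily $C^\infty$-close to $X$ in $\mathfrak{X}_\mu^\eta(M)$ that are neither Reeb-like nor cross-sectionable and admit no smooth first integral. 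Such a $Y$ cannot be stable-like: if it were, some stable Hamiltonian structure $(\lambda',\omega')$ would satisfy $\lambda'(Y)>0$ and $\iota_Y\omega'=0$, and then (Section \ref{ss:SHS}) $f=d\lambda'/\omega'$ is a first integral of $Y$, forcing $f$ constant; $f\equiv 0$ would give $Y$ a global cross-section and $f\equiv c\neq 0$ would make $Y$ Reeb-like, both excluded. Hence $Y\in\mathfrak{X}_\mu(M)\setminus\mathcal{SR}_\mu(M)$.

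We conclude that $X\in\mathcal{SR}_\mu(M)$ is a $C^\infty$-limit of vector fields of $\mathfrak{X}_\mu(M)$ that are not stable-like, and since $\mathfrak{X}_\mu^\eta(M)$ carries the subspace topology from $\mathfrak{X}_\mu(M)$, the point $X$ is not interior to $\mathcal{SR}_\mu(M)$; thus $\mathcal{SR}_\mu(M)$ is not open in $\mathfrak{X}_\mu(M)$. This argument is precisely the vector-field translation of the proof of Theorem \ref{thm:mainsimp}: equivalently, one may run it through Theorem \ref{thm:main} applied to $M\times\{0\}$ in the symplectization $(M\times(-\varepsilon,\varepsilon),\,\omega_0+d(t\lambda_0))$, using Lemma \ref{lem:app} from the appendix to identify hypersurfaces $C^\infty$-close to the resulting $\tilde M$ with $C^\infty$-small cohomologous perturbations of the induced Hamiltonian structure, hence with $C^\infty$-close vector fields in $\mathfrak{X}_\mu^\eta(M)$, and noting that a hypersurface is unstable exactly when the corresponding vector field is not stable-like. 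I do not expect an essential difficulty beyond correctly invoking the earlier results; the only points that need care are the bookkeeping of the cohomology class $\eta$ (so that Theorem \ref{thm:residualint} applies and the perturbations stay in $\mathfrak{X}_\mu^\eta(M)$) and, in the route through Theorem \ref{thm:main}, the correspondence of Lemma \ref{lem:app} between $C^\infty$-close hypersurfaces and cohomologous Hamiltonian structures that allows the residual family of bad hypersurfaces to be transported to a dense family of non-stable-like vector fields near $X$.
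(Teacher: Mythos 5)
Your argument is correct, but it is packaged differently from the paper's own proof of this corollary. The paper deduces Corollary \ref{cor:notopen} in a few lines from Theorem \ref{thm:main}: take a contact form $\alpha$, apply Theorem \ref{thm:main} to the zero section of the symplectization $(M\times(-\varepsilon,\varepsilon),d(t\alpha))$, and convert the resulting stable hypersurface together with its nearby unstable hypersurfaces into a stable-like field $Y$ (via $\iota_Y\mu=\omega$) with arbitrarily $C^\infty$-close non-stable-like fields. You instead re-run the mechanism of Theorem \ref{thm:mainsimp} intrinsically on $M$, at the level of vector fields and Hamiltonian structures: the robust signed $\partial$-strong partial sections coming from the proof of Theorem \ref{thm:Reeb} exclude Reeb-likeness and global cross-sections on a whole $C^1$-neighborhood (Corollary \ref{cor:robobs}, Propositions \ref{prop:robpartial} and \ref{prop:obsGCS}), Theorem \ref{thm:residualint} with Lemma \ref{lem:intequiv} kills smooth first integrals by a $C^\infty$-small perturbation inside $\mathfrak{X}_\mu^\eta(M)$, and the trichotomy for $d\lambda'/\omega'$ then rules out stability. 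This is exactly the ``vector-field translation'' the paper itself sketches in the remark following Theorem \ref{thm:mainsimp}; what your route buys is that it bypasses Theorem \ref{thm:main}, Lemma \ref{lem:app}, and the reduction of Section \ref{ss:addint} (the non-constant proportionality factor comes for free from the construction in the proof of Theorem \ref{thm:Reeb}), while the paper's route is shorter once Theorem \ref{thm:main} is in hand, at the cost of routing through the hypersurface--Hamiltonian-structure dictionary in the symplectization. Two small points to tidy, neither a real gap: the sign of $h$ in $\mu=h\mu_0$ (if $\lambda_0\wedge\omega_0$ and $\mu$ induce opposite orientations, replace $(\lambda_0,\omega_0)$ by $(-\lambda_0,-\omega_0)$, which changes nothing, since signed partial sections and their signs survive $R\mapsto -R$), and ``residual hence dense'' uses that $\mathfrak{X}_\mu^\eta(M)$ is a Baire space, which holds because it is an open subset of an affine Fr\'echet subspace of $\Omega^2(M)$ under $X\mapsto\iota_X\mu$ (the paper relies on the same fact implicitly in the proof of Theorem \ref{thm:mainsimp}).
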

\begin{proof}
    Let $\alpha$ be a contact form in $M$ and $W=M\times (-\varepsilon,\varepsilon)$ the symplectization of $M$, which is hence equipped with the symplectic form $\Omega=d(t\alpha)$. By Theorem \ref{thm:main}, there is an embedding of $M$ in $W$ that is $C^0$-close to the zero section with an induced Hamiltonian structure $\omega$, homotopic to $d\alpha$, that is stabilizable but admits arbitrarily $C^\infty$-close Hamiltonian structures which are not stabilizable. The vector field $Y$ defined by $\iota_Y\mu=\omega$ preserves $\mu$ (because $\omega$ is closed) and is stable-like because $\omega$ admits a stabilizing one-form, and hence $Y\in \mathcal{SR}_\mu(M)$. Choosing an arbitrarily $C^\infty$-close non-stabilizable Hamiltonian structure $\tilde \omega$, the vector field defined by $\iota_X\mu=\tilde \omega$ preserves $\mu$, is not stable-like and is arbitrarily $C^\infty$-close to $Y$. This shows that any neighborhood of $Y$ in $\mathcal{X}_\mu(M)$ admits elements in $\mathcal{X}_\mu(M)\setminus \mathcal{SR}_\mu(M)$, showing that $\mathcal{SR}_\mu(M)$ is not open.
\end{proof}
Notice that the homotopy class of the flow $X$ defined by $\iota_X\mu=d\alpha$ in the proof is arbitrary, as one can find contact structures in any homotopy class of plane fields. Thus, the non-openness of $\mathcal{SR}_\mu(M)$ in $\mathfrak{X}_\mu(M)$ holds in any homotopy class of non-vanishing fields.

\appendix

\section{Cohomologous Hamiltonian structures and embedded hypersurfaces}\label{s:app}

The following Lemma is probably known to the experts, but we add it here for completeness and future reference. It shows that small cohomologous perturbations of the Hamiltonian structure induced on an embedded hypersurface $M$ in a symplectic manifold $(W,\Omega)$ can be obtained as induced Hamiltonian structure on perturbations of $M$.

\begin{lemma}\label{lem:app}
Let $\varphi: M\rightarrow (W,\Omega)$ be a closed embedded hypersurface on a symplectic manifold, and consider the Hamiltonian structure $\omega=\varphi^*\Omega$. Then for any $\gamma \in \Omega^2(M)$ such that $[\gamma]=[\omega]$ in $H^2(M,\mathbb{R})$ and $\gamma$ is sufficiently $C^\infty$-close to $\omega$, there exists an embedding $\tilde \varphi:M\rightarrow (W,\Omega)$ that is $C^{\infty}$ close to $\varphi$ and such that $\tilde \varphi^*\Omega=\gamma$.
\end{lemma}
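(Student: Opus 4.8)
The plan is to realize $\gamma$ as a pullback of $\Omega$ by a small perturbation of $\varphi$ obtained by moving $\varphi(M)$ inside a tubular neighborhood; the whole difficulty is concentrated in showing that the relevant linearized operator is surjective, and this reduces to an elementary fact about the characteristic direction. First I would fix a tubular neighborhood and identify a neighborhood of $\varphi(M)$ in $W$ with $M\times(-\varepsilon,\varepsilon)$ so that $\varphi$ becomes the zero section $\iota_0\colon x\mapsto(x,0)$; I write $\Omega$ also for the induced symplectic form on $M\times(-\varepsilon,\varepsilon)$, so that $\iota_0^*\Omega=\omega$. Let $X$ be a nowhere-vanishing section of $\ker\omega$, the characteristic line field, and set $b:=\iota_0^*\bigl(\iota_{\partial_t}\Omega\bigr)\in\Omega^1(M)$. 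The key observation is that $b(X)$ vanishes nowhere on $M$: since $M$ is coisotropic in $(W,\Omega)$, the hyperplane $\ker(\iota_X\Omega)\subset TW$ contains $TM$ and hence equals it along $M$, so $\iota_X\Omega$ does not annihilate the vector $\partial_t$, which is transverse to $M$; thus $b(X)=\Omega(\partial_t,X)\neq0$ everywhere, and in particular $b$ itself is nowhere zero.

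Next I would introduce, for a diffeomorphism $g$ of $M$ close to the identity and a function $h\in C^\infty(M)$ close to $0$, the embedding $\Phi_{(g,h)}\colon M\to M\times(-\varepsilon,\varepsilon)$, $\Phi_{(g,h)}(x)=(g(x),h(x))$; it is close to $\iota_0$, it is homotopic to $\iota_0$, and since $\Omega$ is closed the closed two-form $\mathcal F(g,h):=\Phi_{(g,h)}^*\Omega$ is cohomologous to $\omega$. Using $d\Omega=0$ and $d\omega=0$, a direct computation gives the linearization at $(\mathrm{id},0)$:
\[
D\mathcal F_{(\mathrm{id},0)}(V,\phi)=d\bigl(\iota_V\omega+\phi\,b\bigr),\qquad V\in\mathfrak X(M),\ \phi\in C^\infty(M).
\]
This is surjective onto the space $d\Omega^1(M)$ of exact two-forms: given $\mu=d\nu$, set $\phi:=\nu(X)/b(X)$, a smooth function since $b(X)$ is nowhere zero; then $\nu-\phi b$ annihilates $X$, hence lies in the image of the surjective bundle map $V\mapsto\iota_V\omega$ from $TM$ onto the annihilator subbundle $\{\alpha:\alpha(X)=0\}$, so $\nu-\phi b=\iota_V\omega$ for some $V\in\mathfrak X(M)$, and $D\mathcal F_{(\mathrm{id},0)}(V,\phi)=d\nu=\mu$. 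Moreover a primitive $\nu$ of $\mu$ can be chosen with a gain of derivatives (take $\nu=d^*G\mu$ via Hodge theory), so $\mu\mapsto(V,\phi)$ is a tame right inverse.

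To conclude, $\mathcal F$ is a tame first-order nonlinear differential operator with $\mathcal F(\mathrm{id},0)=\omega$, taking values in the affine subspace $\omega+d\Omega^1(M)$, whose linearization at $(\mathrm{id},0)$ is surjective with a tame right inverse; by the Nash--Moser implicit function theorem $\mathcal F$ is locally surjective onto $\omega+d\Omega^1(M)$, with a solving pair $(g,h)$ depending continuously on the target and reducing to $(\mathrm{id},0)$ when the target is $\omega$. Applying this to $\gamma$ — which lies in $\omega+d\Omega^1(M)$ near $\omega$ precisely because $[\gamma]=[\omega]$ and $\gamma$ is $C^\infty$-close to $\omega$ — produces $(g,h)$ close to $(\mathrm{id},0)$ with $\Phi_{(g,h)}^*\Omega=\gamma$; composing $\Phi_{(g,h)}$ with the tubular embedding $M\times(-\varepsilon,\varepsilon)\hookrightarrow W$ gives the desired $C^\infty$-close embedding $\tilde\varphi$ with $\tilde\varphi^*\Omega=\gamma$.

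The geometric content — the tubular model plus the nonvanishing of $b(X)$ and the resulting infinitesimal surjectivity — is short, and I expect the main obstacle to be the functional-analytic packaging of the last step: because the pullback operator $\mathcal F$ loses one derivative, one cannot apply the Banach implicit function theorem in a fixed $C^k$ norm, so one must either run a Nash--Moser scheme (checking tameness of $\mathcal F$ and of the right inverse, which is routine since the latter only involves solving $\iota_V\omega=(\,\cdot\,)$ fiberwise together with a Hodge-theoretic choice of primitive), or solve in Hölder spaces $C^{k,\alpha}$ and then bootstrap, using the smoothness of $\Omega$ and $\gamma$, to recover smoothness of $(g,h)$. Everything else is routine.
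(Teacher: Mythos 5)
Your geometric/linearized analysis is correct: the formula $D\mathcal F_{(\mathrm{id},0)}(V,\phi)=d(\iota_V\omega+\phi\,b)$, the nonvanishing of $b(X)$, and the resulting surjectivity onto exact two-forms at the center point are all fine. The genuine gap is that the nonlinear step, which you yourself identify as the main obstacle, is not actually carried out, and neither of your two suggested fixes is as routine as claimed. The ``solve in $C^{k,\alpha}$ and bootstrap'' route fails as stated: the equation $\Phi_{(g,h)}^*\Omega=\gamma$ is a severely underdetermined first-order system (one can compose any solution with any, in general non-smooth, diffeomorphism of $M$ preserving $\gamma$, e.g.\ the time-one map of a merely $C^k$ section of $\ker\gamma$), so there is no ellipticity and a particular H\"older solution produced by a Banach-space surjectivity argument has no reason to be $C^\infty$; you would need to add a gauge-fixing, which you have not done. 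The Nash--Moser route could in principle work, but Hamilton's theorem requires a smooth \emph{tame family} of right inverses for $D\mathcal F_{(g,h)}$ at all $(g,h)$ in a neighborhood of $(\mathrm{id},0)$, together with tame estimates for $\mathcal F$ and for the inverse; you only construct the right inverse at the center and assert the rest is routine. So as written the proof is incomplete precisely where the analytic difficulty lies.

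The paper avoids this machinery entirely by working in the four-dimensional neighborhood rather than in the space of embeddings. One identifies a neighborhood of $\varphi(M)$ with $(M\times[-\varepsilon,\varepsilon],\ \Omega_0=\omega+d(t\lambda))$, writes $\gamma-\omega=d\eta$ with $\eta$ $C^\infty$-small (Hodge theory, as you also use), and deforms the \emph{ambient} form by $\Omega_s=\Omega_0+s\,d(\phi(t)\eta)$ with $\phi$ a cutoff in $t$ equal to $1$ near $t=0$ and $0$ near $t=\pm\varepsilon$. For $\eta$ small these are symplectic, cohomologous, and agree with $\Omega_0$ near the boundary, so the Moser trick applies: the equation $\iota_{X_s}\Omega_s=-\phi(t)\eta$ is solved by pointwise linear algebra (no loss of derivatives, no inversion of $d$), the small vector fields integrate to a $C^\infty$-small compactly supported isotopy with $h_s^*\Omega_s=\Omega_0$, and restricting $g=h_1^{-1}$ to the zero section gives $\tilde\varphi$ with $\tilde\varphi^*\Omega=\iota^*\Omega_1=\omega+d\eta=\gamma$. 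In effect, the paper integrates your linearized solution along a path by making the forms nondegenerate in one dimension higher; if you want to keep your framework, replacing the Nash--Moser step by this thickening-plus-Moser argument is the cleanest way to close the gap.
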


\begin{proof}
Choose some one-form $\lambda$ such that $\lambda\wedge \omega>0$ in $M$. The neighborhood theorem for hypersurfaces in symplectic manifolds says that there is a neighborhood of $\varphi(M)$ symplectomorphic to $U=M\times [-\varepsilon,\varepsilon]$ endowed with the symplectic form
$$ \Omega_0= \omega + d(t\lambda), $$
where $t$ denotes the coordinate in the second factor of $M\times [-\varepsilon,\varepsilon]$. That is, there exists a symplectic embedding $\psi: (U,\Omega_0) \rightarrow (M,\Omega)$ such that $\psi|_{M\times \{0\}}=\varphi$. 

Since $\gamma$ is $C^\infty$-close to $\omega$, the two-form $\gamma-\omega$ admits a $C^\infty$-small primitive $\eta$, this follows from example from \cite[Corollary 2.4.15]{Sch} and the Sobolev embedding theorem. Hence, given $\delta>0$, if $\gamma$ is sufficiently close to $\omega$ then we can assume that
$$ \norm{\eta}_{C^{\infty}}<\delta. $$
 Let $\phi:[-\varepsilon, \varepsilon]$ be a function constantly equal to $1$ near $0$ and equal to $0$ near $\pm \varepsilon$. Consider the family of two-forms
\begin{align*}
\Omega_s &= \Omega_0 + sd(\phi (t) \eta) \\
		&= \Omega_0 + s (\phi'(t)dt\wedge \eta + d\eta).
\end{align*}
The term $\phi'(t)$ is uniformly bounded in the $C^\infty$-topology (only depending on $\varepsilon$). Hence, given $\hat \delta>0$, there exists $\delta_0$ such that if $\delta<\delta_0$ the two-form $\Omega_s$ is non-degenerate for all $s$ and is $\hat \delta$-close to $\Omega_0$ in the $C^{\infty}$-topology. If $\iota: M \rightarrow M\times \{0\}\subset U$ denotes the inclusion of the zero section, observe that $\iota^*\Omega_0=\omega$. On the other hand $\iota^*\Omega_1=\omega + d\eta=\gamma$.\\

We have a path of cohomologous symplectic forms $\Omega_s$ such that $\Omega_s=\Omega_0$ for all $s$ on a neighborhood of $\partial U$. We are ready to apply Moser's path method. Let $X_s$ be the unique time-dependent family of vector fields defined by
\begin{equation}\label{eq:Mos}
\iota_{X_s}\Omega_s=-\phi(t)\eta.
\end{equation}
It follows that $X_s$, for a fixed value of the parameter, is necessarily $C^{\infty}$-small. Integrating $X_s$, we construct an isotopy $h_s: U \rightarrow U$ compactly supported in the interior of $U$ such that $h_s^*\Omega_t=\Omega_0$ and which is $C^{\infty}$-small. Let us denote by $g$ the inverse of $h_1$, which satisfies $g^*\Omega_0=\Omega_1$.

If $\iota:M \rightarrow M\times \{0\}\subset U$ denotes the inclusion of the zero section, we have $\varphi=\psi \circ \iota$ and hence $\varphi^*\Omega=\iota^*\psi^*\Omega=\iota^*\Omega_0=\omega$. Consider the embedding
$$ \tilde \varphi:= \psi \circ g \circ \iota: M\rightarrow (W,\Omega), $$
which satisfies $\tilde \varphi^*\Omega= \iota^*g^*\psi^*\Omega= \iota^*g^*\Omega_0=\iota^*\Omega_1= \gamma$ as we wanted. Notice that the embedding $\tilde \varphi$ is $C^{\infty}$-close to the original embedding.
\end{proof}

{

{\sc \noindent Robert Cardona}

\noindent Departament de Matem\`atiques i Inform\`atica, Universitat de Barcelona, Gran Via de Les Corts Catalanes 585, 08007 Barcelona, Spain. 

{\noindent  \em robert.cardona@ub.edu\/}
}

\end{document}